\newcommand{\func}[1]{\operatorname{#1}}
\newtheorem{theorem}{Theorem}[section]
\newtheorem{corollary}[theorem]{Corollary}
\newtheorem{lemma}[theorem]{Lemma}
\newtheorem{proposition}[theorem]{Proposition}
\newtheorem{remark}[theorem]{Remark}
\newtheorem{example}[theorem]{Example}
\numberwithin{equation}{section}
\begin{document}
\title{The mean curvature of transverse K\"{a}hler foliations}
\author[S.~D.~Jung]{Seoung Dal Jung}
\address{Department of Mathematics\\
Jeju National University \\
Jeju 690-756 \\
Republic of Korea}
\email[S.~D.~Jung]{sdjung@jejunu.ac.kr}
\author[K.~Richardson]{Ken Richardson}
\address{Department of Mathematics \\
Texas Christian University \\
Fort Worth, Texas 76129, USA}
\email[K.~Richardson]{k.richardson@tcu.edu}
\subjclass[2010]{53C12; 53C21; 53C55; 57R30; 58J50}
\keywords{Riemannian foliation, transverse K\"ahler foliation, Lefschetz
decomposition, mean curvature}

\begin{abstract}
We study properties of the mean curvature one-form and its holomorphic and
antiholomorphic cousins on a transverse K\"{a}hler foliation. If the mean
curvature of the foliation is automorphic, then there are some restrictions
on basic cohomology similar to that on K\"ahler manifolds, such as the
requirement that the odd basic Betti numbers must be even. However, the full
Hodge diamond structure does not apply to basic Dolbeault cohomology unless
the foliation is taut.
\end{abstract}

\maketitle

\renewcommand{\thefootnote}{} \footnote{%
This paper was supported by the National Research Foundation of Korea (NRF)
grant funded by the Korea government (MSIP) (NRF-2018R1A2B2002046). This
work was also supported by a grant from the Simons Foundation (Grant Number
245818 to Ken Richardson).} \renewcommand{\thefootnote}{\arabic{footnote}} %
\setcounter{footnote}{0}

\section{Introduction}

Let $\mathcal{F}$ be a foliation on a closed, smooth manifold $M$. A
Riemannian foliation is a foliation such that the normal bundle $Q=TM/T%
\mathcal{F}$ is endowed with a holonomy-invariant metric $g_{Q}$. This
metric can always be extended to a metric $g$ on $M$ that is called
bundle-like, characterized by the property that the leaves of $\mathcal{F}$
are locally equidistant. The basic forms of $(M,\mathcal{F})$ are locally
forms on the leaf space; that is, they are forms $\phi $ satisfying $%
X\lrcorner \phi =X\lrcorner d\phi =0$ for any vector $X$ tangent to the
leaves, where $X\lrcorner $ denotes the interior product with $X$. The set
of basic forms forms a differential complex and is used to define basic de
Rham cohomology groups $H_{B}^{\ast }(\mathcal{F})$. For Riemannian
foliations, these groups have finite rank, and their ranks are topological
invariants (\cite{EKN}). The basic Laplacian $\Delta _{B}$ is a version of
the Laplace operator that preserves the basic forms. Many researchers have
studied basic forms and the basic Laplacian on Riemannian foliations. It is
well-known (\cite{Al}, \cite{EKH}, \cite{KT3}, \cite{PaRi}) that on a closed
oriented manifold $M$ with a transversely oriented Riemannian foliation $%
\mathcal{F}$, $H_{B}^{r}(\mathcal{F})\cong \mathcal{H}_{B}^{r}(\mathcal{F})=%
\mathrm{ker}\Delta _{B}^{r}$.

The basic component $\kappa _{B}$ of the mean curvature one-form of the
foliation is always closed, and its cohomology class $\xi =\left[ \kappa _{B}%
\right] \in H_{B}^{1}\left( \mathcal{F}\right) $ is invariant of the choice
of bundle-like metric; this was proved by in \cite{Al}, and $\xi $ is called
the \'{A}lvarez class. Poincar\'{e} duality holds for the basic cohomology
of a Riemannian foliation $(M,\mathcal{F},g_{Q})$ if and only if the \'{A}%
lvarez class is trivial, if and only if $\left( M,\mathcal{F}\right) $ is
taut, meaning that there exists a metric for which the leaves of the
foliation are (immersed) minimal submanifolds.

In this paper, we consider foliations that admit a transverse,
holonomy-invariant complex structure, and in particular we consider
holonomy-invariant Hermitian metrics on $Q$ that may or may not be K\"{a}%
hler. The question is whether the standard K\"{a}hler manifold structures on
Dolbeault cohomology such as the Hard Lefschetz Theorem, the $dd_{c}$-Lemma,
and formality apply to the basic cohomology of transverse K\"{a}hler
foliations. The basic Dolbeault cohomologies $H_{B}^{r,s}\left( \mathcal{F}%
\right) $ and $H_{\partial _{B}\overline{\partial }_{B}}^{r,s}\left( 
\mathcal{F}\right) $ can be defined as usual using only the transverse
holomorphic structure. The Hodge diamond structure does sometimes occur for
basic Dolbeault cohomology for K\"{a}hler foliations, but it turns out that
two properties of the mean curvature are crucial:

\begin{enumerate}
\item Does the class $\eta =\left[ \partial _{B}\kappa _{B}^{0,1}\right] \in
H_{\partial _{B}\overline{\partial }_{B}}^{1,1}\left( \mathcal{F}\right) $
vanish? This class automatically vanishes if the \'{A}lvarez class $\xi $
vanishes, but it is possible for the $\xi \neq 0$ while $\eta =0$ (see
Example \ref{ExampleNewClassZeroButNotTaut}). The class is nontrivial if and
only if the $\partial _{B}\overline{\partial }_{B}$-Lemma fails to hold when
applied directly to $\partial _{B}\kappa _{B}^{0,1}$. The class $\eta $ is
an invariant of the transverse complex structure (Theorem \ref%
{newClassDefinedTheorem}).

\item Is the mean curvature $H=\kappa _{B}^{\#}$ automorphic (that is, does
its flow preserve the transverse complex structure)?
\end{enumerate}

The condition (2) is equivalent to $(\kappa _{B}^{0,1})^{\#}=H^{1,0}$ being
a transverse holomorphic vector field --- that locally it has the form 
\begin{equation*}
H^{1,0}=\sum_j H^{1,0}_j(z)\partial_{z_j}
\end{equation*}
in the transverse holomorphic coordinates, with each $H^{1,0}_j(z)$ being a
transverse holomorphic function. Condition (1) is similar; only in that case
each $H^{1,0}_j(z)$ is required to be a transverse antiholomorphic function.

\vspace{1pt}This paper is organized as follows. In Section \ref%
{meanCurvRiemFolsSection}, we review the known properties of the mean
curvature and basic Laplacian for Riemannian foliations. In Section \ref%
{transHermSection}, we investigate transverse Hermitian structures on
foliations with bundle-like metrics. In Proposition \ref%
{holomorphicKappaClosedProp} and Theorem \ref{holomorphicAlvarezClass}, we
show that the holomorphic and antiholomorphic basic components $\kappa
_{B}^{1,0}$ and $\kappa _{B}^{0,1}$ of mean curvature are $\partial _{B}$%
-closed, $\overline{\partial }_{B}$-closed and represent basic Dolbeault
cohomology classes in $H_{\partial _{B}}^{1,0}\left( \mathcal{F}\right) $
and $H_{\overline{\partial }_{B}}^{0,1}\left( \mathcal{F}\right) $,
respectively, that are invariant under the choices of bundle-like metric and
transverse metric that is compatible with a given transverse holomorphic
structure. In fact, in Proposition \ref{DomGen}, we show that the metrics
can be chosen so that $\kappa $, $\kappa ^{1,0}$, and $\kappa ^{0,1}$ are
basic forms. When the foliation is transversally K\"{a}hler, the metrics can
be chosen so that $\kappa $, $\kappa ^{1,0}$, and $\kappa ^{0,1}$ are
simultaneously basic, $\Delta _{B}$-harmonic, $\square _{B}$-harmonic, and $%
\overline{\square }_{B}$-harmonic, respectively (see Proposition \ref%
{harmonicMeanCurvKahlerThm}; the $\square _{B}$ and $\overline{\square }_{B}$
are the $\partial _{B}$ and $\overline{\partial }_{B}$ Laplacians,
respectively).

In Section \ref{newClassSection}, we show that for transverse Hermitian
foliations, the form $\partial _{B}\kappa _{B}^{0,1}$ is $\partial _{B}%
\overline{\partial }_{B}$-closed and generates a class $\eta =\left[
\partial _{B}\kappa _{B}^{0,1}\right] \in H_{\partial _{B}\overline{\partial 
}_{B}}^{1,1}\left( \mathcal{F}\right) $ that is invariant of the choices of
bundle-like metric and compatible transverse metric (Theorem \ref%
{newClassDefinedTheorem}). If the foliation is not taut and is transversally
Hermitian, Proposition \ref{nontautAndCLassZeroImpliesH1gt1Prop} implies
that if $\partial _{B}\kappa _{B}^{0,1}=0$, then $\left[ \kappa _{B}\right] $
and $\left[ J\kappa _{B}\right] $ are linearly independent cohomology
classes in $H_{B}^{1}\left( \mathcal{F}\right) $, so that $\dim
H_{B}^{1}\left( \mathcal{F}\right) \geq 2$ in this case. In Proposition \ref%
{FormulasForBoxB}, we derive formulas for $\square _{B}$ and $\overline{%
\square }_{B}$ that are valid for all transverse Hermitian foliations; for
example,%
\begin{equation*}
\overline{\square }_{B}=\Delta _{\bar{\partial}}^{Q}+\bar{\partial}_{B}\circ
H^{0,1}\lrcorner +H^{0,1}\lrcorner \circ \bar{\partial}_{B},
\end{equation*}%
where $H^{0,1}=\left( \kappa _{B}^{1,0}\right) ^{\#}$ and where $\Delta _{%
\bar{\partial}}^{Q}$ is the Dolbeault Laplacian on the local quotients of
foliation charts. In Corollary \ref{FormulasForBoxB}, we show for example
that if the foliation is transversely K\"{a}hler, then $\partial _{B}\kappa
_{B}^{0,1}=0$ if and only if $\overline{\square }_{B}=\Delta _{\bar{\partial}%
}^{Q}+\nabla _{H^{0,1}}$.

In Section \ref{LefschetzDecompSection}, we investigate the properties of
the operator $L$, exterior product with the transverse K\"{a}hler form. As a
result, we show in Lemma \ref{LaplacianKahlerFormLemma} that for transverse K%
\"{a}hler foliations,%
\begin{equation*}
\Delta _{B}=\square _{B}+\overline{\square }_{B}+\func{Re}\left( \partial
_{B}H^{0,1}\lrcorner +H^{0,1}\lrcorner \,\partial _{B}\right) .
\end{equation*}%
As a consequence (Corollary \ref{weakInequalityCorollary}),%
\begin{equation*}
\dim \left( \mathcal{H}_{B}^{j}\left( \mathcal{F}\right) \cap \Omega
_{B}^{r,s}\left( \mathcal{F}\right) \right) \leq \dim H_{B}^{r,s}\left( 
\mathcal{F}\right)
\end{equation*}%
for all transverse K\"{a}hler foliations. Then it turns out that the Hard
Lefschetz Theorem for basic cohomology holds if and only if the class $\eta
=0$ (Theorem \ref{HardLefschetzTheorem} and Corollary \ref%
{classTrivialImpliesAlvClassTrivialCor}). Further, for transverse K\"{a}hler
foliations, $\eta =0$ if and only if $\xi =0$, which is false in general,
and this condition in turn implies that the metric can be chosen so that $%
\kappa =\kappa _{B}=0$.

\vspace{1pt}In Section \ref{automorphMeanCurvSection}, we investigate the
condition that the mean curvature $\kappa _{B}$ is automorphic, meaning its
flow preserves the transverse holomorphic structure. The basic Laplacian
satisfies $\Delta _{B}=\square _{B}+\overline{\square }_{B}$ if and only if
it preserves the $\left( r,s\right) $-type of form if and only if the mean
curvature is automorphic (Theorem \ref{automorphicMeanCurvTheorem} and
Corollary \ref{automorphicIFFpreserveTypeOfFormCor}).

In Section \ref{ddcSection}, we show that the $dd_{c}$-Lemma of K\"{a}hler
manifolds works only for taut K\"{a}hler foliations (Lemma \ref{ddcLemma}).
In Section \ref{HodgeDiamondSection}, Theorem \ref{HodgeDiamondTheorem}
shows on transverse K\"{a}hler foliations that if the mean curvature is
automorphic, then symmetry of a version of the Hodge diamond follows, and
then we also have%
\begin{equation*}
\dim H_{B}^{j}\left( \mathcal{F}\right) =\sum\limits_{r+s=j;~r,s\geq 0}\dim 
\mathcal{H}_{\Delta _{B}}^{r,s}\left( \mathcal{F}\right) .
\end{equation*}%
However, the full power of the Hodge diamond with restrictions to basic
Dolbeault cohomology follows from the Hard Lefschetz theorem, which applies
only if the foliation is taut.

In Section \ref{examplesSection}, we provide examples of nontaut K\"{a}hler
foliations and calculate their cohomologies. Also in this section, we show
that for a nontaut foliation, it is possible for one transverse Hermitian
structure to be K\"{a}hler with $\eta \neq 0$ and mean curvature not
automorphic and for another transverse Hermitian structure to be nonK\"{a}%
hler with $\eta =0$ and mean curvature automorphic. These examples manifest
another interesting property of nontaut transverse K\"{a}hler foliations;
the K\"{a}hler form $\omega $ always yields a transverse volume form $\omega
^{n}$ that is exact, and the K\"{a}hler form itself may be exact.

Foliations that admit a transverse K\"{a}hler structure have been studied by
many researchers, but primarily in the case when the foliation is taut ($%
\kappa =0$ for some metric). For example, Sasaki manifolds are not K\"{a}%
hler but admit transverse K\"{a}hler structures on the characteristic
foliation, which is homologically oriented. Since the mean curvature
vanishes, many K\"{a}hler manifold facts apply to the basic Dolbeault
cohomology (see \cite[Section 2]{BGN}, \cite[Proposition 7.2.3]{BG}, \cite%
{Wo}). The authors in \cite{CMNY} prove the hard Lefschetz theorem for
compact Sasaki manifolds, which again is a simple case of the results of
this paper with $\kappa =0$. The cosymplectic manifold case is treated in 
\cite{ChdLM}. A. El Kacimi proved in \cite[Section 3.4]{EK} that the
standard facts about K\"{a}hler manifolds and their cohomology carry over to
basic cohomology in the homologically orientable (taut) case. Also, L. A.
Cordero and R. A. Wolak \cite{CW} studied basic cohomology on taut
transverse K\"{a}hler foliations by using the differential operator $\Delta
_{T}$, which is different from $\Delta _{B}$ ($\mathcal{F}$ is minimal if
and only if $\Delta _{T}=\Delta _{B}$). We note other recent work on
transverse K\"{a}hler foliations in \cite{JP2}, \cite{JL}, \cite{HV}, \cite%
{KLW}, \cite{JR2}.

\section{Properties of the mean curvature for Riemannian foliations\label%
{meanCurvRiemFolsSection}}

\vspace{0in}Let $(M,g_{Q},\mathcal{F})$ be a $(p+q)$-dimensional Riemannian
foliation of codimension $q$ with compact leaf closures. Here, $g_{Q}$ is a
holonomy invariant metric on the normal bundle $Q=TM/T\mathcal{F}$, meaning
that $\mathcal{L}_{X}g_{Q}=0$ for all $X\in T\mathcal{F}$, where $\mathcal{L}%
_{X}$ denotes the Lie derivative with respect to $X$. Next, let $g_{M}$ be a
bundle-like metric on $M$ adapted to $g_{Q}$. This means that if $T\mathcal{F%
}^{\perp }$ is the $g_{M}$-orthogonal complement to $T\mathcal{F}$ in $TM$
and $\sigma :Q\rightarrow T\mathcal{F}^{\perp }$ is the canonical bundle
isomorphism, then $g_{Q}=\sigma ^{\ast }\left( \left. g_{M}\right\vert _{T%
\mathcal{F}^{\perp }}\right) $. We do not assume that $M$ is compact, but we
assume it is complete with finite volume.

In this section, we review some known results for this Riemannian foliation
setting. Let $\nabla$ be the transverse Levi-Civita connection on the normal
bundle $Q$, which is torsion-free and metric with respect to $g_Q$ \cite{TO2}%
. 
Let $R^Q$ and $\mathrm{Ric}^{Q}$ be the curvature tensor and the transversal
Ricci operator of $\mathcal{F}$ with respect to $\nabla$, respectively. The
mean curvature vector $\tau $ of $\mathcal{F}$ is given by 
\begin{equation}
\tau= \sum_{i=1}^{p}\pi (\nabla _{f_{i}}^{M}f_{i}),
\end{equation}%
where $\{f_{i}\}_{i=1,\cdots ,p}$ is a local orthonormal basis of $T\mathcal{%
F}$ and $\pi:TM\to Q$ is natural projection. Then the \textit{mean curvature
form} $\kappa$ is defined by 
\begin{equation}
\kappa(X) = g_Q (\tau,\pi(X))
\end{equation}
for any tangent vector $X\in \Gamma (TM)$. An $r$-form $\phi \ $is basic if
and only if $X\lrcorner \,\phi =0$ and $\mathcal{L}_{X}\phi =0$ for any $%
X\in \Gamma \left( T\mathcal{F}\right) $, where $X\lrcorner $ denotes the
interior product. Let $\Omega _{B}^{r}(\mathcal{F})$ be the space of all 
\textit{basic }$r $ -\textit{forms}. 
The foliation $\mathcal{F}$ is said to be \textit{minimal} if $\kappa =0$.
We note that Rummler's formula (from \cite{Rum}) for the mean curvature is 
\begin{equation}
d\chi _{\mathcal{F}}=-\kappa \wedge \chi _{\mathcal{F}}+\varphi _{0}\text{
with }\chi _{\mathcal{F}}\wedge \ast \varphi _{0}=0,  \label{Rummler}
\end{equation}%
where $\chi _{\mathcal{F}}:=f_{1}^{\flat }\wedge ...\wedge f_{p}^{\flat }$
is the \emph{characteristic form}, the leafwise volume form, and $\ast $ is
the Hodge star operator associated to $g_{M}$; we assumed $M$ is oriented to
make the property of $\varphi _{0}$ easier to state.

The exterior derivative $d$ maps $\Omega _{B}^{r}(\mathcal{F})$ to $\Omega
_{B}^{r+1}(\mathcal{F})$, and the resulting cohomology groups are called the
basic cohomology groups: for $r\geq 0$,%
\begin{equation*}
H_{B}^{r}\left( \mathcal{F}\right) =\frac{\ker \left( \left. d\right\vert
_{\Omega _{B}^{r}(\mathcal{F})}\right) }{\func{Im}\left( \left. d\right\vert
_{\Omega _{B}^{r-1}(\mathcal{F})}\right) }.
\end{equation*}%
These groups are smooth invariants of the foliation and do not depend on the
bundle-like metric and also do not even depend on the smooth foliation
structure (see \cite{EKN}).

The metric $g_{M}$ induces a natural metric on $\Lambda ^{\ast }T^{\ast }M$
and $L^{2}$ metric on $L^{2}\Omega ^{\ast }\left( M\right) $. Let $%
L^{2}\Omega _{B}^{\ast }(\mathcal{F})$ denote the closure of $\Omega
_{B,0}^{\ast }(\mathcal{F})$, the space of compactly supported basic forms,
in $L^{2}\Omega ^{\ast }\left( M\right) $.

\begin{proposition}
(Proved in \cite{PaRi} for the closed manifold case) Let \vspace{0in}$%
(M,g_{Q},\mathcal{F})$ be a Riemannian foliation with compact leaf closures
and bundle-like metric. The orthogonal projection $P:L^{2}\Omega ^{\ast
}\left( M\right) \rightarrow L^{2}\Omega _{B}^{\ast }\left( \mathcal{F}%
\right) $ maps smooth forms to smooth basic forms. For all $\alpha \in
L^{2}\Omega ^{\ast }\left( M\right) $, $P\left( \alpha \right) \left(
x\right) $ is computed by an integral over the leaf closure containing $x$
and only depends on the values of $\alpha $ on that leaf closure.
\end{proposition}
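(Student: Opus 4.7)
The plan is to reduce the statement to the closed-manifold version proved in \cite{PaRi} by exploiting the fact that the construction of $P$ is essentially local on saturated neighborhoods of leaf closures. The hypotheses that $M$ is compact in \cite{PaRi} are used only to guarantee that the leaf closures are compact and that the ambient $L^{2}$ theory is well-behaved; both survive under the present weaker assumption that $(M,\mathcal{F})$ has compact leaf closures and $M$ has finite volume.

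First I would invoke Molino's structure theorem: around any leaf closure $\bar{L}_{x}$ there exists a saturated tubular neighborhood $U$ in which the partition by leaf closures is a singular Riemannian fibration onto a local quotient, induced after passing to the transverse orthonormal frame bundle by a transversely parallelizable foliation whose closures are the fibers of an honest locally trivial fibration. The sheaf of transverse Killing fields whose orbits are the leaf closures determines a canonical (Haar) probability measure $\mu_{y}$ on each $\bar{L}_{y}$. I would then define
\begin{equation*}
P(\alpha)(x)=\int_{\bar{L}_{x}}\alpha\,d\mu_{x},
\end{equation*}
where the integrand is interpreted via the horizontal lift along $T\mathcal{F}^{\perp}$ in the Molino fibration. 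By construction, $P(\alpha)(x)$ depends only on $\alpha|_{\bar{L}_{x}}$, and $P(\alpha)$ is invariant under flows of vector fields tangent to leaf closures, so in particular $\iota_{X}P(\alpha)=0$ and $\mathcal{L}_{X}P(\alpha)=0$ for every $X\in\Gamma(T\mathcal{F})$; hence $P(\alpha)$ is basic. Smoothness follows because, in Molino coordinates, $\bar{L}_{y}$ varies smoothly with $y$ and integration along a smooth family of compact fibers sends smooth forms to smooth forms --- this is the same verification as in \cite{PaRi}, and it is strictly local, so non-compactness of $M$ plays no role.

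Second, I would check the $L^{2}$ statement. A Fubini decomposition along leaf closures (using the smoothness of $\mu_{y}$ in $y$ on each Molino stratum) together with Jensen's inequality gives $\|P\alpha\|_{L^{2}(M)}\leq\|\alpha\|_{L^{2}(M)}$; finite volume of $M$ is used here to ensure that the pointwise averages assemble into an $L^{2}$ form. Self-adjointness and idempotency of $P$ on $L^{2}\Omega^{*}(M)$ follow from the invariance of the measures $\mu_{y}$ and the Fubini identity, and $P\alpha=\alpha$ iff $\alpha$ is already invariant under leaf-closure flows, i.e.\ basic. Thus $P$ is the $L^{2}$-orthogonal projection onto $L^{2}\Omega_{B}^{*}(\mathcal{F})$.

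The main obstacle is the smoothness claim across the Molino stratification of $M$ by leaf-closure dimension: at a boundary between strata the fibration structure degenerates, and one must show that the family $\{\mu_{y}\}$ depends smoothly --- not merely continuously --- on $y$. This is handled exactly as in \cite{PaRi} via the locally constant sheaf of structural Lie algebras and the Haar measures on the associated transverse Lie groups, an argument that is purely local in nature and therefore transfers intact from the closed to the complete finite-volume setting.
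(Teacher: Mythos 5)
Your proposal is correct and takes essentially the same route as the paper: the paper's entire proof is the observation that the Park--Richardson argument \cite{PaRi} is local around (compact) leaf closures and therefore survives dropping compactness of $M$, and your write-up simply unpacks that argument (Molino lift to the transverse frame bundle, canonical invariant measures on leaf closures, fiber integration, and the structural-sheaf argument for smoothness across strata) while verifying the three points the reduction actually needs --- saturated neighborhoods, compactness of leaf closures in place of compactness of $M$, and finite volume for the $L^{2}$ estimates. Nothing in your sketch deviates from the cited construction, so it is a faithful expansion of the paper's one-line proof rather than a different proof.
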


\begin{proof}
The proof in \cite{PaRi} applies in this slightly more general case, where
it is not assumed that $M$ is compact.
\end{proof}

Now we recall the transversal star operator $\bar{\ast}:\Omega _{B}^{r}(%
\mathcal{F})\rightarrow \Omega _{B}^{q-r}(\mathcal{F})$ given by 
\begin{equation*}
\bar{\ast}\phi =(-1)^{p(q-r)}\ast (\phi \wedge \chi _{\mathcal{F}}),
\end{equation*}%
where $\ast $ is the Hodge star operator associated to $g_{M}$; this is
actually well-defined as long as $(M,\mathcal{F})$ is transversely oriented.
Trivially, $\bar{\ast}^{2}\phi =(-1)^{r(q-r)}\phi $ for any $\phi \in \Omega
_{B}^{r}(\mathcal{F})$. Let $\nu $ be the transversal volume form; that is, $%
\ast \nu =\chi _{\mathcal{F}}$ as long as $M$ is oriented. Then the
pointwise inner product $\langle \cdot ,\cdot \rangle $ on $\Lambda
^{r}Q^{\ast }$ is defined by $\langle \phi ,\psi \rangle \nu =\phi \wedge 
\bar{\ast}\psi $. The global inner product on $L^{2}\Omega _{B}^{r}(\mathcal{%
F})$ is 
\begin{equation*}
\ll \phi ,\psi \gg \, =\int_{M}\langle \phi ,\psi \rangle \mu
_{M}=\int_{M}\phi \wedge \bar{\ast}\psi \wedge \chi _{\mathcal{F}},
\end{equation*}%
where $\mu _{M}=\nu \wedge \chi _{\mathcal{F}}$ is the volume form with
respect to $g_{M}$.

In what follows, let $\kappa _{B}=P\kappa $. Also, let 
\begin{equation}
d_{B}=d|_{\Omega _{B}^{\ast }(\mathcal{F})},~~d_{T}=d_{B}-\epsilon (\kappa
_{B}),  \label{dTdBFormulas}
\end{equation}%
where $\epsilon (\alpha )\psi =\alpha \wedge \psi $ for any $\alpha \in
\Omega _{B}^{1}(\mathcal{F})$.  
The interior product $v\lrcorner $ of $v\in Q\cong T\mathcal{F}^{\bot }$ on
differential forms satisfies%
\begin{equation*}
v\lrcorner =\epsilon \left( v^{\flat }\right) ^{\ast },
\end{equation*}%
where $\ast $ denotes the pointwise adjoint.

\begin{proposition}
\label{basicAdjointFormulasProp}(In \cite{Al}, \cite{PaRi} for the compact
case; \cite[Prop. 2.1]{JR2}) Let \vspace{0in}$(M,g_{Q},\mathcal{F})$ be a
Riemannian foliation with compact leaf closures and bundle-like metric. The
formal adjoint operators $\delta _{B}$ and $\delta _{T}$ of $d_{B}$ and $%
d_{T}$ with respect to $\ll \cdot ,\cdot \gg $ on basic forms are given by 
\begin{equation*}
\delta _{B}\phi =(-1)^{q(r+1)+1}\bar{\ast}d_{T}\bar{\ast}\phi =\left( \delta
_{T}+\kappa _{B}^{\sharp }\lrcorner \,\right) \phi ,\quad \delta _{T}\phi
=(-1)^{q(r+1)+1}\bar{\ast}d_{B}\bar{\ast}\phi ,
\end{equation*}%
on basic $r$-forms $\phi $.
\end{proposition}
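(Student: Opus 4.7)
The approach is integration by parts, combining Stokes' theorem with Rummler's formula (\ref{Rummler}) for $d\chi _{\mathcal{F}}$. For $\alpha \in \Omega _{B,0}^{r-1}(\mathcal{F})$ and $\phi \in \Omega _{B}^{r}(\mathcal{F})$, with at least one compactly supported, I would compute $\ll d_{B}\alpha ,\phi \gg \,=\int_{M}d\alpha \wedge \bar{\ast}\phi \wedge \chi _{\mathcal{F}}$ by expanding $d(\alpha \wedge \bar{\ast}\phi \wedge \chi _{\mathcal{F}})$ via the Leibniz rule and applying Stokes' theorem, so that the integral of the total derivative vanishes. Density of $\Omega _{B,0}^{\ast }(\mathcal{F})$ in $L^{2}\Omega _{B}^{\ast }(\mathcal{F})$ then extends the adjoint formula to general forms.

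The Leibniz expansion produces two terms besides $d\alpha \wedge \bar{\ast}\phi \wedge \chi _{\mathcal{F}}$, one containing $d\bar{\ast}\phi $ and one containing $d\chi _{\mathcal{F}}$. Substituting $d\chi _{\mathcal{F}}=-\kappa \wedge \chi _{\mathcal{F}}+\varphi _{0}$ via Rummler, the $\varphi _{0}$ contribution drops out by a transverse degree count: $\alpha \wedge \bar{\ast}\phi $ is basic of degree $q-1$ and hence carries $q-1$ transverse $Q^{\ast }$ legs, while the orthogonality $\chi _{\mathcal{F}}\wedge \ast \varphi _{0}=0$ together with the already-extracted $-\kappa \wedge \chi _{\mathcal{F}}$ piece forces $\varphi _{0}$ to carry at least two transverse legs; the triple wedge therefore has more than $q$ transverse legs and so vanishes.

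What remains is to replace $\kappa $ by $\kappa _{B}=P\kappa $ in the integral $\int \alpha \wedge \kappa \wedge \bar{\ast}\phi \wedge \chi _{\mathcal{F}}$. Using the definition $\bar{\ast}\beta =(-1)^{p(q-r)}\ast (\beta \wedge \chi _{\mathcal{F}})$ applied to the basic $(q-1)$-form $\beta =\alpha \wedge \bar{\ast}\phi $, one rewrites this, up to signs, as the $L^{2}(M)$ pairing $\ll \kappa ,\eta \gg _{M}$ for a basic $1$-form $\eta $ built from $\alpha $ and $\bar{\ast}\phi $. The orthogonal projection property in the preceding proposition then yields $\ll \kappa ,\eta \gg _{M}=\ll \kappa _{B},\eta \gg _{M}$, justifying the replacement. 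Tracking signs through $\bar{\ast}^{2}=(-1)^{r(q-r)}\mathrm{id}$ on degree-$r$ forms then delivers $\delta _{B}\phi =(-1)^{q(r+1)+1}\bar{\ast}d_{T}\bar{\ast}\phi $.

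The equivalent expression $\delta _{B}=\delta _{T}+\kappa _{B}^{\sharp }\lrcorner $ follows from the definition $d_{T}=d_{B}-\epsilon (\kappa _{B})$ together with the pointwise adjoint $\epsilon (\kappa _{B})^{\ast }=\kappa _{B}^{\sharp }\lrcorner $ already quoted in the excerpt; the formula for $\delta _{T}$ itself comes out of the same Stokes computation applied to $\ll d_{T}\alpha ,\phi \gg $, where the Rummler correction cancels exactly against the $-\epsilon (\kappa _{B})$ piece of $d_{T}$, leaving only the $\bar{\ast}d_{B}\bar{\ast}$ contribution. The principal obstacle is the careful bookkeeping of signs in the $\bar{\ast}$ manipulations and the replacement $\kappa \to \kappa _{B}$, the latter genuinely depending on the preceding proposition; noncompactness of $M$ causes no difficulty because our test forms are compactly supported, and the full $L^{2}$ statement follows by density.
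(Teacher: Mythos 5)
Your proposal is correct, and it reconstructs precisely the argument the paper relies on: the paper gives no proof of this proposition, citing \cite{Al}, \cite{PaRi}, and \cite[Prop. 2.1]{JR2}, and those sources prove it exactly by your route --- Stokes' theorem plus Rummler's formula (\ref{Rummler}), with the $\varphi _{0}$ term killed by a transverse degree count (it has at least two $Q^{\ast }$ legs while $\alpha \wedge \bar{\ast}\phi $ already has $q-1$) and the replacement $\kappa \to \kappa _{B}=P\kappa $ justified by the orthogonal projection proposition, after which $\delta _{B}=\delta _{T}+\kappa _{B}^{\sharp }\lrcorner $ follows from $d_{T}=d_{B}-\epsilon (\kappa _{B})$ and $\epsilon (\kappa _{B})^{\ast }=\kappa _{B}^{\sharp }\lrcorner $. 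The only point worth flagging is that the $\bar{\ast}$ formalism tacitly assumes transverse orientability, an assumption the paper itself makes when defining $\bar{\ast}$; your handling of the noncompact case via compactly supported test forms and density in $L^{2}\Omega _{B}^{\ast }(\mathcal{F})$ matches how the paper extends the compact-case references.
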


\begin{lemma}
\label{divergenceLemma}The transversal divergence satisfies%
\begin{equation*}
\delta _{T}=-\sum_{a=1}^{q}\left( E_{a}\lrcorner \right) \nabla _{E_{a}},
\end{equation*}%
where the sum is over a local orthonormal frame $\left\{ E_{a}\right\} $ of $%
Q$.
\end{lemma}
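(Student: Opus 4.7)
The plan is to derive this formula as the transverse analogue of the classical Riemannian identity $\delta = -\sum_{i} e_{i}\lrcorner \nabla^{TM}_{e_{i}}$ for the Hodge codifferential. The transverse Levi-Civita connection $\nabla$ on $(Q,g_{Q})$ is metric and torsion-free, so it plays the role of a Levi-Civita connection on the ``transverse manifold,'' while $\bar{\ast}$ plays the role of the Hodge star. Since Proposition \ref{basicAdjointFormulasProp} already gives $\delta_{T}=(-1)^{q(r+1)+1}\bar{\ast}\,d_{B}\,\bar{\ast}$, the identity should follow by mimicking the classical Riemannian proof in this transverse framework.

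First I would establish the transverse Cartan-style identity $d_{B}\phi =\sum_{a} E_{a}^{\flat}\wedge \nabla_{E_{a}}\phi$ on basic $r$-forms $\phi$. This comes from the Cartan formula for $d\phi$ on $M$, the vanishing of basic forms on leafwise vectors (so only transverse evaluations contribute), and torsion-freeness in the form $\nabla_{X}\pi(Y)-\nabla_{Y}\pi(X)=\pi([X,Y])$; as a byproduct one checks that the right-hand side is again basic. Next I would substitute this identity, applied to $\bar{\ast}\phi$, into $\bar{\ast}\,d_{B}\,\bar{\ast}\phi$. Because $\nabla$ is a metric connection and $\bar{\ast}$ is built pointwise from $g_{Q}$, they commute: $\nabla_{E_{a}}\bar{\ast}\phi=\bar{\ast}\nabla_{E_{a}}\phi$. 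Finally, the pointwise duality $\bar{\ast}(E_{a}^{\flat}\wedge \bar{\ast}\omega)=\pm\,E_{a}\lrcorner\,\omega$, a direct transverse analogue of $\ast(e^{\flat}\wedge\ast\omega)=\pm e\lrcorner\omega$, converts each wedge into an interior product and yields the claimed formula.

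The main obstacle will be sign bookkeeping: one must combine the overall factor $(-1)^{q(r+1)+1}$ from Proposition \ref{basicAdjointFormulasProp}, the degree shift $r\mapsto q-r$ induced by the first $\bar{\ast}$, and the pointwise sign in the $E_{a}\lrcorner$ duality, and verify that these collapse to the single $-1$ in front, independent of $r$. The differential-geometric content is otherwise routine: nothing in the argument uses tautness, the mean curvature, or any Hermitian or K\"{a}hler structure, as it reflects only that $\nabla$ is a torsion-free metric connection on the transverse bundle $Q$.
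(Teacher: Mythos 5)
Your proposal is correct, but it takes a genuinely different route from the paper. The paper's proof is a one-liner: it observes that $\delta_{T}$ is locally the pullback of the ordinary divergence (codifferential) on the local quotient manifolds of the foliation charts, and then implicitly quotes the classical Riemannian identity $\delta=-\sum_{i}e_{i}\lrcorner\nabla_{e_{i}}$ there. You instead reprove that classical identity directly in the transverse calculus: starting from $\delta_{T}=(-1)^{q(r+1)+1}\bar{\ast}\,d_{B}\,\bar{\ast}$ (Proposition \ref{basicAdjointFormulasProp}), establishing the Cartan-type identity $d_{B}=\sum_{a}\epsilon(\theta^{a})\nabla_{E_{a}}$ from torsion-freeness, using parallelism of $\bar{\ast}$ (which holds because $\nabla g_{Q}=0$ and the transverse volume form is parallel), and finishing with the pointwise duality. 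Your sign worry resolves cleanly: on $r$-forms in codimension $q$ one has $\bar{\ast}\left(\theta^{a}\wedge\bar{\ast}\,\omega\right)=(-1)^{q(r+1)}E_{a}\lrcorner\,\omega$, so the total sign is $(-1)^{q(r+1)+1}(-1)^{q(r+1)}=-1$, independent of $r$ and $q$, exactly as claimed. The trade-off: the paper's argument is shorter and makes transparent that the statement depends only on $g_{Q}$ (since everything is read off from the local quotients), while yours is self-contained, avoids the implicit identification of $\delta_{T}$, $\nabla$, and $\bar{\ast}$ with their quotient counterparts, and verifies the sign bookkeeping explicitly; your closing observation that no tautness, mean curvature, or Hermitian structure enters is also accurate.
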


\begin{proof}
It follows from the fact that $\delta _{T}$ is locally the pullback of the
ordinary divergence on the local quotients of foliation charts.
\end{proof}

\begin{proposition}
\label{AlvarezClassThm}(Proved in \cite{Al} for the closed manifold case)
Let \vspace{0in}$(M,g_{Q},\mathcal{F})$ be a Riemannian foliation with
compact leaf closures and bundle-like metric. The form $d\kappa _{B}=0$, and 
$\kappa _{B}$ determines a class in $H_{B}^{1}\left( \mathcal{F}\right) $
that is independent of the choice of $g_{M}$ or of $g_{Q}$.
\end{proposition}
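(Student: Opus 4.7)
The proposition has two parts: closedness of $\kappa_B$ and invariance of its cohomology class under the metric choice. I would treat them separately.

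For the closedness $d\kappa_B = 0$, my plan is to extract it algebraically from Proposition \ref{basicAdjointFormulasProp}. That proposition gives $\delta_T \phi = (-1)^{q(r+1)+1}\bar{\ast}\, d_B\, \bar{\ast}\, \phi$ on basic $r$-forms, from which $d_B^2 = 0$ and $\bar{\ast}^2 = \pm\mathrm{id}$ immediately yield $\delta_T^2 = 0$. Since $\delta_T$ is the formal $L^2$-adjoint of $d_T = d_B - \epsilon(\kappa_B)$ on compactly supported basic forms, dualizing gives $\ll d_T^2 \alpha, \gamma \gg = \ll \alpha, \delta_T^2 \gamma \gg = 0$ for all compactly supported basic $\alpha, \gamma$; since $d_T^2$ is a zeroth-order operator and in particular smooth, this forces $d_T^2 = 0$ pointwise. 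A direct expansion using the graded Leibniz rule together with $d_B^2 = 0$ and $\kappa_B \wedge \kappa_B = 0$ yields $d_T^2 = -\epsilon(d\kappa_B)$, and evaluating on the basic function $1$ produces $d\kappa_B = 0$.

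For invariance of $[\kappa_B]$ under the choice of $g_M$ with $g_Q$ fixed, I would connect two such bundle-like metrics $g_0, g_1$ by the linear path $g_t = (1-t)g_0 + t g_1$, which is bundle-like and adapted to $g_Q$ for each $t$. I would then show $\frac{d}{dt}\kappa_{B,t} = d_B f_t$ for some smooth basic function $f_t$, whence $[\kappa_{B,t}]$ is constant. The natural route is to differentiate Rummler's formula (\ref{Rummler}) in $t$ and apply the basic projection, keeping track of how $P_t$ itself varies. Invariance under change of $g_Q$ follows by interpolating within the convex set of holonomy-invariant metrics on $Q$, using that $H_B^\ast(\mathcal{F})$ is a smooth invariant of $\mathcal{F}$ independent of any metric.

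The main obstacle is the variation step: both $\kappa_t$ and the averaging operator $P_t$ depend on $t$, so differentiating the identity $\kappa_{B,t} = P_t \kappa_t$ directly is awkward. A convenient way to sidestep the moving projection is to invoke Proposition \ref{DomGen} and reduce to bundle-like metrics in which $\kappa = \kappa_B$ is already basic; this essentially recovers the closed-manifold proof in \cite{Al}. The only new feature of the compact-leaf-closures setting is that the averaging operator $P$ must still be well-defined and smoothing, which is precisely what the preceding proposition provides, so the argument of \cite{Al} adapts with only notational changes.
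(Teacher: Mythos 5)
Your treatment of the closedness statement is correct and is essentially the paper's own argument: the paper likewise derives $\delta_T^2=0$ (from $\delta_T$ being the local-quotient divergence; your derivation from $\delta_T=\pm\,\bar{\ast}\,d_B\,\bar{\ast}$ and $\bar{\ast}^2=\pm\,\mathrm{id}$ is an equally valid route), dualizes to $d_T^2=0$, and concludes $d_B\kappa_B=d_T\kappa_B=-d_T^2(1)=0$. Your extra care in passing from the weak identity $\ll d_T^2\alpha,\gamma\gg=0$ to the pointwise one via the zeroth-order identity $d_T^2=-\epsilon(d\kappa_B)$ is sound, granted the existence of compactly supported basic bump functions, which the compact-leaf-closure hypothesis supplies.

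The invariance half, however, contains a genuine error: the linear path $g_t=(1-t)g_0+tg_1$ between two bundle-like metrics adapted to the same $g_Q$ is in general \emph{not} bundle-like, because the two metrics may have different horizontal distributions. Concretely, foliate $\mathbb{R}^2$ by the lines $x=\mathrm{const}$ with $g_Q=dx^2$, and take $g_0=dx^2+dy^2$ and $g_1=dx^2+(dy-f(x,y)\,dx)^2$; both are bundle-like adapted to $dx^2$, but for $0<t<1$ the $g_t$-orthogonal complement of $\partial_y$ is spanned by $\partial_x+tf\,\partial_y$, on which
\begin{equation*}
g_t\bigl(\partial_x+tf\,\partial_y,\ \partial_x+tf\,\partial_y\bigr)=1+t(1-t)f^2,
\end{equation*}
so the induced transverse metric fails to be holonomy invariant whenever $f$ genuinely depends on $y$. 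So the interpolation scheme, as stated, never gets off the ground. The proposed repairs also do not close the gap: Proposition \ref{DomGen} rests on Proposition \ref{tensenessTheorem} (Dom\'{\i}nguez), which is stated only for closed manifolds, whereas the present proposition is set on complete, finite-volume manifolds with compact leaf closures; moreover Dom\'{\i}nguez's tenseness theorem is a far deeper result whose development historically presupposes the well-definedness of the \'{A}lvarez class, so invoking it here risks circularity — and even granting basic mean curvature for both metrics, you never argue that the two resulting basic forms differ by $d_Bf$, which is the actual content to be proved. Likewise, for the change of $g_Q$, the topological invariance of the groups $H_B^{\ast}(\mathcal{F})$ is a non sequitur: both classes already live in the same metric-independent group $H_B^1(\mathcal{F})$, and what must be shown is that they coincide there.

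In the end your fallback sentence — that the argument of \cite{Al} is local to neighborhoods of leaf closures and adapts once $P$ is known to be well defined and smoothing — is precisely how the paper itself disposes of the invariance statement, with no new computation. So the honest core of your invariance argument is that citation; the deformation machinery preceding it should be deleted or genuinely repaired (e.g., by interpolating the leafwise metric and the horizontal distribution separately, each of which does vary in a convex/affine family, and then actually producing the potential $f_t$ with $\tfrac{d}{dt}\kappa_{B,t}=d_Bf_t$, which is the hard step you leave unaddressed).
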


\begin{proof}
\vspace{0in}The proof in \cite{Al} is primarily a calculation confined to a
neighborhood of a leaf closure, so that it applies in this slightly more
general case. For the sake of exposition, we show the proof that $\kappa
_{B} $ is closed: We have%
\begin{equation*}
\delta _{B}=\delta _{T}+\kappa _{B}^{\sharp }\lrcorner \,
\end{equation*}%
where $\delta _{T}$ is the divergence on the local quotient manifolds in the
foliation charts. In particular, $\delta _{T}$ only depends on $g_{Q}$. Thus,%
\begin{equation*}
\delta _{T}^{2}=0,
\end{equation*}%
and also 
\begin{equation*}
d_{B}^{2}=0.
\end{equation*}%
Taking adjoints with respect to basic forms, from the three equations above
we have%
\begin{equation*}
d_{T}= \delta _{T} ^{\ast }=d_{B}-\epsilon \left( \kappa _{B}\right)
\end{equation*}%
\begin{equation*}
d_{T}^{2}=0,~\left( \delta _{B}\right) ^{2}=0.
\end{equation*}%
Then 
\begin{eqnarray*}
d_{T}\left( 1\right) &=&\left( d_{B}-\epsilon (\kappa _{B})\right) \left(
1\right) \\
&=&-\kappa _{B}
\end{eqnarray*}%
and%
\begin{eqnarray*}
d_{B}\kappa _{B} &=&\left( d_{T}+\epsilon (\kappa _{B})\right) \kappa _{B} \\
&=&d_{T}\kappa _{B} \\
&=&d_{T}\left( -d_{T}1\right) =0.
\end{eqnarray*}
\end{proof}

\begin{proposition}
\label{tensenessTheorem}(\cite{Dom})Let \vspace{0in}$(M,g_{Q},\mathcal{F})$
be a Riemannian foliation on a closed manifold. Then there exists a
bundle-like metric compatible with $g_{Q}$ such that $\kappa $ is a basic
form; that is, $\kappa =\kappa _{B}$.
\end{proposition}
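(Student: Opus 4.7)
The plan is to modify the bundle-like metric $g_{M}$ within its compatibility class with $g_{Q}$, while keeping $g_{Q}$ itself fixed, so that the new mean curvature becomes basic. The available degrees of freedom are the leafwise metric $g_{M}|_{T\mathcal{F}}$ and the choice of orthogonal complement $T\mathcal{F}^{\perp}$; my approach is to average both over the closure foliation $\overline{\mathcal{F}}$, whose leaves are the closures of leaves of $\mathcal{F}$.

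I would invoke Molino's structure theory. Since the leaf closures are compact, on each leaf closure $\overline{L}$ the restricted foliation $\mathcal{F}|_{\overline{L}}$ is a Lie foliation, and, in a suitable sense, it is generated by a transitively acting compact Lie group arising from the closure of the transverse holonomy pseudogroup. Starting from any bundle-like metric, averaging $g_{M}|_{T\mathcal{F}}$ and the distribution $T\mathcal{F}^{\perp}$ over this group action, consistently across leaf closures via the Molino basic fibration, produces a new bundle-like metric $\widetilde{g}_{M}$. Because the group acts by isometries of $g_{Q}$, $\widetilde{g}_{M}$ remains compatible with $g_{Q}$. The resulting metric is invariant along leaf closures, and since the mean curvature is locally determined by $g_{M}|_{T\mathcal{F}}$ and $T\mathcal{F}^{\perp}$ via Rummler's formula (\ref{Rummler}), the new mean curvature form $\widetilde{\kappa}$ is $\overline{\mathcal{F}}$-invariant. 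Combined with the automatic horizontality of $\widetilde{\kappa}$ on $T\mathcal{F}$ and the inclusion $T\mathcal{F}\subset T\overline{\mathcal{F}}$, this shows that $\widetilde{\kappa}$ is basic with respect to $\mathcal{F}$, so $\widetilde{\kappa}=\widetilde{\kappa}_{B}$.

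The hard part is making the averaging globally coherent. The structural Lie group acts cleanly on each leaf closure but is generally not induced by a single smooth group action on $M$; one would either lift the construction to the Molino basic manifold, where a global action is available, and then descend back to $M$, or else patch local averages together using a partition of unity subordinate to the closure fibration. In either case, the delicate point is to verify that the procedure yields a smooth, globally defined bundle-like metric, preserves the compatibility relation $g_{Q}=\sigma^{\ast}(g_{M}|_{T\mathcal{F}^{\perp}})$, and interacts with the Rummler decomposition in the required way so that $\overline{\mathcal{F}}$-invariance of $\widetilde{g}_{M}$ transfers to $\overline{\mathcal{F}}$-invariance of $\widetilde{\kappa}$.
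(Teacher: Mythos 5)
The paper offers no proof of this proposition at all — it is quoted from \cite{Dom} — so your sketch has to stand on its own, and as written it has two genuine gaps. The first is the inference from $\overline{\mathcal{F}}$-invariance of $\widetilde{g}_{M}$ (hence, by naturality, of $\widetilde{\kappa}$) to basicity of $\widetilde{\kappa}$. Basicity requires $\mathcal{L}_{X}\widetilde{\kappa}=0$ for every $X\in \Gamma \left( T\mathcal{F}\right) $ (the condition $X\lrcorner \widetilde{\kappa}=0$ is automatic from the definition of $\kappa $), and the symmetries produced by Molino theory act along the leaf closures but are not generated by leafwise fields: the flows of sections of $T\mathcal{F}$ are not isometries of $\widetilde{g}_{M}$. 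Concretely, if $Y_{i}$ are fundamental fields of the structural action (spanning $T\overline{\mathcal{F}}$ pointwise) and $X=\sum_{i}f_{i}Y_{i}\in \Gamma \left( T\mathcal{F}\right) $, then invariance gives $\mathcal{L}_{Y_{i}}\widetilde{\kappa}=0$, but $\mathcal{L}_{X}\widetilde{\kappa}=\sum_{i}\widetilde{\kappa}\left( Y_{i}\right) df_{i}$, which has no reason to vanish, since $\widetilde{\kappa}$ need not annihilate the directions transverse to $\mathcal{F}$ inside a leaf closure. Nor can you repair this by arranging the leaves themselves to be orbits of a subgroup of isometries: for flows, a Riemannian flow is isometric for some metric if and only if it is taut (Molino--Sergiescu), whereas the proposition matters precisely in the nontaut case, e.g.\ the Carri\`{e}re foliation of Example \ref{KaehlerExactMeanCurv}. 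So an additional argument is needed to convert transverse invariance into invariance of $\widetilde{\kappa}$ in the leaf directions; your appeal to ``automatic horizontality plus $T\mathcal{F}\subset T\overline{\mathcal{F}}$'' does not supply it.

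The second gap is that the step you explicitly defer — global coherence of the averaging — is not a technicality but the actual content of Dom\'{\i}nguez's theorem. On $M$ the closure foliation $\overline{\mathcal{F}}$ is in general a \emph{singular} Riemannian foliation: the dimension of the leaf closures jumps, the space of closures is only a stratified space, and the Molino basic fibration exists only for the lifted foliation on the transverse orthonormal frame bundle, from which there is no canonical descent of an averaged leafwise metric to $M$. There is no global structural group action on $M$ (already in Example \ref{KaehlerExactMeanCurv} the fiberwise translations are twisted by $A$, and the local Killing field $\partial _{x}$ does not globalize), and patching local averages by a partition of unity destroys the invariance you need, since a convex combination of metrics invariant under \emph{different} local actions is in general invariant under neither. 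Smoothness of a closure-averaged leafwise metric across the singular strata is exactly where the difficulty concentrates; Dom\'{\i}nguez's published proof handles this by a long direct construction rather than by averaging, and even the special case of flows required a separate substantial Molino-theoretic argument in the later literature. As it stands, then, your proposal correctly identifies the available degrees of freedom (the leafwise metric and the complement $T\mathcal{F}^{\perp }$, with $g_{Q}$ fixed) and a natural heuristic, but both of its essential steps — invariance-to-basicity and the global averaging — remain open, so it is a program rather than a proof.
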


\begin{proposition}
\label{harmonicMeanCurvThm}(\cite{MO} and \cite{MA}) Let \vspace{0in}$%
(M,g_{Q},\mathcal{F})$ be a Riemannian foliation on a closed manifold. Then
there exists a bundle-like metric compatible with $g_{Q}$ such that $\kappa $
is basic harmonic; that is, $\kappa =\kappa _{B}$ and $\delta _{B}\kappa =0$.
\end{proposition}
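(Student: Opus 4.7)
The plan is to refine Proposition \ref{tensenessTheorem} using basic Hodge theory together with a conformal deformation of the leafwise metric that leaves $g_{Q}$ invariant. Start by applying Proposition \ref{tensenessTheorem} to choose a bundle-like metric $g_{M}$ compatible with $g_{Q}$ for which $\kappa =\kappa _{B}$ is already basic; by Proposition \ref{AlvarezClassThm}, this $\kappa _{B}$ is $d_{B}$-closed and represents the \'{A}lvarez class in $H_{B}^{1}(\mathcal{F})$. Since $M$ is closed, basic Hodge theory provides a unique $\Delta _{B}$-harmonic representative $\kappa _{B}^{h}$ and a basic function $f\in \Omega _{B}^{0}(\mathcal{F})$ with
\begin{equation*}
\kappa _{B}=\kappa _{B}^{h}+d_{B}f.
\end{equation*}

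Next, rescale the metric only along the leaves: define $\hat{g}_{M}$ by $\hat{g}_{M}\vert _{T\mathcal{F}}=e^{2f/p}g_{M}\vert _{T\mathcal{F}}$ and $\hat{g}_{M}\vert _{T\mathcal{F}^{\perp }}=g_{M}\vert _{T\mathcal{F}^{\perp }}$, with $T\mathcal{F}$ and $T\mathcal{F}^{\perp }$ still $\hat{g}_{M}$-orthogonal. Because $f$ is basic, both the orthogonal complement $T\mathcal{F}^{\perp }$ and the induced transverse metric are unchanged, so $\hat{g}_{M}$ remains bundle-like and compatible with $g_{Q}$. The $\hat{g}_{M}$-orthonormal leafwise frame is $\{e^{-f/p}f_{i}\}$, so the new characteristic form is $\hat{\chi}_{\mathcal{F}}=e^{f}\chi _{\mathcal{F}}$. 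Differentiating and applying Rummler's identity \eqref{Rummler} for $g_{M}$ yields
\begin{equation*}
d\hat{\chi}_{\mathcal{F}}=e^{f}\,df\wedge \chi _{\mathcal{F}}+e^{f}d\chi _{\mathcal{F}}=-(\kappa _{B}-d_{B}f)\wedge \hat{\chi}_{\mathcal{F}}+e^{f}\varphi _{0},
\end{equation*}
so comparison with Rummler's formula for $\hat{g}_{M}$ produces the candidate $\hat{\kappa}=\kappa _{B}-d_{B}f=\kappa _{B}^{h}$, which is basic and $\Delta _{B}$-harmonic.

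The main technical point is verifying that $e^{f}\varphi _{0}$ satisfies the Hodge-orthogonality condition $\hat{\chi}_{\mathcal{F}}\wedge \hat{\ast}(e^{f}\varphi _{0})=0$ that characterizes the remainder $\hat{\varphi}_{0}$ in the Rummler decomposition for $\hat{g}_{M}$. The key observation is that the bundle splitting $TM=T\mathcal{F}\oplus T\mathcal{F}^{\perp }$ is unchanged by a basic conformal rescaling along the leaves, so the bidegree decomposition of forms is preserved. Since $d\chi _{\mathcal{F}}$ has components only in bidegrees $(p,1)$ and $(p-1,2)$ (a $(p+1,0)$ component being impossible) and the $(p,1)$-part is exactly $-\kappa \wedge \chi _{\mathcal{F}}$, the form $\varphi _{0}$ is of pure bidegree $(p-1,2)$. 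Then $\hat{\ast}(e^{f}\varphi _{0})$ has bidegree $(1,q-2)$, making $\hat{\chi}_{\mathcal{F}}\wedge \hat{\ast}(e^{f}\varphi _{0})$ a form of bidegree $(p+1,q-2)$, which vanishes automatically. Thus $\hat{\kappa}=\kappa _{B}^{h}$, and $\hat{g}_{M}$ is the desired bundle-like metric with $\kappa =\kappa _{B}$ basic and $\delta _{B}\kappa =0$.
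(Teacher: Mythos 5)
Your rescaling mechanics are correct, and in fact they reproduce the paper's own proof of Corollary \ref{harmonicMeanCurvCorollary}: multiplying the leafwise metric by $e^{2f/p}$ changes the characteristic form to $e^{f}\chi _{\mathcal{F}}$ and the mean curvature to $\kappa _{B}-d_{B}f$, so every representative of the \'{A}lvarez class is realizable as a basic mean curvature. The genuine gap is at the last step: ``basic harmonic'' in the Proposition means harmonic with respect to the \emph{new} metric $\hat{g}_{M}$, and the basic codifferential is not invariant under your leafwise conformal change. By Proposition \ref{basicAdjointFormulasProp}, $\delta _{B}=\delta _{T}+\kappa _{B}^{\#}\lrcorner $, where $\delta _{T}$ depends only on $g_{Q}$ but the second term depends on the leafwise metric through $\kappa _{B}$; equivalently, the basic $L^{2}$ inner product acquires the weight $e^{f}$ because $\hat{\mu}_{M}=\nu \wedge \hat{\chi}_{\mathcal{F}}=e^{f}\mu _{M}$. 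Concretely, in your new metric $\hat{\delta}_{B}\kappa _{B}^{h}=\delta _{T}\kappa _{B}^{h}+\left\vert \kappa _{B}^{h}\right\vert ^{2}$, while harmonicity in the \emph{old} metric says $\delta _{T}\kappa _{B}^{h}+\langle \kappa _{B}^{h}+d_{B}f,\kappa _{B}^{h}\rangle =0$; subtracting gives
\begin{equation*}
\hat{\delta}_{B}\kappa _{B}^{h}=-\langle d_{B}f,\kappa _{B}^{h}\rangle ,
\end{equation*}
which has no reason to vanish pointwise. A sanity check: if your argument were valid, two different tense starting metrics compatible with the same $g_{Q}$ would generally produce two different ``harmonic'' representatives realized as mean curvatures, contradicting the uniqueness asserted in Corollary \ref{harmonicMeanCurvCorollary}.

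Note that the paper does not prove this Proposition either; it cites \cite{MO} and \cite{MA} precisely because the existence is a nontrivial analytic fact that Hodge theory in a fixed metric cannot deliver. Writing the conformal change as $\hat{\chi}_{\mathcal{F}}=\psi \chi _{\mathcal{F}}$ with $\psi >0$ basic, so $\hat{\kappa}=\kappa _{B}-d_{B}\log \psi $, a short computation using $\hat{\delta}_{B}=\delta _{T}+\hat{\kappa}^{\#}\lrcorner $ gives $\hat{\delta}_{B}\hat{\kappa}=-\psi ^{-1}\delta _{B}d_{T}\psi $. So the Proposition is equivalent to producing a \emph{positive} basic function in the kernel of the non-self-adjoint second-order elliptic operator $\delta _{B}d_{T}$ --- a principal-eigenfunction (Perron--Frobenius type) problem, solved in \cite{MO} by a heat-flow argument and in \cite{MA} by stochastic flows. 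The paper itself flags exactly this in the remark following Proposition \ref{harmonicMeanCurvKahlerThm}, where finding such a metric in the Hermitian setting is said to be tantamount to finding a smooth, positive basic $\psi $ with $\partial _{B}^{\ast }\partial _{T}\psi =0$. In summary: your proof correctly establishes realizability of any class representative (which is Corollary \ref{harmonicMeanCurvCorollary}), but the selection of the representative that is harmonic \emph{for its own metric} is a fixed-point problem, not a linear Hodge-theoretic one.
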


\begin{corollary}
\label{harmonicMeanCurvCorollary}Let \vspace{0in}$(M,g_{Q},\mathcal{F})$ be
a Riemannian foliation on a closed manifold, and let $\alpha $ be any
element of the class $\left[ \kappa _{B}\right] \in H_{B}^{1}\left( \mathcal{%
F}\right) .$ Then there exists a bundle-like metric compatible with $g_{Q}$
such that $\kappa =\alpha $. The representative $\alpha $ corresponding to a
bundle-like metric such that $\alpha =\kappa $ is basic harmonic is uniquely
determined. For that metric, $\kappa $ is the element of $\left[ \kappa _{B}%
\right] $ of minimum $L^{2}$-norm.
\end{corollary}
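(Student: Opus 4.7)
The plan is to build on Propositions \ref{tensenessTheorem} and \ref{harmonicMeanCurvThm} together with a leafwise conformal rescaling. First I would invoke Proposition \ref{tensenessTheorem} to fix a bundle-like metric $g_0$ compatible with $g_Q$ for which the mean curvature $\kappa_0$ is already basic, so that every $\alpha\in [\kappa_B]$ can be written as $\alpha=\kappa_0+df$ for some basic function $f$. Given such an $\alpha$, modify $g_0$ by rescaling only the leaf direction: $g:=e^{-2f/p}\,g_0|_{T\mathcal{F}}\oplus g_0|_{T\mathcal{F}^{\perp}}$. Because $f$ is basic, $g$ is still bundle-like and compatible with the same $g_Q$; its characteristic form becomes $e^{-f}\chi_{\mathcal{F}}$, and Rummler's formula (\ref{Rummler}) then reads off the new mean curvature from the leafwise top-degree part of $d(e^{-f}\chi_{\mathcal{F}})$ as $\kappa=\kappa_0+df=\alpha$, proving the existence statement.

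For uniqueness, suppose $\alpha_1,\alpha_2\in[\kappa_B]$ both arise this way, with each $\alpha_i$ basic harmonic for its own metric $g_i$. Proposition \ref{basicAdjointFormulasProp} gives $\delta_B^{g_i}\alpha_i=\delta_T\alpha_i+|\alpha_i|_Q^2=0$, a condition depending only on $g_Q$. Writing $\alpha_2=\alpha_1+df$ and subtracting the two equations produces the nonlinear identity
\[
\delta_T df+2\langle\alpha_1,df\rangle+|df|^2=0.
\]
The crucial trick is the substitution $v=e^{-f}$: a short calculation using the product rule for $\delta_T$ (available from Lemma \ref{divergenceLemma}) converts this into the linear equation $\delta_T dv+2\langle\alpha_1,dv\rangle=0$. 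Pairing with $v$ in the $L^2$-inner product of $g_1$ and integrating by parts via $\delta_T=\delta_B^{g_1}-\alpha_1^{\sharp}\lrcorner$, the cross term collapses to $\tfrac{1}{2}\ll \delta_B^{g_1}\alpha_1,\,v^2\gg$, which vanishes by the basic harmonicity of $\alpha_1$; what remains is $\ll dv,dv\gg_{g_1}=0$, forcing $v$ to be constant and so $\alpha_1=\alpha_2$.

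The final assertion is then routine Hodge theory: in the distinguished metric $g$ for which $\kappa=\alpha$ is basic harmonic, the basic Hodge decomposition applied to $(\Omega_B^{\ast}(\mathcal{F}),d_B,g)$ writes every $\beta\in[\kappa_B]$ as $\beta=\alpha+d_B h$ with $h$ basic, and orthogonality of this splitting gives $\ll\beta,\beta\gg_g=\ll\alpha,\alpha\gg_g+\ll d_B h,d_B h\gg_g\geq \ll\alpha,\alpha\gg_g$.

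The main obstacle will be the uniqueness step. The two harmonicity conditions are nonlinear in the cohomology representative because $\delta_B$ itself depends on $\kappa_B$. The substitution $v=e^{-f}$ is the sharp linearization, and the vanishing of the cross term in the final $L^2$ pairing is precisely where the basic harmonicity of $\alpha_1$ with respect to $g_1$ is used; without this structural identity the argument reduces to a genuinely nonlinear PDE whose solutions need not be constant.
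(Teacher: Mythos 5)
Your proposal is correct, and two of its three steps coincide with the paper's own proof: the existence argument via the leafwise conformal rescaling $e^{-2f/p}$ and Rummler's formula (\ref{Rummler}) is exactly the paper's computation, and the minimality step is the paper's orthogonality calculation $\ll \kappa +df,\kappa +df\gg \,=\, \ll \kappa ,\kappa \gg +\ll df,df\gg$ verbatim. Where you genuinely diverge is uniqueness: the paper dispatches it in one sentence by citing the proof in \cite{MO}, where the volume form $\nu \wedge \chi _{\mathcal{F}}^{\prime }$ is shown to be uniquely determined up to rescaling, whereas you give a self-contained PDE argument, and it checks out. Since $\delta _{B}^{g_{i}}=\delta _{T}+\alpha _{i}^{\sharp }\lrcorner$ (Proposition \ref{basicAdjointFormulasProp}) with $\delta _{T}$ and the pointwise norm depending only on the fixed $g_{Q}$, the two harmonicity conditions become $\delta _{T}\alpha _{i}+|\alpha _{i}|^{2}=0$, and subtraction with $\alpha _{2}=\alpha _{1}+df$ gives your nonlinear identity. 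The substitution $v=e^{-f}$ does linearize it: by the product rule from Lemma \ref{divergenceLemma},
\begin{equation*}
\delta _{T}dv=\delta _{T}(-v\,df)=-v\left( \delta _{T}df+|df|^{2}\right)
=2v\langle \alpha _{1},df\rangle =-2\langle \alpha _{1},dv\rangle ,
\end{equation*}
and pairing with $v$ while writing $\delta _{T}=\delta _{B}^{g_{1}}-\alpha _{1}^{\sharp }\lrcorner$ yields $0=\ll dv,dv\gg +\frac{1}{2}\ll \delta _{B}^{g_{1}}\alpha _{1},v^{2}\gg \,=\,\ll dv,dv\gg$, so $v$ is constant and $\alpha _{1}=\alpha _{2}$. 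What each approach buys: the paper's route is shorter but opaque, requiring the reader to unpack the structure of \cite{MO}; yours stays entirely within the identities already established in Section \ref{meanCurvRiemFolsSection}, and makes transparent exactly where the positivity $v=e^{-f}>0$ and the harmonicity $\delta _{B}^{g_{1}}\alpha _{1}=0$ enter. One point worth stating explicitly in a final write-up: both metrics must be compatible with the \emph{same} $g_{Q}$, so that $\delta _{T}$ and $\langle \cdot ,\cdot \rangle$ agree for the two equations being subtracted --- this is in the corollary's hypotheses, but your subtraction step uses it silently.
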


\begin{proof}
Given any bundle-like metric with basic mean curvature $\kappa $ as in
Proposition \ref{tensenessTheorem}, any element of $\left[ \kappa \right] =%
\left[ \kappa _{B}\right] $ is of the form $\kappa +df$ for some basic
function $f$. If $p=\dim \mathcal{F}$, multiplying the leafwise metric by $%
e^{-(2/p)f}$ yields a new characteristic form $\chi _{\mathcal{F}}^{\prime
}=e^{-f}\chi _{\mathcal{F}}$ so that the new mean curvature form from (\ref%
{Rummler}) satisfies 
\begin{eqnarray*}
-\kappa ^{\prime }\wedge \chi _{\mathcal{F}}^{\prime }+\varphi _{0}^{\prime
} &=&d\left( \chi _{\mathcal{F}}^{\prime }\right) \\
&=&-df\wedge \chi _{\mathcal{F}}^{\prime }+e^{-f}d\chi _{\mathcal{F}} \\
&=&-\left( \kappa +df\right) \wedge \chi _{\mathcal{F}}^{\prime }+\varphi
_{0}^{\prime }.
\end{eqnarray*}%
The second part comes from the proof in \cite{MO}, where the volume form $%
\nu \wedge \chi _{\mathcal{F}}^{\prime }$ is uniquely determined (up to
rescaling, which does not change $\kappa $). The third part comes from the
fact that if $\delta _{B}\kappa =d\kappa =0$,%
\begin{eqnarray*}
\ll \kappa +df,\kappa +df\gg &=&\ll \kappa ,\kappa \gg +2\ll df,\kappa \gg
+\ll df,df\gg \\
&=&\ll \kappa ,\kappa \gg +2\ll f,\delta _{B}\kappa \gg +\ll df,df\gg \\
&=&\ll \kappa ,\kappa \gg +\ll df,df\gg .
\end{eqnarray*}
\end{proof}

The \textit{basic Laplacian} $\Delta _{B}$ is the operator on basic forms
defined as 
\begin{equation*}
\Delta _{B}=\delta _{B}d_{B}+d_{B}\delta _{B}.
\end{equation*}%
We define the operator $\Delta _{T}$ on basic forms as the corresponding
Laplacian on the local quotient manifolds. Specifically,%
\begin{equation*}
\Delta _{T}=\delta _{T}d_{B}+d_{B}\delta _{T}.
\end{equation*}%
The operator $\Delta _{T}$ is not essentially self-adjoint on the space of
basic forms, but the operator $\Delta _{B}$ is.

\begin{lemma}
The basic Laplacian is the restriction of the operator%
\begin{equation*}
\Delta _{B}=\Delta _{T}+\mathcal{L}_{\kappa _{B}^{\#}}.
\end{equation*}
\end{lemma}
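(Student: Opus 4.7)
The plan is to reduce the identity to a short algebraic manipulation using Proposition \ref{basicAdjointFormulasProp} and then to recognize the resulting expression via Cartan's magic formula.

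First I would subtract the two defining expressions and collect terms:
\begin{equation*}
\Delta_{B}-\Delta_{T}=(\delta_{B}d_{B}+d_{B}\delta_{B})-(\delta_{T}d_{B}+d_{B}\delta_{T})=(\delta_{B}-\delta_{T})d_{B}+d_{B}(\delta_{B}-\delta_{T}).
\end{equation*}
By Proposition \ref{basicAdjointFormulasProp}, $\delta_{B}-\delta_{T}=\kappa_{B}^{\sharp}\lrcorner$, so the right-hand side equals $\kappa_{B}^{\sharp}\lrcorner\,d_{B}+d_{B}\,\kappa_{B}^{\sharp}\lrcorner$.

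Next I would apply Cartan's magic formula $\mathcal{L}_{X}=\iota_{X}\circ d+d\circ\iota_{X}$ with $X=\kappa_{B}^{\sharp}$, viewed as a vector field on $M$ via the canonical isomorphism $\sigma:Q\to T\mathcal{F}^{\perp}$. Since $d$ agrees with $d_{B}$ on basic forms and interior product by a vector in $T\mathcal{F}^{\perp}$ agrees with $\kappa_{B}^{\sharp}\lrcorner$ on basic forms, the expression $\kappa_{B}^{\sharp}\lrcorner\,d_{B}+d_{B}\,\kappa_{B}^{\sharp}\lrcorner$ is precisely the restriction of $\mathcal{L}_{\kappa_{B}^{\sharp}}$ to $\Omega_{B}^{*}(\mathcal{F})$. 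Combined with the previous display, this yields $\Delta_{B}=\Delta_{T}+\mathcal{L}_{\kappa_{B}^{\sharp}}$ on basic forms.

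The one point that needs checking is that this combination genuinely acts within $\Omega_{B}^{*}(\mathcal{F})$. This is immediate from the two-term formula: if $\phi$ is basic then $d_{B}\phi$ is basic, so $\kappa_{B}^{\sharp}\lrcorner d_{B}\phi$ is basic (contraction of a basic form by a section of $T\mathcal{F}^{\perp}$ preserves basicness), and similarly $d_{B}(\kappa_{B}^{\sharp}\lrcorner\phi)$ is basic. Consequently both sides of the claimed identity are well-defined operators on $\Omega_{B}^{*}(\mathcal{F})$, and no further regularity or foliate-vector-field argument is required. The main (mild) obstacle is simply recognizing the Cartan pattern in the remainder after subtracting $\Delta_{T}$; once that is observed the proof is purely algebraic.
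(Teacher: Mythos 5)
Your proof is correct and is essentially the paper's own argument: substitute $\delta _{B}=\delta _{T}+\kappa _{B}^{\sharp }\lrcorner $ from Proposition \ref{basicAdjointFormulasProp} into the definition of $\Delta _{B}$ and recognize Cartan's formula $\mathcal{L}_{X}=X\lrcorner \,d+d\,X\lrcorner $ in the leftover terms. One small precision worth noting: contraction by an \emph{arbitrary} section of $T\mathcal{F}^{\perp }$ does not preserve basic forms (one needs $\mathcal{L}_{X}$-invariance, not just $X\lrcorner $-annihilation, for $X\in \Gamma (T\mathcal{F})$); what makes your basicness check go through is that $\kappa _{B}^{\sharp }$ is a \emph{basic} (holonomy-invariant) section of $Q$, since $\kappa _{B}$ is a basic one-form.
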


\begin{proof}
From Proposition \ref{basicAdjointFormulasProp}, 
\begin{eqnarray*}
\Delta _{B} &=&\left( \delta _{T}+\kappa _{B}^{\#}\lrcorner \right)
d_{B}+d_{B}\left( \delta _{T}+\kappa _{B}^{\#}\lrcorner \right) \\
&=&\Delta _{T}+\left( \kappa _{B}^{\#}\lrcorner \right) d_{B}+d_{B}\left(
\kappa _{B}^{\#}\lrcorner \right) .
\end{eqnarray*}%
The result follows from Cartan's formula for the Lie derivative.
\end{proof}

\section{Properties of the mean curvature for transverse Hermitian
foliations \label{transHermSection}}

We now suppose that $\left( M,\mathcal{F}\right) $ is a foliation of
codimension $2n$ and is endowed with a holonomy-invariant transverse complex
structure $J:Q\rightarrow Q$ and a holonomy-invariant Hermitian metric on
the complexified normal bundle; we call such a foliation a \emph{transverse
Hermitian foliation}. So in particular the foliation is Riemannian. When it
is convenient, we will also refer to the bundle map $J^{\prime
}:TM\rightarrow TM$ defined by $J^{\prime }(v)=J(\pi (v))$ and abuse
notation by denoting $J=J^{\prime }$. In what follows, we use notation
similar to \cite{JR2}.

For $Q^{C}=Q\otimes \mathbb{C}$, we let 
\begin{equation*}
Q^{1,0}=\{Z\in Q^{C}|JZ=iZ\},\quad Q^{0,1}=\{Z\in Q^{C}|JZ=-iZ\}.
\end{equation*}%
Elements of $Q^{1,0}$ and $Q^{0,1}$ are called \textit{complex normal vector
fields of type} $(1,0)$ and $\left( 0,1\right) $, respectively. We have $%
Q^{C}=Q^{1,0}\oplus Q^{0,1}$ and 
\begin{equation*}
Q^{1,0}=\{X-iJX|\ X\in Q\},\quad Q^{0,1}=\{X+iJX|\ X\in Q\}.
\end{equation*}%
Let $Q_{C}^{\ast }$ be the real dual of $Q^{C}$; at each $x\in M$, $\left(
Q_{C}^{\ast }\right) _{x}$ is set of $\mathbb{C}$-linear maps from $Q_{x}^{C}
$ to $\mathbb{C}$. Letting $\Lambda _{C}Q^{\ast }$ denote $\Lambda
Q_{C}^{\ast }$, we decompose $\Lambda _{C}^{1}Q^{\ast }=Q_{1,0}\oplus Q_{0,1}
$, where the sub-bundles $Q_{1,0}$ and $Q_{0,1}$ are given by 
\begin{align*}
Q_{1,0}& =\{\xi \in \Lambda _{C}^{1}Q^{\ast }|\ \xi (Z)=0,\ \forall Z\in
Q^{0,1}\}, \\
Q_{0,1}& =\{\xi \in \Lambda _{C}^{1}Q^{\ast }|\ \xi (Z)=0,\ \forall Z\in
Q^{1,0}\}.
\end{align*}%
Also 
\begin{equation*}
Q_{1,0}=\{\theta +iJ\theta |\ \theta \in Q^{\ast }\},\quad Q_{0,1}=\{\theta
-iJ\theta |\ \theta \in Q^{\ast }\},
\end{equation*}%
where $(J\theta )(X):=-\theta (JX)$ for any $X\in Q$ and is extended
linearly. Let $\Omega _{B}^{r,s}(\mathcal{F})$ be the set of the basic forms
of type $(r,s)$, the smooth sections of $\Lambda _{C}^{r,s}Q^{\ast }$, which
is the subspace of $\Lambda _{C}Q^{\ast }$ spanned by $\xi \wedge \eta $,
where $\xi \in \Lambda ^{r}Q_{1,0}$ and $\eta \in \Lambda ^{s}Q_{0,1}$. We
choose $\{E_{a},JE_{a}\}_{a=1,\cdots ,n}$ so that it is a local orthonormal
basic frame; we call it a\emph{\ }$J$\emph{-basic frame}) of $Q$. Let $%
\{\theta ^{a},J\theta ^{a}\}_{a=1,\cdots ,n}$ be the local dual frame of $%
Q^{\ast }$. We set $V_{a}={\frac{1}{\sqrt{2}}}(E_{a}-iJE_{a})$ and $\omega
^{a}={\frac{1}{\sqrt{2}}}(\theta ^{a}+iJ\theta ^{a})$, so that 
\begin{equation*}
\omega ^{a}(V_{b})=\bar{\omega}^{a}(\bar{V}_{b})=\delta _{ab},\ \omega ^{a}(%
\bar{V}_{b})=\bar{\omega}^{a}(V_{b})=0.
\end{equation*}%
The frame $\{V_{a}\}$ is a local orthonormal basic frame field of $Q^{1,0}$,
a \emph{normal frame field of type }$(1,0)$, and $\{\omega ^{a}\}$ is the
corresponding dual coframe field.

The following 2-form $\omega $ is nondegenerate. Letting $\theta
^{n+a}=J\theta ^{a}$ for $a=1,...,n$,%
\begin{eqnarray*}
\omega &=&-\frac{1}{2}\sum_{a=1}^{2n}\theta ^{a}\wedge J\theta ^{a} \\
&=&-\frac{1}{2}\left( \sum_{a=1}^{n}\theta ^{a}\wedge J\theta
^{a}+\sum_{a=1}^{n}J\theta ^{a}\wedge J^{2}\theta ^{a}\right) \\
&=&-\sum_{a=1}^{n}\theta ^{a}\wedge J\theta ^{a}.
\end{eqnarray*}%
In the event that $\omega $ is closed, this is the K\"{a}hler form, and the
foliation is \emph{transversely K\"{a}hler}.

\vspace{0in}We define $\left. \partial _{B}\right\vert _{\Omega _{B}^{r,s}(%
\mathcal{F})}=\Pi ^{r+1,s}\left. d_{B}\right\vert _{\Omega _{B}^{r,s}(%
\mathcal{F})}$, where $\Pi ^{r,s}:\Omega _{B}^{r+s}\left( \mathcal{F}\right)
\rightarrow \Omega _{B}^{r,s}\left( \mathcal{F}\right) $ is the projection,
and similarly $\left. \overline{\partial }_{B}\right\vert _{\Omega
_{B}^{r,s}(\mathcal{F})}=\Pi ^{r,s+1}\left. d_{B}\right\vert _{\Omega
_{B}^{r,s}(\mathcal{F})}$. Similarly, we define $\partial _{T}$ and $\bar{%
\partial}_{T}$, using $d_{T}$ from (\ref{dTdBFormulas}). We now write $%
\kappa _{B}=\kappa _{B}^{1,0}+\kappa _{B}^{0,1}$, with 
\begin{equation*}
\kappa _{B}^{1,0}=\frac{1}{2}(\kappa _{B}+iJ\kappa _{B})\in \Omega
_{B}^{1,0}(\mathcal{F}),~\kappa _{B}^{0,1}=\overline{\kappa _{B}^{1,0}}\in
\Omega _{B}^{0,1}(\mathcal{F}).
\end{equation*}%
Let $H=\kappa _{B}^{\#}$ be the basic mean curvature vector field, and let%
\begin{eqnarray}
H^{1,0} &:&=\left( \kappa _{B}^{1,0}\right) ^{\ast }=\left( \overline{\kappa
_{B}^{1,0}}\right) ^{\#}=\frac{1}{2}(\kappa _{B}^{\#}-iJ\kappa _{B}^{\#})\in
\Gamma \left( Q^{1,0}\right) ,  \label{HKappaFormulas} \\
H^{0,1} &:&=\overline{H^{1,0}}\in \Gamma \left( Q^{1,0}\right) .  \notag
\end{eqnarray}

In what follows, we extend the definitions of exterior product and interior
product linearly to complex vectors and differential forms. Observe that $%
V\lrcorner $ is by definition the adjoint of $\epsilon \left( V^{\flat
}\right) $ on real vector fields, but on complex vector fields the following
holds. If $v,w$ are real tangent vectors,%
\begin{eqnarray*}
\left( v+iw\right) \lrcorner  &=&\left( v\lrcorner \right) +i\left(
w\lrcorner \right) =\left( \epsilon \left( v^{\flat }\right) -i\epsilon
\left( w^{\flat }\right) \right) ^{\ast } \\
&=&\left( \epsilon \left( v^{\flat }-iw^{\flat }\right) \right) ^{\ast },
\end{eqnarray*}%
so that for complex vectors $V$,%
\begin{equation*}
\left( V\lrcorner \right) ^{\ast }=\epsilon \left( \overline{V}^{\flat
}\right) ,~\left( \epsilon \left( V^{\flat }\right) \right) ^{\ast }=%
\overline{V}\lrcorner .
\end{equation*}

Now, let $\langle \cdot ,\cdot \rangle $ be a Hermitian inner product on $%
\Lambda _{C}^{r,s}(\mathcal{F})$ induced by the transverse Hermitian
structure, and let $\bar{\ast}:\Lambda _{C}^{r,s}(\mathcal{F})\rightarrow
\Lambda _{C}^{n-s,n-r}(\mathcal{F})$ be the star operator defined by 
\begin{equation*}
\phi \wedge \bar{\ast}\bar{\psi}=\langle \phi ,\psi \rangle \nu ,
\end{equation*}%
where $\nu $ is the transverse volume form. Then for any $\psi \in \Lambda
_{C}^{r,s}(\mathcal{F})$, 
\begin{equation*}
\overline{\bar{\ast}\psi }=\bar{\ast}\bar{\psi},\quad \bar{\ast}^{2}\psi
=(-1)^{r+s}\psi .
\end{equation*}%
Then for complex vectors $V$ it follows that%
\begin{equation*}
\bar{\ast}\epsilon \left( V^{\flat }\right) \bar{\ast}=\overline{V}\lrcorner
,~\bar{\ast}\left( V\lrcorner \right) \bar{\ast}=-\epsilon \left( \overline{V%
}^{\flat }\right) .
\end{equation*}

Now, by applying the projections to $\Pi ^{\ast ,\ast }$ to (\ref%
{dTdBFormulas}), we have 
\begin{equation*}
\partial _{T}=\partial _{B}-\epsilon (\kappa _{B}^{1,0}),\quad \bar{\partial}%
_{T}=\bar{\partial}_{B}-\epsilon (\kappa _{B}^{0,1}).
\end{equation*}%
Then, since $\left( M,\mathcal{F},J\right) $ is transversely holomorphic,%
\begin{equation*}
d_{B}=\partial _{B}+\bar{\partial}_{B},~d_{T}=\partial _{T}+\bar{\partial}%
_{T}.
\end{equation*}%
Taking $L^{2}$ adjoints with respect to basic forms of the formulas above,
we have%
\begin{equation*}
\delta _{B}=\partial _{B}^{\ast }+\bar{\partial}_{B}^{\ast },~\delta
_{T}=\partial _{T}^{\ast }+\bar{\partial}_{T}^{\ast }.
\end{equation*}

\begin{proposition}
\label{delBstarFormulaProp}\cite[Prop. 3.6]{JR2} Let \vspace{0in}$(M,g_{Q},%
\mathcal{F})$ be a transverse Hermitian foliation with compact leaf closures
and bundle-like metric. The formal adjoint operators $\partial _{B}^{\ast }$
and $\partial _{T}^{\ast }$ of $\partial _{B}$ and $\partial _{T}$ with
respect to $\ll \cdot ,\cdot \gg $ on basic forms are given by 
\begin{equation*}
\partial _{B}^{\ast }\phi =-\bar{\ast}\partial _{T}\bar{\ast}\phi =\left(
\partial _{T}^{\ast }+H^{1,0}\lrcorner \,\right) \phi ,\quad \partial
_{T}^{\ast }\phi =-\bar{\ast}\partial _{B}\bar{\ast}\phi ,
\end{equation*}%
on basic $r$-forms $\phi $.
\end{proposition}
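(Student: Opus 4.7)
The strategy is to derive both identities from the already-proved de Rham analogues in Proposition \ref{basicAdjointFormulasProp} by projecting onto pure bidegree, and then obtain the second formula by a direct algebraic manipulation of adjoints.

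First, I would specialize Proposition \ref{basicAdjointFormulasProp} to codimension $q = 2n$: since $(-1)^{q(r+1)+1} = -1$, we have
\begin{equation*}
\delta_B = -\bar\ast\, d_T\, \bar\ast, \qquad \delta_T = -\bar\ast\, d_B\, \bar\ast
\end{equation*}
on basic forms. Substitute the bidegree decompositions $d_B = \partial_B + \bar\partial_B$, $d_T = \partial_T + \bar\partial_T$, and their $L^2$-adjoints $\delta_B = \partial_B^{\ast} + \bar\partial_B^{\ast}$ and $\delta_T = \partial_T^{\ast} + \bar\partial_T^{\ast}$, and match types on both sides. The operator $\partial_T$ shifts bidegree by $(1,0)$, $\bar\partial_T$ by $(0,1)$, and $\bar\ast$ has its own bidegree shift; consequently, each of the compositions $-\bar\ast\,\partial_T\,\bar\ast$ and $-\bar\ast\,\bar\partial_T\,\bar\ast$ sends $\Omega_B^{r,s}$ into a single pure-type subspace, one of which coincides with the target of $\partial_B^{\ast}$ and the other with the target of $\bar\partial_B^{\ast}$. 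This forces the identification $\partial_B^{\ast}\phi = -\bar\ast\,\partial_T\,\bar\ast\phi$, and the same reasoning applied to $\delta_T = -\bar\ast\, d_B\,\bar\ast$ yields $\partial_T^{\ast}\phi = -\bar\ast\,\partial_B\,\bar\ast\phi$.

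For the second identity, apply the pointwise Hermitian adjoint to the relation $\partial_B = \partial_T + \epsilon(\kappa_B^{1,0})$, which was established immediately before the Proposition by projecting $d_B = d_T + \epsilon(\kappa_B)$ onto type $(r+1,s)$ together with $\kappa_B = \kappa_B^{1,0} + \kappa_B^{0,1}$. This gives
\begin{equation*}
\partial_B^{\ast} = \partial_T^{\ast} + \bigl(\epsilon(\kappa_B^{1,0})\bigr)^{\ast}.
\end{equation*}
Now invoke the identity $(\epsilon(V^\flat))^{\ast} = \overline{V}\lrcorner$ for complex vector fields (noted in the excerpt just before the star operator discussion) with $V^\flat = \kappa_B^{1,0}$, so that $V = (\kappa_B^{1,0})^{\#}$ under the $\mathbb{C}$-bilinear extension of $g_Q$. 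Its conjugate is $\overline{V} = \overline{(\kappa_B^{1,0})^{\#}} = (\overline{\kappa_B^{1,0}})^{\#} = H^{1,0}$ by (\ref{HKappaFormulas}). Thus $(\epsilon(\kappa_B^{1,0}))^{\ast} = H^{1,0}\lrcorner$, which completes the formula.

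The main subtle point -- and really the only one requiring care -- is the bidegree and sign bookkeeping produced by $\bar\ast$ acting on complex forms, since one has to confirm that each summand of the split de Rham identity lands in a unique bidegree before reading off the Dolbeault adjoints. Once this is verified, no further computation is needed beyond what already appears in the proof of Proposition \ref{basicAdjointFormulasProp}, and in particular the argument is local along leaf closures and so extends to the noncompact, finite-volume setting assumed throughout this section.
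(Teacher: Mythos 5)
The paper contains no proof of this Proposition to compare against: it is quoted verbatim from \cite[Prop. 3.6]{JR2}. Measured against the argument the surrounding text clearly intends, the second half of your proposal is correct and complete: taking the formal adjoint of $\partial _{B}=\partial _{T}+\epsilon (\kappa _{B}^{1,0})$ and computing
\begin{equation*}
\left( \epsilon (\kappa _{B}^{1,0})\right) ^{\ast }=\overline{(\kappa
_{B}^{1,0})^{\#}}\lrcorner =\left( \overline{\kappa _{B}^{1,0}}\right)
^{\#}\lrcorner =H^{1,0}\lrcorner
\end{equation*}
via (\ref{HKappaFormulas}) is exactly right, including the conjugation, and your reduction of the sign $(-1)^{q(r+1)+1}=-1$ for $q=2n$ and the remark about localization to leaf closures are both fine.

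The gap is in the first half, at precisely the step you yourself flag as ``the main subtle point'' and then do not perform. Pure-type matching does \emph{not} force the stated pairing. With $\bar{\ast}$ as defined in this paper --- the relation $\phi \wedge \bar{\ast}\bar{\psi}=\langle \phi ,\psi \rangle \nu $ makes $\bar{\ast}$ complex-linear with $\bar{\ast}:\Lambda _{C}^{r,s}\rightarrow \Lambda _{C}^{n-s,n-r}$ --- one computes for $\phi \in \Omega _{B}^{r,s}(\mathcal{F})$ that
\begin{equation*}
\bar{\ast}\phi \in \Omega _{B}^{n-s,n-r},\quad \partial _{T}\bar{\ast}\phi
\in \Omega _{B}^{n-s+1,n-r},\quad \bar{\ast}\partial _{T}\bar{\ast}\phi \in
\Omega _{B}^{r,s-1},
\end{equation*}
which is the bidegree of $\bar{\partial}_{B}^{\ast }$, not of $\partial _{B}^{\ast }$; so naive type bookkeeping pairs $\partial _{B}^{\ast }$ with $-\bar{\ast}\bar{\partial}_{T}\bar{\ast}$, the \emph{opposite} of what the Proposition asserts (and the familiar Hermitian-manifold identity $\partial ^{\ast }=-\ast \bar{\partial}\ast $ for the complex-linear star confirms this). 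What reconciles the two is a conjugation hidden in $\bar{\ast}$: the paper's own identity $\bar{\ast}\epsilon (V^{\flat })\bar{\ast}=\overline{V}\lrcorner $ is conjugate-linear in $V$, which is only consistent if the $\bar{\ast}$ appearing in these operator formulas conjugates coefficients, i.e.\ is the antilinear star of type $(r,s)\rightarrow (n-r,n-s)$; equivalently $-\bar{\ast}\partial _{T}\bar{\ast}$ coincides with $-\ast \bar{\partial}_{T}\ast $ for the complex-linear transverse star $\ast $, because the two conjugations cancel across the pair of stars. A correct proof must either pin down the (anti)linearity of $\bar{\ast}$ and track the resulting conjugations explicitly, or bypass the issue by integrating by parts directly, applying Stokes' theorem and Rummler's formula (\ref{Rummler}) to $d\left( \alpha \wedge \bar{\ast}\bar{\beta}\wedge \chi _{\mathcal{F}}\right) $ with $\alpha \in \Omega _{B}^{r-1,s}$, $\beta \in \Omega _{B}^{r,s}$, and reading off $\partial _{B}^{\ast }$ from the $(n,n)$-component. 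Since the identification you assert is, as written under the conventions literally printed in this paper, the wrong one, the proof is incomplete at its crux.
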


Again, we note that $\partial _{T}^{\ast }$ is the holomorphic divergence on
the local quotient manifolds in the foliation charts, and it only depends on
the transverse metric and holomorphic structure.

Thus,%
\begin{equation*}
\partial _{T}^{\ast 2}=0.
\end{equation*}%
Also, since $\partial _{B}$ is the same as the holomorphic differential on
the local quotient manifold in the foliation charts,%
\begin{equation*}
\partial _{B}^{2}=0.
\end{equation*}%
Taking adjoints with respect to basic forms, from the three equations above
we have%
\begin{equation*}
\partial _{T}:=\left( \partial _{T}^{\ast }\right) ^{\ast }=\partial
_{B}-\epsilon \left( \kappa _{B}^{1,0}\right)
\end{equation*}%
\begin{equation*}
\partial _{T}^{2}=0,~\left( \partial _{B}^{\ast }\right) ^{2}=0.
\end{equation*}%
Then 
\begin{equation}
\partial _{T}\left( 1\right) =\left( \partial _{B}-\epsilon (\kappa
_{B}^{1,0})\right) \left( 1\right) =-\kappa _{B}^{1,0}  \label{deltaT(1)}
\end{equation}%
and%
\begin{equation*}
\partial _{B}\kappa _{B}^{1,0}=\left( \partial _{T}+\epsilon (\kappa
_{B}^{1,0})\right) \kappa _{B}^{1,0}=\partial _{T}\kappa _{B}^{1,0}=\partial
_{T}\left( -\partial _{T}1\right) =0.
\end{equation*}%
It also follows that%
\begin{equation*}
\partial _{T}\kappa _{B}^{1,0}=0.
\end{equation*}

Similarly, 
\begin{equation*}
\bar{\partial}_{B}\kappa _{B}^{0,1}=\bar{\partial}_{T}\kappa _{B}^{0,1}=0.
\end{equation*}%
Since $d\kappa _{B}=0$, we have%
\begin{equation*}
0=\left( \partial _{B}+\bar{\partial}_{B}\right) \left( \kappa
_{B}^{1,0}+\kappa _{B}^{0,1}\right) =\partial _{B}\kappa _{B}^{0,1}+\bar{%
\partial}_{B}\kappa _{B}^{1,0}.
\end{equation*}

So 
\begin{equation*}
\func{Re}\left( \partial _{B}\kappa _{B}^{0,1}\right) =0.
\end{equation*}%
We summarize these results in the following Proposition.

\begin{proposition}
\label{holomorphicKappaClosedProp}Let $\left( M,\mathcal{F},J,g_{Q}\right) $
be a foliation with a holonomy-invariant transverse complex structure and
transverse Hermitian metric. Given a bundle-like metric for $\left( M,%
\mathcal{F}\right) $ compatible with the Hermitian metric, let $\kappa
_{B}=\kappa _{B}^{1,0}+\kappa _{B}^{0,1}$ be the basic component of the mean
curvature one-form, with $\kappa _{B}^{1,0}=\frac{1}{2}\left( \kappa
_{B}+iJ\kappa _{B}\right) \in \Omega _{B}^{1,0}$, $\kappa _{B}^{0,1}=%
\overline{\kappa _{B}^{1,0}}$. Then%
\begin{eqnarray*}
\partial _{B}\kappa _{B}^{1,0} &=&\bar{\partial}_{B}\kappa _{B}^{0,1}=0;~%
\func{Re}\left( \partial _{B}\kappa _{B}^{0,1}\right) =0; \\
\partial _{T}\kappa _{B}^{1,0} &=&\bar{\partial}_{T}\kappa _{B}^{0,1}=0
\end{eqnarray*}
\end{proposition}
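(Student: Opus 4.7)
The plan is to mirror, in the Dolbeault setting, the short argument already used in Proposition \ref{AlvarezClassThm} to show $d\kappa_{B}=0$. The crucial input is that on basic forms, $\partial_{B}$ and $\bar\partial_{B}$ agree pointwise with the holomorphic/antiholomorphic differentials on the local transverse complex quotients of foliation charts, while Proposition \ref{delBstarFormulaProp} tells me that $\partial_{T}^{\ast}$ (resp.\ $\bar\partial_{T}^{\ast}$) is the holomorphic (resp.\ antiholomorphic) divergence on the same quotients. Hence $\partial_{B}^{2}=0$, $\bar\partial_{B}^{2}=0$, $(\partial_{T}^{\ast})^{2}=0$, and $(\bar\partial_{T}^{\ast})^{2}=0$. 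Taking $L^{2}$-adjoints with respect to basic forms, I conclude $\partial_{T}^{2}=0$ and $\bar\partial_{T}^{2}=0$ as operators on basic forms.

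Next I use Proposition \ref{delBstarFormulaProp} in the form $\partial_{T}=\partial_{B}-\epsilon(\kappa_{B}^{1,0})$, evaluate on the constant function $1$ to obtain $\partial_{T}(1)=-\kappa_{B}^{1,0}$, and apply $\partial_{T}$ again to get
\begin{equation*}
0=\partial_{T}^{2}(1)=-\partial_{T}\kappa_{B}^{1,0},
\end{equation*}
so $\partial_{T}\kappa_{B}^{1,0}=0$. Since $\kappa_{B}^{1,0}$ is a one-form, $\kappa_{B}^{1,0}\wedge\kappa_{B}^{1,0}=0$, and therefore
\begin{equation*}
\partial_{B}\kappa_{B}^{1,0}=\partial_{T}\kappa_{B}^{1,0}+\epsilon(\kappa_{B}^{1,0})\kappa_{B}^{1,0}=0.
\end{equation*}
The identities $\bar\partial_{T}\kappa_{B}^{0,1}=0$ and $\bar\partial_{B}\kappa_{B}^{0,1}=0$ follow by complex conjugation, or equivalently by repeating the same argument with $\bar\partial_{T}=\bar\partial_{B}-\epsilon(\kappa_{B}^{0,1})$.

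For the real-part identity, I invoke $d\kappa_{B}=0$ from Proposition \ref{AlvarezClassThm} and split by bidegree. Using $d_{B}=\partial_{B}+\bar\partial_{B}$ and $\kappa_{B}=\kappa_{B}^{1,0}+\kappa_{B}^{0,1}$, the vanishing of the $(1,1)$-component of $d\kappa_{B}$ reads $\partial_{B}\kappa_{B}^{0,1}+\bar\partial_{B}\kappa_{B}^{1,0}=0$. Because $\kappa_{B}$ is a real one-form, $\kappa_{B}^{0,1}=\overline{\kappa_{B}^{1,0}}$ and complex conjugation sends the first summand to the second, giving $\partial_{B}\kappa_{B}^{0,1}=-\overline{\partial_{B}\kappa_{B}^{0,1}}$, hence $\operatorname{Re}(\partial_{B}\kappa_{B}^{0,1})=0$.

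There is no genuine obstacle here; the only point that deserves care is the assertion that $\partial_{B}$ and $\partial_{T}^{\ast}$ are local pullbacks of the corresponding operators on the transverse complex quotient, so that their squares vanish and adjoint identities transfer to basic forms. This is essentially built into the definitions once Proposition \ref{delBstarFormulaProp} is in hand, and the derivation above is then a direct transcription of the proof of Proposition \ref{AlvarezClassThm} into the holomorphic/antiholomorphic setting.
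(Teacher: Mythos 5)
Your proof is correct and is essentially the same as the paper's: both arguments use $\partial_{T}=\partial_{B}-\epsilon(\kappa_{B}^{1,0})$ together with $\partial_{T}^{2}=0$ (obtained by taking adjoints of the local-quotient identity $(\partial_{T}^{\ast})^{2}=0$), evaluate on the constant function $1$ to get $\partial_{T}(1)=-\kappa_{B}^{1,0}$, and then obtain $\operatorname{Re}\left(\partial_{B}\kappa_{B}^{0,1}\right)=0$ by splitting $d\kappa_{B}=0$ into bidegree components and conjugating. The only cosmetic difference is the order of deduction: you first get $\partial_{T}\kappa_{B}^{1,0}=0$ from $\partial_{T}^{2}(1)=0$ and then pass to $\partial_{B}\kappa_{B}^{1,0}=0$ via $\kappa_{B}^{1,0}\wedge\kappa_{B}^{1,0}=0$, whereas the paper runs the same computation in the reverse direction.
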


We do not expect that $\partial _{B}\kappa _{B}^{0,1}=-\bar{\partial}%
_{B}\kappa _{B}^{1,0}\in \Omega _{B}^{1,1}\left( \mathcal{F}\right) $ would
be in general zero for any metric. Consider Example \ref%
{KaehlerExactMeanCurv}. In the next section we will examine this form more
closely.

\begin{theorem}
\label{holomorphicAlvarezClass}\vspace{0in}Let $\left( M,\mathcal{F}%
,J,g_{Q}\right) $ be a foliation with compact leaf closures with a
holonomy-invariant transverse complex structure and transverse Hermitian
metric. Given a bundle-like metric for $\left( M,\mathcal{F}\right) $
compatible with the Hermitian metric, let $\kappa _{B}=\kappa
_{B}^{1,0}+\kappa _{B}^{0,1}$ be the basic component of the mean curvature
one-form, with $\kappa _{B}^{1,0}\in \Omega _{B}^{1,0}$, $\kappa _{B}^{0,1}=%
\overline{\kappa _{B}^{1,0}}$. Then the cohomology classes of $\kappa
_{B}^{1,0}$ and $\kappa _{B}^{0,1}$ in $H_{\partial _{B}}^{1,0}\left( 
\mathcal{F}\right) $ and $H_{\bar{\partial}_{B}}^{0,1}\left( \mathcal{F}%
\right) $, respectively, are invariant with respect to the choice of
bundle-like metric and transverse metric compatible with the holomorphic
structure.
\end{theorem}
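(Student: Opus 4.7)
The plan is to reduce the claim to the invariance of the Álvarez class proved in Proposition \ref{AlvarezClassThm}. The key observation I would exploit is that the type decomposition $\kappa_B = \kappa_B^{1,0} + \kappa_B^{0,1}$ depends only on the transverse complex structure $J$, which is held fixed throughout: for any real basic one-form $\alpha$, the $(1,0)$-projection is $\frac{1}{2}(\alpha + iJ\alpha)$ with $J$ acting on one-forms by $(J\alpha)(X) = -\alpha(JX)$, and this formula involves neither the bundle-like metric $g_M$ nor the compatible transverse Hermitian metric $g_Q$.

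I would then proceed as follows. Given two choices of data, let $\kappa_B$ and $\kappa_B'$ denote the corresponding basic mean curvature one-forms. By Proposition \ref{AlvarezClassThm}, the Álvarez class $[\kappa_B] \in H_B^1(\mathcal{F})$ is independent of both $g_M$ and $g_Q$, so $\kappa_B$ and $\kappa_B'$ represent the same basic de Rham class; hence there exists a basic function $f \in \Omega_B^0(\mathcal{F})$ with $\kappa_B' - \kappa_B = df$. Projecting this identity onto types gives
\begin{equation*}
(\kappa_B')^{1,0} - \kappa_B^{1,0} = \partial_B f, \qquad (\kappa_B')^{0,1} - \kappa_B^{0,1} = \bar{\partial}_B f,
\end{equation*}
so the two differences are $\partial_B$-exact and $\bar{\partial}_B$-exact respectively. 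Combined with Proposition \ref{holomorphicKappaClosedProp}, which guarantees that $\kappa_B^{1,0}$ and $\kappa_B^{0,1}$ are $\partial_B$-closed and $\bar{\partial}_B$-closed (so that the classes are well defined), this would yield the desired equalities in $H_{\partial_B}^{1,0}(\mathcal{F})$ and $H_{\bar{\partial}_B}^{0,1}(\mathcal{F})$.

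The only point that needs emphasis is that Proposition \ref{AlvarezClassThm} is invoked with variation in $g_Q$ as well as $g_M$; fortunately it is stated in exactly that generality, and the compatible Hermitian metrics we range over form a subfamily of all transverse metrics, so no extra work is required. If the $g_Q$-invariance were not already available, the natural substitute would be to connect any two compatible Hermitian metrics $g_Q^0, g_Q^1$ by the convex path $g_Q^t = (1-t)g_Q^0 + t g_Q^1$ (which remains $J$-Hermitian), lift to a smooth family of bundle-like extensions, and compute $\frac{d}{dt}\kappa_B^{1,0}(t)$ directly to show it is $\partial_B$-exact; but this longer route is not needed here.
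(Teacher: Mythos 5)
Your proposal is correct and is essentially the paper's own argument: both invoke Proposition \ref{AlvarezClassThm} to write $\kappa_B' = \kappa_B + df$ for a basic function $f$, then apply the metric-independent type projection $\Pi^{1,0}:\alpha\mapsto\frac{1}{2}(\alpha+iJ\alpha)$ together with the identity $\Pi^{1,0}d = \partial_B$ on basic functions to conclude $(\kappa_B^{1,0})' = \kappa_B^{1,0} + \partial_B f$, and conjugate for $\kappa_B^{0,1}$. Your explicit citation of Proposition \ref{holomorphicKappaClosedProp} for well-definedness of the classes is a point the paper leaves implicit, but the substance of the proof is the same.
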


\begin{proof}
By \cite{Al} (Proposition \ref{AlvarezClassThm}), any change of compatible
bundle-like metric and transverse metric changes $\kappa _{B}$ to $\kappa
_{B}^{\prime }=\kappa _{B}+df$ for some basic function $f$. Then $\left(
\kappa _{B}^{1,0}\right) ^{\prime }=\frac{1}{2}\left( \kappa _{B}+iJ\kappa
_{B}\right) +\frac{1}{2}\left( df+iJdf\right) $. Using local coordinates,
one can show that on real-valued basic functions $\partial _{B}=\Pi ^{1,0}d$
where $\Pi ^{1,0}:\Omega _{B}^{1}\left( \mathcal{F}\right) \rightarrow
\Omega _{B}^{1,0}\left( \mathcal{F}\right) $ is the projection $\alpha
\mapsto \frac{1}{2}\left( \alpha +iJ\alpha \right) $, we have $\left( \kappa
_{B}^{1,0}\right) ^{\prime }=\kappa _{B}^{1,0}+\partial _{B}f$. Thus $\left[
\kappa _{B}^{1,0}\right] \in H_{\partial _{B}}^{1,0}\left( \mathcal{F}%
\right) $ is independent of the choice of the metric choices. The analogous
proof for $\kappa _{B}^{0,1}$ is similar.
\end{proof}

\begin{remark}
As in Corollary \ref{harmonicMeanCurvCorollary}, we can may multiply the
metric along the leaves by a conformal factor to yield any possible element $%
\left( \kappa _{B}^{1,0}\right) ^{\prime }\in \left[ \kappa _{B}^{1,0}\right]
$.
\end{remark}

\begin{remark}
Because $\kappa _{B}^{1,0}=\frac{1}{2}\left( \kappa _{B}+iJ\kappa
_{B}\right) $ and $\kappa _{B}=\kappa _{B}^{1,0}+\overline{\kappa _{B}^{1,0}}
$, $M$ is taut (i.e. $\kappa _{B}$ is $d_{B}$-exact) if and only if $\left[
\kappa _{B}^{1,0}\right] $ is trivial.
\end{remark}

\begin{proposition}
\label{DomGen}Let $\left( M,\mathcal{F},J,g_{Q}\right) $ be a foliation with
compact leaf closures with a holonomy-invariant transverse complex structure
and transverse Hermitian metric. Then there exists a bundle-like metric
compatible with the transverse Hermitian metric, such that $\kappa ^{1,0}$
and $\kappa ^{0,1}$ are basic forms; that is, $\kappa ^{1,0}=\kappa
_{B}^{1,0}$ and $\kappa ^{0,1}=\kappa _{B}^{0,1}$.
\end{proposition}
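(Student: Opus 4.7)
The plan is to combine Dom\'{\i}nguez's tenseness theorem (Proposition~\ref{tensenessTheorem}) with the holonomy invariance of $J$. The idea is that once the bundle-like metric is chosen so that $\kappa$ itself is basic, the transverse complex structure automatically splits it into basic $(1,0)$- and $(0,1)$-components.

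First, I would apply Proposition~\ref{tensenessTheorem} to the underlying Riemannian foliation $(M,g_{Q},\mathcal{F})$ to obtain a bundle-like metric whose transverse component is still $g_{Q}$ and whose mean curvature form $\kappa$ is basic, i.e.\ $\kappa=\kappa_{B}$. Since Dom\'{\i}nguez's construction rescales only the leafwise part of the metric and leaves $g_{Q}$ untouched, the new bundle-like metric is still compatible with the given transverse Hermitian structure. This is the one piece of nontrivial input; everything else is a structural consequence of holonomy invariance.

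Next, I would observe that for any basic one-form $\alpha\in\Omega_{B}^{1}(\mathcal{F})$, the form $J\alpha$ defined by $(J\alpha)(X)=-\alpha(JX)$ is again basic. The reason is that $\mathcal{L}_{X}J=0$ for all $X\in\Gamma(T\mathcal{F})$, so $J$ descends to each local leaf-quotient and commutes with the basic projection $P$. Applying this to $\alpha=\kappa=\kappa_{B}$, the form $J\kappa$ is basic, and hence so are
\[
\kappa^{1,0}=\tfrac{1}{2}(\kappa+iJ\kappa),\qquad \kappa^{0,1}=\tfrac{1}{2}(\kappa-iJ\kappa).
\]
Using $P\kappa=\kappa$ and $PJ=JP$, this yields
\[
\kappa_{B}^{1,0}=\tfrac{1}{2}(\kappa_{B}+iJ\kappa_{B})=\tfrac{1}{2}(\kappa+iJ\kappa)=\kappa^{1,0},
\]
and analogously $\kappa_{B}^{0,1}=\kappa^{0,1}$, as required.

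There is no serious obstacle here; the only conceptual point to verify is that $J$ preserves the space of basic one-forms, which is immediate from the holonomy invariance of the transverse complex structure. Once Dom\'{\i}nguez's construction is used to make $\kappa$ itself basic, the transverse $(1,0)/(0,1)$ decomposition of $\kappa$ is automatically compatible with basicness, so no additional modification of the metric is needed.
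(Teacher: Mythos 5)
Your proof is correct and follows essentially the same route as the paper: apply Dom\'{\i}nguez's tenseness theorem (Proposition~\ref{tensenessTheorem}) to make $\kappa$ basic without altering $g_{Q}$, then take the $(1,0)$ and $(0,1)$ projections. The paper states this in two lines, leaving implicit what you verify explicitly --- that holonomy invariance of $J$ makes these projections preserve basicness --- so your write-up is simply a more detailed version of the same argument.
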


\begin{proof}
By \cite{Dom} (Proposition \ref{tensenessTheorem}), there exists a
bundle-like metric that does not change the transverse structure such that $%
\kappa =\kappa _{B}$. Now apply the projections $\Pi ^{1,0}$ and $\Pi ^{0,1}$%
.
\end{proof}

Let%
\begin{equation*}
\square _{B}=\partial _{B}^{\ast }\partial _{B}+\partial _{B}\partial
_{B}^{\ast },~\overline{\square }_{B}=\bar{\partial}_{B}^{\ast }\bar{\partial%
}_{B}+\bar{\partial}_{B}\bar{\partial}_{B}^{\ast }
\end{equation*}%
On a closed manifold $M$, it is clear that a basic form $\alpha $ satisfies $%
\square _{B}\alpha =0$ if and only if $\partial _{B}\alpha =0$ and $\partial
_{B}^{\ast }\alpha =0$. Also, if $\alpha \in \Omega _{B}^{r,0}\left( 
\mathcal{F}\right) $, automatically $\bar{\partial}_{B}^{\ast }\alpha =0$,
so that $\partial _{B}^{\ast }\alpha =0$ implies that $\delta _{B}\alpha
=\left( \partial _{B}^{\ast }+\bar{\partial}_{B}^{\ast }\right) \alpha =0$,
where $\delta _{B}$ is the adjoint of $d$ restricted to basic complex-valued
forms.

\begin{lemma}
\label{realOpLemma}Let $\left( M,\mathcal{F},J,g_{Q}\right) $ be a
transverse K\"{a}hler foliation on a closed manifold. Then for any
complex-valued basic function $f$, 
\begin{equation*}
\delta _{T}df=2\partial _{T}^{\ast }\partial _{B}f=2\bar{\partial}_{T}^{\ast
}\bar{\partial}_{B}f.
\end{equation*}%
In particular, the operator $\partial _{T}^{\ast }\partial _{B}$ is a real
operator on functions.
\end{lemma}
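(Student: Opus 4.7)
The plan is to reduce the lemma to the classical pointwise Kähler identity for Laplacians on functions, applied on the local quotient manifolds of the foliation charts, and then to obtain the reality statement as an immediate consequence.

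First, I would recall that on basic forms the operators $d_B$, $\partial_B$, $\bar{\partial}_B$ act as the pullbacks, via the foliation-chart submersion, of $d$, $\partial$, $\bar{\partial}$ on the local quotient, while by Lemma \ref{divergenceLemma} and the remark following Proposition \ref{delBstarFormulaProp} the operators $\delta_T$, $\partial_T^*$, $\bar{\partial}_T^*$ are the pullbacks of the corresponding codifferentials $d^*$, $\partial^*$, $\bar{\partial}^*$. Because the foliation is transverse Kähler, each local quotient is a genuine Kähler manifold, so the standard pointwise Kähler identity $\Delta = 2\square = 2\bar{\square}$ is available there.

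Applying this identity to a complex-valued function $g$ on the local quotient, one has $d^*g = \partial^* g = \bar{\partial}^* g = 0$ by degree, so
\begin{equation*}
\Delta g = d^* dg, \qquad \square g = \partial^* \partial g, \qquad \bar{\square} g = \bar{\partial}^*\bar{\partial} g,
\end{equation*}
and the Kähler identity specializes to the pointwise equality $d^*dg = 2\partial^*\partial g = 2\bar{\partial}^*\bar{\partial} g$ on the quotient. Pulling back this equality under the foliation-chart submersion and noting that for a basic function $f$ one has $df = d_B f$, I obtain
\begin{equation*}
\delta_T df = 2\partial_T^*\partial_B f = 2\bar{\partial}_T^*\bar{\partial}_B f,
\end{equation*}
which is the first assertion.

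For the reality statement, I would use that on the local quotient both $d$ and $d^*$ send real-valued forms to real-valued forms, so their pullbacks $d_B$ and $\delta_T$ commute with complex conjugation. Hence $\partial_T^*\partial_B = \tfrac{1}{2}\delta_T d$ is a real operator on functions, meaning $\overline{\partial_T^*\partial_B f} = \partial_T^*\partial_B \bar{f}$ for every basic function $f$. There is no substantial obstacle here: the only point requiring genuine care is the identification of each basic operator with its pullback from the local Kähler quotient, after which the classical pointwise Kähler identity on functions does all of the work, and no global integration by parts on $M$ or hypothesis on the mean curvature beyond the transverse Kähler condition is needed.
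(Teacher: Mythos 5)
Your proposal is correct and follows essentially the same route as the paper's proof: both identify $\delta_T$, $\partial_T^{\ast}$, $\bar{\partial}_T^{\ast}$ with the codifferentials $d^{\ast}$, $\partial^{\ast}$, $\bar{\partial}^{\ast}$ on the local Kähler quotients of the foliation charts and invoke the classical local identity $\Delta_d = 2\Delta_{\partial} = 2\Delta_{\bar{\partial}}$ there. Your write-up merely adds useful detail the paper leaves implicit — that the codifferentials vanish on functions by degree, and that reality of $\partial_T^{\ast}\partial_B = \tfrac{1}{2}\delta_T d$ follows since $d$ and $\delta_T$ commute with conjugation.
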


\begin{proof}
Since $\delta _{T}$, $\partial _{T}^{\ast }$ and $\bar{\partial}_{T}^{\ast }$
correspond to the divergences $d^{\ast }$, $\partial ^{\ast }$, $\bar{%
\partial}^{\ast }$ on the local quotient manifold, this Lemma follows
directly from the local fact that $\Delta _{d}=2\Delta _{\partial }=2\Delta
_{\bar{\partial}}$ on K\"{a}hler manifolds.
\end{proof}

\vspace{0in}With our string of successes of projecting using $\Pi ^{r,s}$,
one would hope that an analogue of Proposition \ref{harmonicMeanCurvThm} can
be found just as easily. However, as the following remark shows, we are not
so lucky.

\begin{remark}
Let $\left( M,\mathcal{F},J,g_{Q}\right) $ be a foliation on a closed
manifold with a holonomy-invariant transverse complex structure and
transverse Hermitian metric. By Proposition \ref{harmonicMeanCurvThm},
choose $g_{M}$ to be a bundle-like metric on $M$ compatible with the
transverse Hermitian structure and chosen so that the mean curvature $\kappa 
$ is basic harmonic --- that is, so that $\kappa =\kappa _{B}$, $\delta
_{B}\kappa =0$. Then observe that%
\begin{eqnarray*}
0 &=&\delta _{B}\kappa \\
&=&\left( \partial _{B}^{\ast }+\bar{\partial}_{B}^{\ast }\right) \left(
\kappa ^{1,0}+\kappa ^{0,1}\right) \\
&=&\partial _{B}^{\ast }\kappa ^{1,0}+\overline{\partial _{B}^{\ast }\kappa
^{1,0}}=2\func{Re}\left( \partial _{B}^{\ast }\kappa ^{1,0}\right) .
\end{eqnarray*}%
Hence, Proposition \ref{harmonicMeanCurvThm} gives us no control over the
imaginary part of $\partial _{B}^{\ast }\kappa ^{1,0}$. On the other hand,
suppose we are able to find a bundle-like metric $g_{M}$ such that 
\begin{equation*}
\partial _{B}^{\ast }\kappa ^{1,0}=0.
\end{equation*}%
Then by the calculation above, $0=\delta _{B}\kappa $, so in fact $\kappa $
is basic harmonic. However, by Corollary \ref{harmonicMeanCurvCorollary},
the leafwise volume form $\chi _{\mathcal{F}}$ is determined up to a
constant scale factor, so the form $\kappa $ is uniquely determined.
\end{remark}

From the discussion in the remark above we at least have the following.

\begin{proposition}
Let $\left( M,\mathcal{F},J,g_{Q}\right) $ be a foliation on a closed
manifold with a holonomy-invariant transverse complex structure and
transverse Hermitian metric. Suppose that there exists a bundle-like metric
compatible with the transverse structure such that $\partial _{B}^{\ast
}\kappa ^{1,0}=0$. Then the mean curvature $\kappa $ is basic harmonic, and $%
\kappa $ is the unique element of $\left[ \kappa \right] \in $ $%
H_{B}^{1}\left( \mathcal{F}\right) $ with this property.
\end{proposition}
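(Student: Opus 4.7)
The plan is to follow the logic already sketched in the preceding remark and make the two required assertions precise. Since $\partial_B^\ast$ is only defined on basic forms, the hypothesis $\partial_B^\ast \kappa^{1,0}=0$ presupposes that $\kappa^{1,0}$ is basic. By complex conjugation $\kappa^{0,1}=\overline{\kappa^{1,0}}$ is then also basic, so $\kappa=\kappa^{1,0}+\kappa^{0,1}$ is basic and in fact $\kappa=\kappa_B$.

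Next I would compute $\delta_B\kappa$ using the decomposition $\delta_B=\partial_B^\ast+\bar\partial_B^\ast$. Because $\partial_B^\ast$ sends $(r,s)$-forms to $(r-1,s)$-forms it annihilates $\Omega_B^{0,1}(\mathcal{F})$, and similarly $\bar\partial_B^\ast$ annihilates $\Omega_B^{1,0}(\mathcal{F})$, so only two terms survive. Using the general identity $\overline{\partial_B^\ast \alpha}=\bar\partial_B^\ast\overline{\alpha}$ (which follows from conjugating the defining adjoint relation and the fact that $\overline{\partial_B\beta}=\bar\partial_B\overline{\beta}$), we then get
\begin{equation*}
\delta_B\kappa=\partial_B^\ast\kappa^{1,0}+\bar\partial_B^\ast\kappa^{0,1}=\partial_B^\ast\kappa^{1,0}+\overline{\partial_B^\ast\kappa^{1,0}}=2\func{Re}\bigl(\partial_B^\ast\kappa^{1,0}\bigr)=0.
\end{equation*}
Combined with $d\kappa=d\kappa_B=0$ from Proposition \ref{AlvarezClassThm}, this shows that $\kappa$ is basic harmonic.

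For uniqueness, I would invoke Corollary \ref{harmonicMeanCurvCorollary}: every element of the \'Alvarez class $[\kappa]=[\kappa_B]\in H_B^1(\mathcal{F})$ can be realized as the mean curvature form for a suitable bundle-like metric compatible with $g_Q$, and the basic harmonic representative is uniquely characterized as the element of the class of minimum $L^2$-norm. Hence $\kappa$ is the unique element of $[\kappa]$ that is basic harmonic.

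The only nontrivial bookkeeping is the type-degree vanishing argument for the cross terms in $\delta_B\kappa$ together with the conjugation identity for $\bar\partial_B^\ast$; everything else reduces to direct citation of earlier results in the paper, so I do not anticipate any serious obstacle.
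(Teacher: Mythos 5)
Your proposal is correct and takes essentially the same approach as the paper, whose proof (contained in the remark immediately preceding the proposition) likewise computes $\delta _{B}\kappa =\partial _{B}^{\ast }\kappa ^{1,0}+\overline{\partial _{B}^{\ast }\kappa ^{1,0}}=2\func{Re}\left( \partial _{B}^{\ast }\kappa ^{1,0}\right) =0$ and then cites Corollary \ref{harmonicMeanCurvCorollary} for the uniqueness of the basic harmonic representative realized as a mean curvature. Your only addition is to make explicit that the hypothesis forces $\kappa ^{1,0}$, hence $\kappa ^{0,1}=\overline{\kappa ^{1,0}}$ and therefore $\kappa =\kappa _{B}$, to be basic --- a point the paper leaves implicit --- which is a harmless and accurate clarification.
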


If $\left( M,\mathcal{F},J,g_{Q}\right) $ is transversely K\"{a}hler, then
the situation of the previous proposition always occurs.

\begin{proposition}
\label{harmonicMeanCurvKahlerThm}Let $\left( M,\mathcal{F},J,g_{Q}\right) $
be a transverse K\"{a}hler foliation on a closed manifold. Then there exists
a bundle-like metric compatible with the K\"{a}hler structure such that $%
\kappa $ is basic harmonic; that is, $\kappa =\kappa _{B}$ and $\delta
_{B}\kappa =0$. For that same metric,%
\begin{equation*}
\partial _{B}^{\ast }\kappa ^{1,0}=\delta _{B}\kappa ^{1,0}=0,
\end{equation*}%
so that also 
\begin{equation*}
\square _{B}\kappa ^{1,0}=0,~\bar{\partial}_{B}^{\ast }\kappa ^{0,1}=\delta
_{B}\kappa ^{0,1}=0,~\overline{\square }_{B}\kappa ^{0,1}=0.
\end{equation*}
\end{proposition}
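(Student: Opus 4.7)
The plan is to first apply Proposition \ref{harmonicMeanCurvThm} to obtain a bundle-like metric compatible with the transverse K\"ahler structure such that $\kappa =\kappa _{B}$ and $\delta _{B}\kappa =0$; the rescaling used there modifies only the leafwise metric and hence preserves the K\"ahler structure on the normal bundle.

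The heart of the argument is then showing $\partial _{B}^{\ast }\kappa _{B}^{1,0}=0$, which I would obtain by separately handling the real and imaginary parts. Decomposing $\delta _{B}\kappa =0$ into $(r,s)$-types, only the scalar pieces $\partial _{B}^{\ast }\kappa _{B}^{1,0}$ and $\bar{\partial}_{B}^{\ast }\kappa _{B}^{0,1}=\overline{\partial _{B}^{\ast }\kappa _{B}^{1,0}}$ survive, so $\delta _{B}\kappa =2\operatorname{Re}(\partial _{B}^{\ast }\kappa _{B}^{1,0})=0$, which kills the real part.

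For the imaginary part, I would invoke the transverse K\"ahler identities
\[
\partial _{T}^{\ast }=i[\Lambda ,\bar{\partial}_{B}],\qquad \bar{\partial}_{T}^{\ast }=-i[\Lambda ,\partial _{B}],
\]
which hold because $\partial _{T}^{\ast }$ and $\bar{\partial}_{T}^{\ast }$ are the standard $\partial ^{\ast }$ and $\bar{\partial}^{\ast }$ on the local K\"ahler quotients of the foliation charts. Since $\Lambda $ annihilates $\kappa _{B}^{1,0}$ and $\kappa _{B}^{0,1}$ by bidegree, these identities reduce to $\partial _{T}^{\ast }\kappa _{B}^{1,0}=i\Lambda \bar{\partial}_{B}\kappa _{B}^{1,0}$ and $\bar{\partial}_{T}^{\ast }\kappa _{B}^{0,1}=-i\Lambda \partial _{B}\kappa _{B}^{0,1}$. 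The $(1,1)$-component of $d\kappa =0$ reads $\bar{\partial}_{B}\kappa _{B}^{1,0}+\partial _{B}\kappa _{B}^{0,1}=0$, so the two expressions cancel and $\partial _{T}^{\ast }\kappa _{B}^{1,0}=\bar{\partial}_{T}^{\ast }\kappa _{B}^{0,1}=\overline{\partial _{T}^{\ast }\kappa _{B}^{1,0}}$ is a real function. Combining with Proposition \ref{delBstarFormulaProp}, $\partial _{B}^{\ast }\kappa _{B}^{1,0}=\partial _{T}^{\ast }\kappa _{B}^{1,0}+H^{1,0}\lrcorner \kappa _{B}^{1,0}$, and a short computation in a local $J$-unitary frame identifies $H^{1,0}\lrcorner \kappa _{B}^{1,0}$ with the real function $\tfrac{1}{2}|\kappa |^{2}$. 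Hence $\partial _{B}^{\ast }\kappa _{B}^{1,0}$ is real, and combined with the vanishing of its real part I conclude $\partial _{B}^{\ast }\kappa _{B}^{1,0}=0$.

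The remaining identities follow immediately. By bidegree, $\bar{\partial}_{B}^{\ast }\kappa _{B}^{1,0}=0$, so $\delta _{B}\kappa _{B}^{1,0}=0$; combined with $\partial _{B}\kappa _{B}^{1,0}=0$ from Proposition \ref{holomorphicKappaClosedProp}, this yields $\square _{B}\kappa _{B}^{1,0}=0$, and the analogous assertions for $\kappa _{B}^{0,1}$ are obtained by complex conjugation. The main obstacle is the imaginary-part vanishing; this is exactly where the K\"ahler (as opposed to merely Hermitian) hypothesis enters the argument, through the K\"ahler identities above.
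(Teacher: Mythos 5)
Your proof is correct, and its outer layers coincide with the paper's: the metric is chosen via Proposition \ref{harmonicMeanCurvThm} (which is indeed compatible with the transverse structure, hence with K\"ahlerness), the real part is killed by $\delta _{B}\kappa _{B}=\partial _{B}^{\ast }\kappa _{B}^{1,0}+\overline{\partial _{B}^{\ast }\kappa _{B}^{1,0}}=2\func{Re}\left( \partial _{B}^{\ast }\kappa _{B}^{1,0}\right) $, and the endgame ($\bar{\partial}_{B}^{\ast }\kappa _{B}^{1,0}=0$ by bidegree, $\partial _{B}\kappa _{B}^{1,0}=0$ from Proposition \ref{holomorphicKappaClosedProp}, conjugation for the $(0,1)$ statements) is the same. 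Where you genuinely diverge is the mechanism for the imaginary part. The paper's route is Lemma \ref{realOpLemma}: transverse K\"ahlerness makes $\partial _{T}^{\ast }\partial _{B}$ a real operator on basic functions (locally $\Delta _{d}=2\Delta _{\partial }$), hence its adjoint $\partial _{B}^{\ast }\partial _{T}$ is real, and applying it to the constant $1$ via $\partial _{T}(1)=-\kappa _{B}^{1,0}$ (equation (\ref{deltaT(1)})) shows in two lines that $\partial _{B}^{\ast }\kappa _{B}^{1,0}$ is real-valued, with no pointwise computation. You instead apply the commutation identities of Corollary \ref{LCor} directly to $\kappa _{B}^{1,0}$: since $\Lambda $ annihilates one-forms, $\partial _{T}^{\ast }\kappa _{B}^{1,0}=\pm i\Lambda \bar{\partial}_{B}\kappa _{B}^{1,0}$, and the $(1,1)$-component $\bar{\partial}_{B}\kappa _{B}^{1,0}+\partial _{B}\kappa _{B}^{0,1}=0$ of $d\kappa _{B}=0$ gives $\partial _{T}^{\ast }\kappa _{B}^{1,0}=\bar{\partial}_{T}^{\ast }\kappa _{B}^{0,1}=\overline{\partial _{T}^{\ast }\kappa _{B}^{1,0}}$, hence real; then Proposition \ref{delBstarFormulaProp} together with your computation $H^{1,0}\lrcorner \kappa _{B}^{1,0}=\frac{1}{2}\left\vert \kappa _{B}\right\vert ^{2}$ (which I verified: the cross terms vanish because $\left\langle \kappa _{B},J\kappa _{B}\right\rangle =0$) transfers realness to $\partial _{B}^{\ast }\kappa _{B}^{1,0}$. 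Both arguments draw the K\"ahler input from the same well --- the identities on the local K\"ahler quotients --- but the paper's is shorter and more abstract, while yours is more explicit and yields useful byproducts: $\partial _{T}^{\ast }\kappa _{B}^{1,0}=\bar{\partial}_{T}^{\ast }\kappa _{B}^{0,1}$ is real for any compatible metric, and in the harmonic metric $\partial _{T}^{\ast }\kappa _{B}^{1,0}=-\frac{1}{2}\left\vert \kappa _{B}\right\vert ^{2}$. One small correction: the identities you quote, $\partial _{T}^{\ast }=i[\Lambda ,\bar{\partial}_{B}]$ and $\bar{\partial}_{T}^{\ast }=-i[\Lambda ,\partial _{B}]$, are the negatives of the paper's Corollary \ref{LCor}, which reads $[\Lambda ,\bar{\partial}_{B}]=i\partial _{T}^{\ast }$ and $[\Lambda ,\partial _{B}]=-i\bar{\partial}_{T}^{\ast }$; since each pair consists of mutually conjugate identities, the overall sign cancels in your comparison of $\partial _{T}^{\ast }\kappa _{B}^{1,0}$ with $\bar{\partial}_{T}^{\ast }\kappa _{B}^{0,1}$ and your conclusion stands, but you should align the statement with the paper's conventions.
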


\begin{proof}
Let the bundle-like metric be chosen as in Proposition \ref%
{harmonicMeanCurvThm}, so that $\kappa =\kappa _{B}$ and $\delta _{B}\kappa
=0$. Since the foliation is transversely K\"{a}hler, $\partial _{T}^{\ast
}\partial _{B}$ on functions is a real operator, as is its adjoint $\partial
_{B}^{\ast }\partial _{T}$, by Lemma \ref{realOpLemma}. But then%
\begin{equation*}
\partial _{B}^{\ast }\partial _{T}\left( 1\right) =-\partial _{B}^{\ast
}\kappa _{B}^{1,0}
\end{equation*}%
is a real-valued function, so that%
\begin{eqnarray*}
\partial _{B}^{\ast }\kappa _{B}^{1,0} &=&\frac{1}{2}\left( \partial
_{B}^{\ast }\kappa _{B}^{1,0}+\overline{\partial _{B}^{\ast }\kappa
_{B}^{1,0}}\right) \\
&=&\frac{1}{2}\left( \partial _{B}^{\ast }\kappa _{B}^{1,0}+\bar{\partial}%
_{B}^{\ast }\kappa _{B}^{0,1}\right) \\
&=&\frac{1}{2}\left( \delta _{B}\kappa _{B}^{1,0}+\delta _{B}\kappa
_{B}^{0,1}\right) =\frac{1}{2}\delta _{B}\kappa _{B}=0.
\end{eqnarray*}
\end{proof}

\begin{remark}
After examining the proof in \cite{MO} (Proposition \ref{harmonicMeanCurvThm}%
), it does not appear that Proposition \ref{harmonicMeanCurvKahlerThm} is
true in the more general transverse Hermitian foliation case. Finding such a
metric is tantamount to finding a smooth, positive basic function $\psi $
such that 
\begin{equation*}
\partial _{B}^{\ast }\partial _{T}\psi =0.
\end{equation*}%
From the original proof, there is a $\psi $ that is unique up to a
multiplicative constant such that 
\begin{equation*}
\func{Re}\left( \partial _{B}^{\ast }\partial _{T}\psi \right) =0.
\end{equation*}%
However, it seems unlikely that the imaginary part would also vanish for
this $\psi $.
\end{remark}

For future use, we recall the Hodge theorem for basic Dolbeault cohomology.

\begin{theorem}
(Proved in \cite[Th\'eor\`eme 3.3.3]{EK} for general tranversely Hermitian
foliations on a compact manifold, stated in general in \cite[Theorem 3.21]%
{JR2}) \label{DolbeaultHodge}Let $(M,\mathcal{F},J,g_{Q})$ be a transverse K%
\"{a}hler foliation on a compact Riemannian manifold with bundle-like metric 
$g_{M}$. Then 
\begin{equation*}
\Omega _{B}^{r,s}(\mathcal{F})\cong \mathcal{H}_{B}^{r,s}\oplus \mathrm{Im}%
\bar{\partial}_{B}\oplus \mathrm{Im}\bar{\partial}_{B}^{\ast },
\end{equation*}%
where $\mathcal{H}_{B}^{r,s}=\mathrm{Ker}\overline{\square }_{B}$ is finite
dimensional. Moreover, $\mathcal{H}_{B}^{r,s}\cong H_{B}^{r,s}$.
\end{theorem}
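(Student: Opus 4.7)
The plan is to leverage the transversal ellipticity of $\overline{\square}_B$ and apply Hodge theory for transversally elliptic operators on Riemannian foliations, which is the framework developed by El Kacimi-Alaoui for precisely such questions.

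First I would verify that $\overline{\square}_B$, viewed as an operator on $\Omega_B^{r,s}(\mathcal{F})$, is a second-order, transversally elliptic, formally self-adjoint operator. Formal self-adjointness with respect to $\ll\cdot,\cdot\gg$ is immediate from the definition. For transversal ellipticity, the formula from Proposition \ref{FormulasForBoxB},
\begin{equation*}
\overline{\square}_B = \Delta_{\bar{\partial}}^Q + \bar{\partial}_B \circ H^{0,1}\lrcorner + H^{0,1}\lrcorner \circ \bar{\partial}_B,
\end{equation*}
shows that in a foliation chart $\overline{\square}_B$ equals the Dolbeault Laplacian on the local quotient plus zeroth-order perturbations. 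Its principal symbol therefore coincides with that of $\Delta_{\bar{\partial}}^Q$, which is positive definite in the transverse directions.

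Next I would invoke the spectral theory of transversally elliptic self-adjoint operators on Riemannian foliations with compact leaf closures: on compact $M$, $\overline{\square}_B$ admits a self-adjoint extension on the $L^2$-closure of $\Omega_B^{r,s}(\mathcal{F})$ with compact resolvent, discrete spectrum, and finite-dimensional eigenspaces. In particular $\mathcal{H}_B^{r,s} = \ker \overline{\square}_B$ is finite-dimensional, and by transverse elliptic regularity combined with the averaging property of the basic projection $P$ its elements are smooth basic forms. The resulting orthogonal decomposition
\begin{equation*}
L^2\Omega_B^{r,s} = \mathcal{H}_B^{r,s} \oplus \overline{\mathrm{Im}\,\overline{\square}_B}
\end{equation*}
refines to the smooth statement $\Omega_B^{r,s}(\mathcal{F}) = \mathcal{H}_B^{r,s} \oplus \mathrm{Im}\,\bar{\partial}_B \oplus \mathrm{Im}\,\bar{\partial}_B^*$ because $\overline{\square}_B = \bar{\partial}_B \bar{\partial}_B^* + \bar{\partial}_B^* \bar{\partial}_B$ and the images of these two terms are mutually orthogonal (using $\bar{\partial}_B^2 = (\bar{\partial}_B^*)^2 = 0$), while harmonic forms are both $\bar{\partial}_B$- and $\bar{\partial}_B^*$-closed and hence orthogonal to each image.

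The identification $\mathcal{H}_B^{r,s} \cong H_B^{r,s}$ is then standard: if $\alpha \in \Omega_B^{r,s}$ is $\bar{\partial}_B$-closed and decomposes as $\alpha = h + \bar{\partial}_B \beta + \bar{\partial}_B^* \gamma$, then $\bar{\partial}_B \alpha = 0$ forces $\langle \bar{\partial}_B^*\gamma, \bar{\partial}_B^*\gamma\rangle = \langle \bar{\partial}_B \bar{\partial}_B^*\gamma, \gamma\rangle = 0$, so $\alpha$ is cohomologous to the unique harmonic representative $h \in \mathcal{H}_B^{r,s}$. The main obstacle is purely technical: setting up the spectral theory of $\overline{\square}_B$ on the non-standard space of basic forms, since $\overline{\square}_B$ is not elliptic on $M$ but only transversally so. One must develop the functional-analytic framework for transversally elliptic operators, exploiting that $\overline{\square}_B$ commutes with $P$ and applying transverse regularity results to conclude that $\overline{\square}_B$ behaves spectrally like an elliptic operator on a closed manifold — this is the substance of Th\'eor\`eme 3.3.3 in \cite{EK}.
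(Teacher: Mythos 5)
Your proposal is correct and is essentially the argument the paper itself relies on: the paper offers no proof of Theorem \ref{DolbeaultHodge}, citing El Kacimi-Alaoui's Th\'eor\`eme 3.3.3, and the content of that citation is exactly the transversally elliptic Hodge theory you outline --- formal self-adjointness and transversal ellipticity of $\overline{\square}_{B}$, discrete spectrum with finite-dimensional eigenspaces on the closure of the basic forms, the orthogonal three-term decomposition via $\bar{\partial}_{B}^{2}=(\bar{\partial}_{B}^{\ast })^{2}=0$, and the standard identification of harmonic representatives. One minor inaccuracy worth fixing: the perturbation $\bar{\partial}_{B}\circ H^{0,1}\lrcorner +H^{0,1}\lrcorner \circ \bar{\partial}_{B}$ is a Cartan-type anticommutator and hence a \emph{first}-order operator (a covariant-derivative-type term plus zeroth-order pieces), not zeroth-order as you claim, though this does not affect your conclusion since any lower-order perturbation leaves the principal symbol equal to that of $\Delta _{\bar{\partial}}^{Q}$.
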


\section{Another class in basic Dolbeault cohomology\label{newClassSection}}

Let $\left( M,\mathcal{F},J,g_{Q}\right) $ be a foliation with compact leaf
closures with a holonomy-invariant transverse complex structure and
transverse Hermitian metric. By Theorem \ref{holomorphicAlvarezClass}, the
cohomology classes of $\kappa _{B}^{1,0}$ and $\kappa _{B}^{0,1}$ in $%
H_{\partial _{B}}^{1,0}\left( \mathcal{F}\right) $ and $H_{\bar{\partial}%
_{B}}^{0,1}\left( \mathcal{F}\right) $, respectively, are invariant with
respect to the choice of compatible bundle-like metric. Observe that we may
obtain an additional invariant basic Dolbeault cohomology class from the
transverse holomorphic structure. Note that for any transversely holomorphic
foliation, $\partial _{B}\bar{\partial}_{B}=-\bar{\partial}_{B}\partial _{B}$%
, and $\left( \partial _{B}\bar{\partial}_{B}\right) ^{2}=0$, so that 
\begin{equation*}
\Omega ^{0,0}\left( \mathcal{F}\right) \overset{\partial _{B}\bar{\partial}%
_{B}}{\rightarrow }\Omega ^{1,1}\left( \mathcal{F}\right) \overset{\partial
_{B}\bar{\partial}_{B}}{\rightarrow }\Omega ^{2,2}\left( \mathcal{F}\right) 
\overset{\partial _{B}\bar{\partial}_{B}}{\rightarrow }...\overset{\partial
_{B}\bar{\partial}_{B}}{\rightarrow }\Omega ^{n,n}\left( \mathcal{F}\right)
\end{equation*}%
forms a differential complex, and so that the cohomology $H_{\partial _{B}%
\bar{\partial}_{B}}^{j,j}\left( \mathcal{F}\right) $ is well-defined.

Also, observe that, with $\ast $ denoting the adjoint with respect to basic
forms, 
\begin{equation*}
\Delta _{\partial _{B}\bar{\partial}_{B}}:=\left( \partial _{B}\bar{\partial}%
_{B}\right) ^{\ast }\partial _{B}\bar{\partial}_{B}+\partial _{B}\bar{%
\partial}_{B}\left( \partial _{B}\bar{\partial}_{B}\right) ^{\ast }.
\end{equation*}%
Also, for $\varphi \in $ $\Omega _{B}^{j,j}\left( \mathcal{F}\right) $, $%
\Delta _{\partial _{B}\bar{\partial}_{B}}\varphi =0$ if and only if 
\begin{equation*}
\partial _{B}\bar{\partial}_{B}\varphi =0\text{ and }\partial _{B}^{\ast }%
\bar{\partial}_{B}^{\ast }\varphi =0.
\end{equation*}%
We get the resulting Hodge theorem, since $\Delta _{\partial _{B}\bar{%
\partial}_{B}}$ is strongly elliptic on basic forms on a Riemannian
foliation.

\begin{proposition}
\label{deldelbarHodgeTheoremProp}Let $(M,\mathcal{F},J,g_{Q})$ be a
transverse Hermitian foliation on a compact Riemannian manifold with
bundle-like metric $g_{M}$. Then 
\begin{equation*}
\Omega _{B}^{r,r}(\mathcal{F})\cong \mathcal{H}_{\partial _{B}\bar{\partial}%
_{B}}^{r,r}\oplus \mathrm{Im}\partial _{B}\bar{\partial}_{B}\oplus \mathrm{Im%
}\partial _{B}^{\ast }\bar{\partial}_{B}^{\ast },
\end{equation*}%
where $\mathcal{H}_{\partial _{B}\bar{\partial}_{B}}^{r,r}=\ker \left(
\left. \Delta _{\partial _{B}\bar{\partial}_{B}}\right\vert _{\Omega
^{r,r}\left( \mathcal{F}\right) }\right) \cong H_{\partial _{B}\bar{\partial}%
_{B}}^{r,r}\left( \mathcal{F}\right) $ is finite dimensional.
\end{proposition}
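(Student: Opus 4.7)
The plan is to establish this Hodge decomposition via the standard transversely elliptic Hodge theory for Riemannian foliations, paralleling the proof of Theorem \ref{DolbeaultHodge} for the $\bar\partial_B$ Laplacian.

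First, I would verify that $\Delta_{\partial_B\bar\partial_B}$ is a transversely elliptic fourth-order operator on basic $(r,r)$-forms. Because $\partial_B$ and $\bar\partial_B$ are locally the pullbacks of $\partial$ and $\bar\partial$ on the local quotient complex manifolds of the foliation charts, and the transverse Hermitian metric is the pullback of the corresponding local Hermitian metrics, $\Delta_{\partial_B\bar\partial_B}$ is transversally the pullback of the corresponding fourth-order operator on those local quotients. Once transverse strong ellipticity is in hand, the Fredholm theory for transversely elliptic basic operators (as developed in \cite{EKH}, \cite{PaRi}) gives that $\ker\Delta_{\partial_B\bar\partial_B}\cap \Omega_B^{r,r}(\mathcal{F})$ is finite-dimensional and that both $\partial_B\bar\partial_B$ and its formal adjoint $\partial_B^*\bar\partial_B^*$ have closed range on basic $(r,r)$-forms.

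Given closed range, the orthogonal decomposition is purely functional-analytic. Setting $D = \partial_B\bar\partial_B$, closed range of $D$ gives $\Omega_B^{r,r}(\mathcal{F}) = \mathrm{Im}(D) \oplus \ker(D^*)$, and closed range of $D^*$ gives $\ker(D^*) = \mathrm{Im}(D^*) \oplus (\ker D \cap \ker D^*)$. Combining these and invoking the identity $\ker\Delta_{\partial_B\bar\partial_B} = \ker D \cap \ker D^* = \mathcal{H}_{\partial_B\bar\partial_B}^{r,r}$ already noted in the excerpt yields the stated three-fold decomposition.

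Finally, to identify $\mathcal{H}_{\partial_B\bar\partial_B}^{r,r}$ with $H_{\partial_B\bar\partial_B}^{r,r}(\mathcal{F})$: for any $\varphi \in \Omega_B^{r,r}(\mathcal{F})$ with $D\varphi = 0$, the decomposition writes $\varphi = \varphi_H + D\psi + D^*\eta$; applying $D$ forces $DD^*\eta = 0$, so $\|D^*\eta\|^2 = \langle DD^*\eta,\eta\rangle = 0$ and $\varphi - \varphi_H = D\psi \in \mathrm{Im}(D)$. Conversely, if $\varphi_H = D\chi$ happens to lie in $\mathrm{Im}(D)$, then $\|\varphi_H\|^2 = \langle D\chi,\varphi_H\rangle = \langle \chi, D^*\varphi_H\rangle = 0$, forcing $\varphi_H = 0$. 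The main obstacle is the first step: since $\Delta_{\partial_B\bar\partial_B}$ is a pure fourth-order operator, and not of the Bott--Chern or Aeppli type (which include compensating lower-order terms of the form $\partial_B^*\bar\partial_B\bar\partial_B^*\partial_B$, $\partial_B^*\partial_B$, $\bar\partial_B^*\bar\partial_B$), a careful symbol computation is needed to confirm that its principal symbol is invertible on $\Lambda_C^{r,r}Q^*$ at every nonzero transverse covector $\xi$; if that invertibility fails in some bidegrees, one would need instead to work with a modified elliptic Laplacian having the same kernel.
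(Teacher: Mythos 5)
Your plan is the same one the paper relies on: the paper offers no separate proof of Proposition \ref{deldelbarHodgeTheoremProp} beyond the sentence preceding it, which simply asserts that $\Delta _{\partial _{B}\bar{\partial}_{B}}$ ``is strongly elliptic on basic forms on a Riemannian foliation,'' after which the splitting and the identification $\mathcal{H}_{\partial _{B}\bar{\partial}_{B}}^{r,r}\cong H_{\partial _{B}\bar{\partial}_{B}}^{r,r}$ follow exactly by the closed-range and integration-by-parts arguments you wrote out (those parts of your proposal are correct and standard). The genuine gap is precisely the step you flagged and then deferred: transverse ellipticity of the pure fourth-order operator $\left( \partial _{B}\bar{\partial}_{B}\right) ^{\ast }\partial _{B}\bar{\partial}_{B}+\partial _{B}\bar{\partial}_{B}\left( \partial _{B}\bar{\partial}_{B}\right) ^{\ast }$. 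If you carry out the symbol computation you postponed, you find that invertibility \emph{fails} whenever $n\geq 2$ and $1\leq r\leq n-1$: at a nonzero transverse covector $\xi $, the principal symbol of $D=\partial _{B}\bar{\partial}_{B}$ is a nonzero multiple of $\epsilon \left( \xi ^{1,0}\right) \epsilon \left( \xi ^{0,1}\right) $, and for any $\eta \in Q^{\ast }$ orthogonal to both $\xi $ and $J\xi $ (possible since $n\geq 2$) the form $\alpha =\xi ^{1,0}\wedge \eta ^{0,1}\in \Lambda _{C}^{1,1}Q^{\ast }$ satisfies $\sigma \left( D\right) \alpha =0$ (the factor $\xi ^{1,0}$ repeats) and $\sigma \left( D\right) ^{\ast }\alpha =0$ (after contracting out $\xi ^{1,0}$, no $\xi ^{0,1}$ factor remains), so the symbol of $\Delta _{\partial _{B}\bar{\partial}_{B}}$ has nontrivial kernel.

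Worse, your fallback --- ``a modified elliptic Laplacian having the same kernel'' --- is not available in general, because the kernel itself can be infinite dimensional, so in intermediate bidegrees the Proposition's conclusion fails, not merely this method of proving it. Concretely, let $M=T^{4}\times S^{1}$ be foliated by the $S^{1}$ factors, with transverse structure the flat K\"{a}hler torus $\mathbb{C}^{2}/\Lambda $, so that $\Omega _{B}^{r,s}\left( \mathcal{F}\right) =\Omega ^{r,s}\left( T^{4}\right) $. For every smooth function $h=h\left( z^{2},\bar{z}^{2}\right) $ the form $\alpha _{h}=h\,dz^{1}\wedge d\bar{z}^{2}$ satisfies $\partial \bar{\partial}\alpha _{h}=0$ and $\left( \partial \bar{\partial}\right) ^{\ast }\alpha _{h}=0$ (both conditions reduce to $\partial _{z^{2}}\partial _{\bar{z}^{1}}h=0$), while $\alpha _{h}-\alpha _{h^{\prime }}=\partial \bar{\partial}u$ forces $h=h^{\prime }$, since the vanishing of the $dz^{1}\wedge d\bar{z}^{1}$ and $dz^{2}\wedge d\bar{z}^{2}$ components gives $\partial _{1}\partial _{\bar{1}}u=\partial _{2}\partial _{\bar{2}}u=0$, hence $u$ constant on the torus. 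So $\mathcal{H}_{\partial _{B}\bar{\partial}_{B}}^{1,1}$ and $H_{\partial _{B}\bar{\partial}_{B}}^{1,1}\left( \mathcal{F}\right) $ are both infinite dimensional for this taut transverse K\"{a}hler foliation of codimension $4$. Your symbol check does succeed in bidegrees $\left( 0,0\right) $ and $\left( n,n\right) $, hence for the entire complex when the codimension is $2$ ($n=1$), as in Example \ref{KaehlerExactMeanCurv}; there your argument completes to a proof. The conclusion to draw is that your closing hesitation lands exactly on the paper's unproved assertion, and that the computation you deferred shows the step fails rather than merely needing care: a correct general statement must either replace $\Delta _{\partial _{B}\bar{\partial}_{B}}$ by an elliptic Bott--Chern or Aeppli-type Laplacian (which changes the harmonic space and the cohomology it computes) or restrict the claim to the elliptic bidegrees.
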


\begin{remark}
The $\Delta _{\partial _{B}\bar{\partial}_{B}}$-harmonic form
representatives in the classes in $H_{\partial _{B}\bar{\partial}%
_{B}}^{r,r}\left( \mathcal{F}\right) $ are precisely those with minimum $%
L^{2}$-norm. The usual proof works: if $\alpha $ is one such $\Delta
_{\partial _{B}\bar{\partial}_{B}}$-harmonic $\left( r,r\right) $-form and $%
\beta \in \Omega _{B}^{r-1,r-1}\left( \mathcal{F}\right) $, then%
\begin{eqnarray*}
\ll \alpha +\partial _{B}\bar{\partial}_{B}\beta ,\alpha +\partial _{B}\bar{%
\partial}_{B}\beta \gg &=&\ll \alpha ,\alpha \gg +2\func{Re}\ll \partial _{B}%
\bar{\partial}_{B}\beta ,\alpha \gg +\ll \partial _{B}\bar{\partial}%
_{B}\beta ,\partial _{B}\bar{\partial}_{B}\beta \gg \\
&=&\ll \alpha ,\alpha \gg +2\func{Re}\ll \beta ,\left( \partial _{B}\bar{%
\partial}_{B}\right) ^{\ast }\alpha \gg +\ll \partial _{B}\bar{\partial}%
_{B}\beta ,\partial _{B}\bar{\partial}_{B}\beta \gg \\
&=&\ll \alpha ,\alpha \gg +\ll \partial _{B}\bar{\partial}_{B}\beta
,\partial _{B}\bar{\partial}_{B}\beta \gg \,\geq \,\ll \alpha ,\alpha \gg .
\end{eqnarray*}
\end{remark}

\begin{theorem}
\label{newClassDefinedTheorem}Let $\left( M,\mathcal{F},J,g_{Q}\right) $ be
a foliation with compact leaf closures with a holonomy-invariant transverse
complex structure and transverse Hermitian metric. For a given compatible
bundle-like metric, let $\kappa _{B}^{1,0}$ and $\kappa _{B}^{0,1}$ be the
corresponding basic components of the mean curvature $1$-form $\kappa $.
Then the form $\partial _{B}\kappa _{B}^{0,1}$ is $\partial _{B}\bar{\partial%
}_{B}$-closed, and its cohomology class in $H_{\partial _{B}\bar{\partial}%
_{B}}^{1,1}\left( \mathcal{F}\right) $ is invariant of the choice of
transverse metric and bundle-like metric. A similar result is true for $%
\left[ \bar{\partial}_{B}\kappa _{B}^{1,0}\right] =-\left[ \partial
_{B}\kappa _{B}^{0,1}\right] \in H_{\partial _{B}\bar{\partial}%
_{B}}^{1,1}\left( \mathcal{F}\right) $.
\end{theorem}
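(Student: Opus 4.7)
The plan is to verify the three assertions in turn, all of which follow from formal identities already established in the excerpt together with the \'Alvarez-type gauge-change formula for $\kappa_B$.

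First I would check that $\partial_B\bar{\partial}_B$-closedness of $\partial_B\kappa_B^{0,1}$ is essentially automatic. Since the transverse holomorphic structure gives $\partial_B^2=0$ and $\partial_B\bar{\partial}_B=-\bar{\partial}_B\partial_B$ on basic forms, one computes
\begin{equation*}
(\partial_B\bar{\partial}_B)(\partial_B\kappa_B^{0,1})=-\bar{\partial}_B\partial_B^2\kappa_B^{0,1}=0.
\end{equation*}
The analogous computation for $\bar{\partial}_B\kappa_B^{1,0}$ uses $\bar{\partial}_B^2=0$ instead. Moreover, from $d_B\kappa_B=0$ and bidegree considerations (already recorded just before Proposition~\ref{holomorphicKappaClosedProp}, namely $\partial_B\kappa_B^{0,1}+\bar{\partial}_B\kappa_B^{1,0}=0$), we immediately get $[\bar{\partial}_B\kappa_B^{1,0}]=-[\partial_B\kappa_B^{0,1}]$ in $H_{\partial_B\bar{\partial}_B}^{1,1}(\mathcal{F})$, so the second assertion reduces to the first.

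Next I would establish invariance under change of compatible bundle-like and transverse metric. By the \'Alvarez theorem (Proposition~\ref{AlvarezClassThm}) and its refinement used in Theorem~\ref{holomorphicAlvarezClass}, any change of such metric modifies the basic mean curvature form by $\kappa_B'=\kappa_B+df$ for some real-valued basic function $f$. Applying the projection $\Pi^{0,1}$ and using the identity $\Pi^{0,1}df=\bar{\partial}_Bf$ for real basic functions (the same bidegree argument used in the proof of Theorem~\ref{holomorphicAlvarezClass}), I obtain $(\kappa_B^{0,1})'=\kappa_B^{0,1}+\bar{\partial}_Bf$, hence
\begin{equation*}
\partial_B(\kappa_B^{0,1})'-\partial_B\kappa_B^{0,1}=\partial_B\bar{\partial}_Bf\in\mathrm{Im}(\partial_B\bar{\partial}_B).
\end{equation*}
This difference is trivial in $H_{\partial_B\bar{\partial}_B}^{1,1}(\mathcal{F})$, proving the class $\eta=[\partial_B\kappa_B^{0,1}]$ is well-defined on $(M,\mathcal{F},J,g_Q)$ independently of metric choices. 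The corresponding statement for $\bar{\partial}_B\kappa_B^{1,0}$ is deduced by complex conjugation or again from $[\bar{\partial}_B\kappa_B^{1,0}]=-[\partial_B\kappa_B^{0,1}]$.

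I do not expect a serious obstacle: every ingredient is already in hand. The only point that requires a bit of care is the formula $\Pi^{0,1}df=\bar{\partial}_Bf$ for a \emph{real} basic function $f$; this is the direct analogue (for $(0,1)$) of the step used in Theorem~\ref{holomorphicAlvarezClass} and relies on $f$ being real so that $\bar{\partial}_Bf=\overline{\partial_Bf}$ and $df=\partial_Bf+\bar{\partial}_Bf$ has its $(0,1)$-component equal to $\bar{\partial}_Bf$. The rest is formal manipulation with $\partial_B^2=\bar{\partial}_B^2=0$ and the anticommutation of $\partial_B$ and $\bar{\partial}_B$.
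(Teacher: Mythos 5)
Your proposal is correct and follows essentially the same route as the paper: closedness by a one-line formal computation with $\partial_B^2=\bar{\partial}_B^2=0$ and the anticommutation $\partial_B\bar{\partial}_B=-\bar{\partial}_B\partial_B$, and invariance by the \'Alvarez gauge change $(\kappa_B^{0,1})'=\kappa_B^{0,1}+\bar{\partial}_Bf$ from Theorem~\ref{holomorphicAlvarezClass}, so that the difference $\partial_B\bar{\partial}_Bf$ is exact in the $\partial_B\bar{\partial}_B$-complex. The only cosmetic difference is that the paper derives closedness via $\bar{\partial}_B(\partial_B\kappa_B^{0,1})=-\partial_B(\bar{\partial}_B\kappa_B^{0,1})=0$, invoking Proposition~\ref{holomorphicKappaClosedProp}, whereas your computation is purely formal and does not even need $\bar{\partial}_B\kappa_B^{0,1}=0$.
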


\begin{proof}
Since $0=d_{B}^{2}=\left( \partial _{B}+\bar{\partial}_{B}\right) ^{2}$, $%
\bar{\partial}_{B}\left( \partial _{B}\kappa _{B}^{0,1}\right) =-\partial
_{B}\left( \bar{\partial}_{B}\kappa _{B}^{0,1}\right) =0$, so $\partial
_{B}\kappa _{B}^{0,1}$ is $\bar{\partial}_{B}$-closed and thus $\partial _{B}%
\bar{\partial}_{B}$-closed. By \ref{holomorphicAlvarezClass}, any other
choice of compatible transverse metric and bundle-like metric yields $\left(
\kappa _{B}^{0,1}\right) ^{\prime }=\kappa _{B}^{0,1}+\bar{\partial}_{B}f$
for some complex-valued function $f$. Then%
\begin{eqnarray*}
\partial _{B}\left( \kappa _{B}^{0,1}\right) ^{\prime } &=&\partial
_{B}\kappa _{B}^{0,1}+\partial _{B}\bar{\partial}_{B}f \\
&=&\partial _{B}\kappa _{B}^{0,1}+\bar{\partial}_{B}\partial _{B}\left(
-f\right) .
\end{eqnarray*}%
Applying conjugation, we get a similar result for $\bar{\partial}_{B}\kappa
_{B}^{1,0}$.
\end{proof}

\begin{remark}
By Proposition \ref{holomorphicKappaClosedProp}, $\partial _{B}\kappa
_{B}^{0,1}$ and $\bar{\partial}_{B}\kappa _{B}^{1,0}$ are pure imaginary
forms.
\end{remark}

\begin{remark}
If $\left( M,\mathcal{F}\right) $ as above is a taut foliation, i.e. $\kappa
_{B}$ is $d$-exact, then the class $\left[ \partial _{B}\kappa _{B}^{0,1}%
\right] $ is trivial because $\kappa _{B}^{0,1}$ is $\bar{\partial}_{B}$%
-exact. However, the converse is false; see Example \ref%
{ExampleNewClassZeroButNotTaut}.
\end{remark}

\begin{remark}
\label{realizationOfClassRep}It is clear from the proof of Theorem \ref%
{newClassDefinedTheorem} that we may modify the metric along the leaves of
the foliation so that any purely imaginary element of the class $\left[
\partial _{B}\kappa _{B}^{0,1}\right] \in H_{\partial _{B}\bar{\partial}%
_{B}}^{1,1}\left( \mathcal{F}\right) $ may be realized, since by multiplying
the leafwise metric by a conformal factor results in an arbitrary
real-valued $f$ such that $\partial _{B}\left( \kappa _{B}^{0,1}\right)
^{\prime }=\partial _{B}\kappa _{B}^{0,1}+\partial _{B}\bar{\partial}_{B}f$.
That is, if for some complex-valued function $h$, $\partial _{B}\left(
\kappa _{B}^{0,1}\right) ^{\prime }=\partial _{B}\kappa _{B}^{0,1}+\partial
_{B}\bar{\partial}_{B}h$, then since $\partial _{B}\left( \kappa
_{B}^{0,1}\right) ^{\prime }$ and $\partial _{B}\kappa _{B}^{0,1}$ are pure
imaginary, we have 
\begin{equation*}
\partial _{B}\bar{\partial}_{B}h=-\overline{\partial _{B}\bar{\partial}_{B}h}%
=-\bar{\partial}_{B}\partial _{B}\overline{h}=\partial _{B}\bar{\partial}_{B}%
\overline{h}=\partial _{B}\bar{\partial}_{B}\left( \func{Re}\left( h\right)
\right) ,
\end{equation*}%
so that $h$ may always be taken to be real.

\begin{proposition}
\label{nontautAndCLassZeroImpliesH1gt1Prop}Let $\left( M,\mathcal{F}%
,J,g_{Q}\right) $ be a nontaut foliation with basic harmonic mean curvature $%
\kappa _{B}$ on a compact manifold with a holonomy-invariant transverse
complex structure and transverse Hermitian metric. Then $\partial _{B}\kappa
_{B}^{0,1}=0$ if and only if $d\left( J\kappa _{B}\right) =0$. If $\partial
_{B}\kappa _{B}^{0,1}=0$, then $\left[ \kappa _{B}\right] $ and $\left[
J\kappa _{B}\right] $ provide two linearly independent basic cohomology
classes, so that $\dim H_{b}^{1}\left( \mathcal{F}\right) \geq 2$.

\begin{proof}
The condition $\partial _{B}\kappa _{B}^{0,1}=0$ is equivalent to 
\begin{equation*}
0=d\kappa _{B}^{0,1}=\frac{1}{2}\left( d\kappa _{B}-idJ\kappa _{B}\right) ,
\end{equation*}%
which is equivalent to $d\left( J\kappa _{B}\right) =0$. Next, suppose that $%
J\kappa _{B}\neq \kappa _{B}$ are in the same cohomology class, so that $%
J\kappa _{B}-\kappa _{B}=df$ for some nonzero exact $1$-form. Observe that $%
\ll \kappa _{B},J\kappa _{B}\gg $ is necessarily zero since $J$ is an
isometry, and also $\ll \kappa _{B},df\gg \, =0$ since $\kappa _{B}$ is
basic harmonic. But then $\kappa _{B}$ is orthogonal to itself because $%
\kappa _{B}=J\kappa _{B}-df$, which is possible only if $\kappa _{B}=0$. The
conclusion follows.
\end{proof}
\end{proposition}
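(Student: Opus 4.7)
The plan is to handle the two claims in succession.

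For the equivalence $\partial_B\kappa_B^{0,1}=0 \Leftrightarrow d(J\kappa_B)=0$, note that Proposition~\ref{holomorphicKappaClosedProp} already gives $\bar{\partial}_B\kappa_B^{0,1}=0$, so $d\kappa_B^{0,1} = \partial_B\kappa_B^{0,1}$. Expanding $\kappa_B^{0,1} = \tfrac{1}{2}(\kappa_B - iJ\kappa_B)$ and using $d\kappa_B = 0$ (Proposition~\ref{AlvarezClassThm}), one finds $\partial_B\kappa_B^{0,1} = -\tfrac{i}{2}d(J\kappa_B)$, which gives the stated equivalence.

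For the linear independence claim, I would argue by contradiction: suppose real constants $a,b$, not both zero, and a basic function $f$ satisfy $a\kappa_B + bJ\kappa_B = df$ (the function $f$ is real since the left side is). Taking the $L^{2}$ inner product with $\kappa_B$ and using two observations --- that $\langle \kappa_B, J\kappa_B\rangle \equiv 0$ pointwise, because $J$ is a pointwise skew-symmetric isometry with $J^{2}=-\mathrm{Id}$, and that $\ll \kappa_B, df\gg = \ll\delta_B\kappa_B, f\gg = 0$ by harmonicity of $\kappa_B$ --- collapses the relation to $a\|\kappa_B\|^{2} = 0$. Nontautness forces $\kappa_B\neq 0$, hence $a=0$ and $b\neq 0$ with $J\kappa_B = \tfrac{1}{b}\,df$.

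The remaining step is to rule out $[J\kappa_B]=0$, i.e.\ $J\kappa_B$ being exact. Apply $J^{-1}=-J$ to get $\kappa_B = -\tfrac{1}{b}\,J(df)$, then decompose $df = \partial_B f + \bar{\partial}_B f$ and use that $J$ acts as multiplication by $-i$ on $\Omega_B^{1,0}(\mathcal{F})$ and by $+i$ on $\Omega_B^{0,1}(\mathcal{F})$; extracting the $(1,0)$-part gives $\kappa_B^{1,0} = \tfrac{i}{b}\,\partial_B f$. Hence $\kappa_B^{1,0}$ is $\partial_B$-exact, so $[\kappa_B^{1,0}] = 0$ in $H_{\partial_B}^{1,0}(\mathcal{F})$, and the Remark following Theorem~\ref{holomorphicAlvarezClass} forces $M$ to be taut, contradicting the hypothesis.

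The main obstacle in this argument is the final step: the $L^{2}$-pairing with $\kappa_B$ alone detects only the $a$-coefficient in the exact combination, and a supplementary use of the transverse complex structure --- via the type decomposition of $J(df)$ --- is needed to exhibit $\kappa_B^{1,0}$ as a $\partial_B$-coboundary and thereby to rule out the case $[J\kappa_B]=0$. Pairing with $J\kappa_B$ instead of $\kappa_B$ runs into the difficulty that $J\kappa_B$ need not be basic harmonic in a general transverse Hermitian setting, so a direct symmetric argument is not available.
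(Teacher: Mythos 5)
Your proof is correct, and on the second claim it actually goes further than the paper's own argument. For the equivalence $\partial_B\kappa_B^{0,1}=0\Leftrightarrow d(J\kappa_B)=0$ you do exactly what the paper does (write $d\kappa_B^{0,1}=\tfrac12(d\kappa_B-i\,dJ\kappa_B)$ and use $d\kappa_B=0$ together with $\bar{\partial}_B\kappa_B^{0,1}=0$). For linear independence, the paper's proof pairs the relation $J\kappa_B-\kappa_B=df$ against $\kappa_B$, using the pointwise orthogonality $\langle\kappa_B,J\kappa_B\rangle=0$ and $\ll\kappa_B,df\gg=0$, and concludes with ``the conclusion follows''; as you correctly observe, this pairing detects only the coefficient of $\kappa_B$ in a relation $a\kappa_B+bJ\kappa_B=df$, so it rules out every dependence except the residual case $a=0$, $b\neq0$, i.e.\ $[J\kappa_B]=0$, which the paper's proof leaves unaddressed (and which cannot be handled by pairing with $J\kappa_B$, since $\delta_B(J\kappa_B)=0$ is not available in the general Hermitian setting, exactly as you note). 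Your closing step fills this in: from $J\kappa_B=\tfrac1b\,df$ with $f$ real, the type decomposition $J(df)=-i\,\partial_Bf+i\,\bar{\partial}_Bf$ (using $J\phi=i(s-r)\phi$, which is pointwise algebra and valid in the Hermitian case) yields $\kappa_B^{1,0}=\partial_B\bigl(\tfrac{i}{b}f\bigr)$, so $[\kappa_B^{1,0}]=0$ in $H_{\partial_B}^{1,0}(\mathcal{F})$, contradicting nontautness via the Remark following Theorem~\ref{holomorphicAlvarezClass}. That citation is legitimate within the paper's logic; the only caveat worth recording is that the direction of that Remark you invoke ($[\kappa_B^{1,0}]$ trivial $\Rightarrow$ taut) is asserted there without proof, and in your situation it reduces to showing that a real basic function $f$ with $\partial_B\bar{\partial}_Bf=0$ (forced by $d\kappa_B=0$) is constant --- a transverse pluriharmonicity plus compactness/maximum-principle argument --- so your proof is sound, and in fact more complete than the one printed in the paper.
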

\end{remark}

We will see that the differential form $\partial _{B}\kappa _{B}^{0,1}$ has
particular significance for K\"{a}hler foliations.

\begin{lemma}
\label{delKappa=0Lemma}Let $\left( M,\mathcal{F},J,g_{Q}\right) $ be a
foliation with compact leaf closures, a holonomy-invariant transverse
complex structure and transverse Hermitian metric. Let $V\in \Gamma
_{B}\left( Q^{1,0}\right) $, so that $V^{\flat }\in \Omega _{B}^{0,1}\left( 
\mathcal{F}\right) $. Then the following are equivalent.

\begin{enumerate}
\item $\partial _{B}V^{\flat }=0.$

\item $\partial _{B}\circ V\lrcorner +V\lrcorner \circ \partial _{B}=\nabla
_{V}$ as an operator on locally defined basic differential forms.

\item $\nabla _{Z}V=0$ for all $Z\in \Gamma \left( Q^{1,0}\right) $.

\item Conjugates of the above statements.
\end{enumerate}
\end{lemma}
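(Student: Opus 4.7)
The plan is to localize all three statements to the transverse Hermitian quotient, where each becomes a claim about a $(1,0)$-vector field on a standard Hermitian manifold, and then to prove the equivalences via Cartan's formula combined with a torsion/metric identity. Since $V \in \Gamma_B(Q^{1,0})$ is basic, in any foliation chart with transverse holomorphic coordinates $z_1,\ldots,z_n$ I may write $V = \sum_j V^j \partial_{z_j}$ for basic functions $V^j$, so $V^\flat = \sum_{j,k} g_{j\bar k} V^j \, d\bar z_k \in \Omega_B^{0,1}(\mathcal F)$. Because $\partial_B$, $V\lrcorner$, and $\nabla$ all correspond to their counterparts on the local quotient, it suffices to prove the equivalences for a section $V \in \Gamma(T^{1,0}N)$ on a Hermitian manifold $(N, J, g)$.

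For (1) $\iff$ (3), I would use that $\nabla$ is torsion-free and metric to obtain
\[
(dV^\flat)(X,Y) = g(\nabla_X V, Y) - g(\nabla_Y V, X),
\]
and then project onto the $(1,1)$-component by evaluating on $(Z, \bar W)$ with $Z \in Q^{1,0}$, $\bar W \in Q^{0,1}$. Integrability of $J$ forces the Koszul-computed Christoffel symbols $\Gamma^{\bar m}_{lj} = 0$, so $\nabla_Z V \in Q^{1,0}$ for $Z, V \in Q^{1,0}$. Combined with nondegeneracy of $g$ between $Q^{1,0}$ and $Q^{0,1}$, this yields the equivalence of $\partial_B V^\flat = 0$ with $\nabla_Z V = 0$ for all $Z \in \Gamma(Q^{1,0})$.

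For (2) $\iff$ (3), I would decompose Cartan's formula $\mathcal L_V = d \circ V\lrcorner + V\lrcorner \circ d$ by bidegree: since $V\lrcorner$ lowers the $(1,0)$-degree by one, $\partial_B V\lrcorner + V\lrcorner \partial_B$ is the type-preserving part of $\mathcal L_V$, while $\bar\partial_B V\lrcorner + V\lrcorner \bar\partial_B$ shifts bidegree by $(-1, +1)$. On the other hand, torsion-freeness of $\nabla$ expresses $\mathcal L_V - \nabla_V$ as the derivation extending $\alpha \mapsto \alpha \circ \nabla V$ on $1$-forms. Under (3), $\nabla V$ vanishes on $Q^{1,0}$, and its values on $Q^{0,1}$ lie in $Q^{1,0}$, so this derivation annihilates $\Omega^{0,1}$ and sends $\Omega^{1,0}$ into $\Omega^{0,1}$; hence it shifts bidegree by $(-1, +1)$ and so coincides with the second piece of the Cartan decomposition. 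Matching the type-preserving parts yields (2). Conversely, (2) forces the type-preserving part of $\mathcal L_V - \nabla_V$ to vanish, and evaluating on a local $J$-basic coframe recovers $\nabla_{V_a} V = 0$, hence (3). The conjugate equivalences in (4) follow by applying complex conjugation, under which each of the relevant operators behaves equivariantly.

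The main obstacle will be that in a general transverse Hermitian (non-Kähler) structure, $\nabla_V$ need not preserve bidegree on forms: $\nabla J \neq 0$ introduces off-diagonal Christoffel symbols $\Gamma^m_{l\bar k} \neq 0$ that obstruct the clean type decomposition used above. The matching of type-preserving parts in the Cartan step is therefore cleanest under the transverse Kähler assumption $\nabla J = 0$, which is the setting in which this lemma is actually applied later in the paper.
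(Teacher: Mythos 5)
Your argument is correct in the transverse K\"ahler case, but it takes a genuinely different route from the paper's. The paper works pointwise in a $J$-basic frame $\{V_{a}\},\{\omega ^{a}\}$ chosen with all covariant derivatives vanishing at the point, writes $\partial _{B}$ locally as $\sum_{a}\epsilon (\omega ^{a})\nabla _{V_{a}}$, gets (1)$\Leftrightarrow $(3) from the observation that each $\nabla _{V_{a}}V^{\flat }$ is of type $(0,1)$, and gets (3)$\Rightarrow $(2) by a direct computation using the anticommutation identity $\epsilon (\omega ^{a})(V\lrcorner )+(V\lrcorner )\epsilon (\omega ^{a})=\omega ^{a}(V)$; its converse (2)$\Rightarrow $(3), like yours, applies the operator to the coframe $\omega ^{b}$. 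You replace the frame computation by two structural identities: the torsion-free/metric formula for $dV^{\flat }$ together with the purity of the Christoffel symbols $\Gamma _{lj}^{\bar{m}}=0$, and the bidegree decomposition of Cartan's formula matched against the tensorial identity $\mathcal{L}_{V}-\nabla _{V}=$ (derivation induced by $\nabla V$). Your version is frame-free and makes transparent why $\bar{\partial}_{B}(V\lrcorner )+(V\lrcorner )\bar{\partial}_{B}$ is precisely the tensorial bidegree-$(-1,+1)$ anomaly, which is the mechanism exploited later in Proposition \ref{FormulasForBoxB}; the paper's version is shorter but hides the type bookkeeping in the choice of frame.

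One concrete caution: your closing caveat about the non-K\"ahler case applies to your step (1)$\Leftrightarrow $(3) as well, not only to the Cartan step. In $(dV^{\flat })(Z,\bar{W})=g(\nabla _{Z}V,\bar{W})-g(\nabla _{\bar{W}}V,Z)$, the second term is killed not by integrability but by $\Gamma _{\bar{k}j}^{\bar{m}}=0$, which is the K\"ahler condition; for a general Hermitian metric one computes, assuming (3), that $(\partial _{B}V^{\flat })_{l\bar{k}}=\tfrac{1}{2}V^{j}\left( \partial _{l}g_{j\bar{k}}-\partial _{j}g_{l\bar{k}}\right) $, a multiple of the $(1,1)$-form $V\lrcorner \,\partial _{B}\omega $, so (3) does not force (1) when $\partial _{B}\omega \neq 0$. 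You should therefore place the whole lemma under the hypothesis $\nabla J=0$, exactly as you suspect. It is worth noting that the paper's own proof carries the same implicit restriction, even though the lemma is stated for Hermitian foliations: a $J$-adapted frame of $Q^{1,0}$ parallel at a point exists only where $\nabla J=0$ (if $\nabla V_{a}(p)=0$ for a frame of $Q^{1,0}$, then $(\nabla J)(p)=0$), and the claim that $\nabla _{V_{a}}V^{\flat }$ has type $(0,1)$ likewise uses type preservation of $\nabla $. So your proposal is on equal footing with the paper's proof, and your diagnosis of the Hermitian obstruction is well-founded.
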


\begin{proof}
We assume that at the point we are evaluating the operators, the local bases 
$\left\{ V_{a}\right\} $ and $\left\{ \omega _{a}\right\} $ are chosen so
that all covariant derivatives vanish (we can do that because these are
locally basic sections of $Q$ and $Q^{\ast }$). Statement (1) is equivalent
to%
\begin{equation*}
\sum \omega ^{a}\wedge \nabla _{V_{a}}V^{\flat }=0.
\end{equation*}%
Since $\nabla _{V_{a}}V^{\flat }$ is type $\left( 0,1\right) $, this is
equivalent to $\nabla _{V_{a}}V^{\flat }=0$ for all $a$, which is equivalent
to $\nabla _{Z}V^{\flat }=0$ for all $Z\in Q^{1,0}$, i.e. statement (3).
Next, assume (3): Then $\nabla _{V_{a}}V=0$ ($1\leq a\leq n$). Then for any $%
\phi \in \Omega _{B}^{r,s}(\mathcal{F})$, 
\begin{align*}
\partial _{B}\left( V\lrcorner \,\phi \right) & =\sum_{a}\omega ^{a}\wedge
(\nabla _{V_{a}}V)\lrcorner \,\phi +\sum_{a}\omega ^{a}\wedge V\lrcorner
\nabla _{V_{a}}\phi \\
& =-V\lrcorner \,\bar{\partial}_{B}\phi +\nabla _{V}\phi ,
\end{align*}%
since $\omega ^{a}\wedge $ $\left( V\lrcorner \right) +V\lrcorner \left(
\omega ^{a}\wedge \right) =\left\langle V,V_{a}\right\rangle $. Next,
consider (2). If $V$ satisfies (2), then $V=\sum f_{b}V_{b}$ for some basic
functions $f_{b}$. Then for all $b$, 
\begin{eqnarray*}
\partial _{B}V\lrcorner \,\omega ^{b}+V\lrcorner \,\partial _{B}\omega ^{b}
&=&\partial _{B}f_{b}-\sum_{a}f_{b}\omega ^{a}\wedge \left( V_{b}\lrcorner
\right) \nabla _{V_{a}}\omega ^{b}+f_{b}\nabla _{V_{b}}\omega ^{b}=\partial
_{B}f_{b} \\
&=&\sum_{a}\left( V_{a}f_{b}\right) \omega ^{a}=0,
\end{eqnarray*}%
So at the point in question, (2)\ implies that $V_{a}f_{b}=0$ for all $a,b$
at that point. On the other hand, (3)\ is equivalent to 
\begin{eqnarray*}
\nabla _{V_{a}}V &=&\sum \left( V_{a}f_{b}\right) V_{b}+\sum f_{b}\nabla
_{V_{a}}V_{b} \\
&=&\sum \left( V_{a}f_{b}\right) V_{b}=0
\end{eqnarray*}%
for all $a$ if and only if $V_{a}f_{b}$ for all $a,b$ as well.
\end{proof}

\begin{proposition}
\label{FormulasForBoxB}Let $\left( M,\mathcal{F},J,g_{Q}\right) $ be a
foliation with compact leaf closures with a holonomy-invariant transverse
complex structure and transverse Hermitian metric. Then%
\begin{equation*}
\square _{B}=\Delta _{\partial }^{Q}+\partial _{B}\circ H^{1,0}\lrcorner
+H^{1,0}\lrcorner \circ \partial _{B},
\end{equation*}%
where $\Delta _{\partial }^{Q}=\partial _{T}^{\ast }\partial _{B}+\partial
_{B}\partial _{T}^{\ast }$ is the $\partial $-Laplacian on differential
forms on the local quotients of foliation charts. Similarly,%
\begin{equation*}
\overline{\square }_{B}=\Delta _{\bar{\partial}}^{Q}+\bar{\partial}_{B}\circ
H^{0,1}\lrcorner +H^{0,1}\lrcorner \circ \bar{\partial}_{B}
\end{equation*}
\end{proposition}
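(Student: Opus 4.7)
The plan is to prove both formulas by direct substitution, using the adjoint formula of Proposition \ref{delBstarFormulaProp} and its $\bar{\partial}_B$-analogue. Expanding $\square _{B}=\partial _{B}^{\ast }\partial _{B}+\partial _{B}\partial _{B}^{\ast }$ using $\partial _{B}^{\ast }=\partial _{T}^{\ast }+H^{1,0}\lrcorner $ gives
\begin{equation*}
\square _{B}=\bigl(\partial _{T}^{\ast }\partial _{B}+\partial _{B}\partial _{T}^{\ast }\bigr)+\partial _{B}\circ H^{1,0}\lrcorner +H^{1,0}\lrcorner \circ \partial _{B},
\end{equation*}
so the claim reduces to identifying the parenthesized expression with $\Delta _{\partial }^{Q}$, together with an analogous computation for $\overline{\square }_{B}$.

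The first step I would carry out is to establish the companion formula $\bar{\partial}_{B}^{\ast }=\bar{\partial}_{T}^{\ast }+H^{0,1}\lrcorner $. This can be derived either by repeating the proof of Proposition \ref{delBstarFormulaProp} with the appropriate placement of $\bar{\ast}$, or, more cheaply, by conjugation: since $\bar{\partial}_{B}\alpha =\overline{\partial _{B}\bar{\alpha}}$ and the pointwise Hermitian inner product satisfies $\overline{\langle u,v\rangle }=\langle \bar{u},\bar{v}\rangle $, one obtains $\bar{\partial}_{B}^{\ast }\beta =\overline{\partial _{B}^{\ast }\bar{\beta}}$. Substituting the known formula and using $\overline{H^{1,0}}=H^{0,1}$ together with $\overline{\partial _{T}^{\ast }\bar{\beta}}=\bar{\partial}_{T}^{\ast }\beta $ yields the desired identity. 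As a consistency check, adding the two formulas recovers $\delta _{B}=\delta _{T}+\kappa _{B}^{\#}\lrcorner $ from Proposition \ref{basicAdjointFormulasProp} after splitting $H=H^{1,0}+H^{0,1}$ by type.

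The second step is to identify $\partial _{T}^{\ast }\partial _{B}+\partial _{B}\partial _{T}^{\ast }$ with $\Delta _{\partial }^{Q}$. The paragraph following Proposition \ref{delBstarFormulaProp} already notes that $\partial _{T}^{\ast }$ is the holomorphic divergence on the local quotients of foliation charts, and by construction $\partial _{B}$ restricted to basic forms is the pullback of the local $\partial $. Hence the sum of compositions is exactly the local $\partial $-Laplacian $\partial ^{\ast }\partial +\partial \partial ^{\ast }=\Delta _{\partial }^{Q}$. Applying the same substitution and identification to $\overline{\square }_{B}=\bar{\partial}_{B}^{\ast }\bar{\partial}_{B}+\bar{\partial}_{B}\bar{\partial}_{B}^{\ast }$ using the companion adjoint formula then produces the second identity.

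There is no genuine obstruction: the entire content of the proposition sits in the adjoint formula of Proposition \ref{delBstarFormulaProp} and its conjugate, after which the argument is purely algebraic manipulation. The only bookkeeping subtlety is to keep track of types, namely that $H^{1,0}\lrcorner $ is the $(r{-}1,s)$-lowering part of $\kappa _{B}^{\#}\lrcorner $ while $H^{0,1}\lrcorner $ is the $(r,s{-}1)$-lowering part, which is automatic since $H^{1,0}\in \Gamma \bigl(Q^{1,0}\bigr)$ and $H^{0,1}\in \Gamma \bigl(Q^{0,1}\bigr)$.
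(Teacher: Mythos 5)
Your proof is correct and follows essentially the same route as the paper: the paper's proof is precisely the one-line substitution of $\partial_{B}^{\ast}=\partial_{T}^{\ast}+H^{1,0}\lrcorner$ from Proposition \ref{delBstarFormulaProp} into $\square_{B}=\partial_{B}^{\ast}\partial_{B}+\partial_{B}\partial_{B}^{\ast}$, with the $\overline{\square}_{B}$ case handled analogously. Your extra steps --- deriving the companion formula $\bar{\partial}_{B}^{\ast}=\bar{\partial}_{T}^{\ast}+H^{0,1}\lrcorner$ by conjugation and checking consistency against $\delta_{B}=\delta_{T}+\kappa_{B}^{\sharp}\lrcorner$ --- are sound and merely make explicit what the paper leaves implicit.
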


\begin{proof}
From Proposition \ref{delBstarFormulaProp},%
\begin{equation*}
\square _{B}=\left( \partial _{T}^{\ast }+H^{1,0}\lrcorner \right) \partial
_{B}+\partial _{B}\left( \partial _{T}^{\ast }+H^{1,0}\lrcorner \right) .
\end{equation*}
\end{proof}

\begin{corollary}
\label{KahlerCaseClassVanishesThm}Let $\left( M,\mathcal{F},J,g_{Q}\right) $
be a foliation with compact leaf closures with a holonomy-invariant
transverse complex structure, transverse Hermitian metric, and compatible
bundle-like metric. Then the following are equivalent:

\begin{enumerate}
\item $\square _{B}=\Delta _{\partial }^{Q}+\nabla _{H^{1,0}}$ as operators
on locally defined basic differential forms.

\item $\overline{\square }_{B}=\Delta _{\bar{\partial}}^{Q}+\nabla
_{H^{0,1}} $ as operators on locally defined basic differential forms.

\item $\partial _{B}\kappa _{B}^{0,1}=0.$
\end{enumerate}
\end{corollary}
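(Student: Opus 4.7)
The plan is to reduce each of the three conditions to an instance of Lemma~\ref{delKappa=0Lemma} applied to the appropriate component of the mean curvature vector, and then to glue the two halves together using Proposition~\ref{holomorphicKappaClosedProp}. No additional computation beyond bookkeeping of $(p,q)$-types should be required; in particular, the K\"ahler hypothesis is not needed for this step, only transverse Hermitian.

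First I would read off from Proposition~\ref{FormulasForBoxB} that (1) is precisely the identity
\[
\partial_B \circ H^{1,0}\lrcorner + H^{1,0}\lrcorner \circ \partial_B = \nabla_{H^{1,0}},
\]
and (2) is precisely the conjugate identity for $H^{0,1}$. I would then apply Lemma~\ref{delKappa=0Lemma} with $V = H^{1,0} \in \Gamma_B(Q^{1,0})$. From~\eqref{HKappaFormulas} we have $(H^{1,0})^\flat = \kappa_B^{0,1}$, so statement~(1) of the lemma becomes $\partial_B \kappa_B^{0,1} = 0$ and statement~(2) of the lemma is exactly the operator identity displayed above. This gives the equivalence (3) $\Leftrightarrow$ (1).

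For the equivalence involving $\overline{\square}_B$, I would apply the conjugate form of Lemma~\ref{delKappa=0Lemma} (part (4)) to $V = H^{0,1} \in \Gamma_B(Q^{0,1})$, which satisfies $(H^{0,1})^\flat = \kappa_B^{1,0}$. The conjugated statements yield the equivalence $\bar\partial_B \kappa_B^{1,0} = 0 \Leftrightarrow (2)$. Finally, the identity $\partial_B \kappa_B^{0,1} + \bar\partial_B \kappa_B^{1,0} = 0$ from Proposition~\ref{holomorphicKappaClosedProp} shows that the vanishing of either of these $(1,1)$-forms forces the vanishing of the other, so (3) is equivalent to $\bar\partial_B \kappa_B^{1,0} = 0$, completing the chain (1) $\Leftrightarrow$ (3) $\Leftrightarrow$ (2).

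The only real obstacle is keeping the musical isomorphism straight on complex vector fields: one must verify that the $(1,0)$ vector field $H^{1,0}$ really corresponds under $\flat$ to the $(0,1)$ form $\kappa_B^{0,1}$, so that the hypothesis ``$V \in \Gamma_B(Q^{1,0})$ with $V^\flat \in \Omega_B^{0,1}$'' of Lemma~\ref{delKappa=0Lemma} is satisfied, and the analogous check for the conjugate case. This is exactly the content of~\eqref{HKappaFormulas} combined with the conventions for $\flat$ on complex vectors established in Section~\ref{transHermSection}, so it is immediate once one commits to the notation.
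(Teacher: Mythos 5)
Your proposal is correct and follows essentially the same route as the paper, whose entire proof reads: ``Apply Lemma \ref{delKappa=0Lemma} to $V=H^{1,0}$, $V^{\flat }=\kappa _{B}^{0,1}$ and Proposition \ref{FormulasForBoxB}.'' Your extra bookkeeping --- conjugating the lemma for $H^{0,1}$ and bridging via $\partial _{B}\kappa _{B}^{0,1}=-\bar{\partial}_{B}\kappa _{B}^{1,0}$ --- merely makes explicit what the paper delegates to part (4) of that lemma and to conjugation.
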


\begin{proof}
Apply Lemma \ref{delKappa=0Lemma} to $V=H^{1,0}$, $V^{\flat }=\kappa
_{B}^{0,1}$ and Proposition \ref{FormulasForBoxB}.
\end{proof}

\begin{remark}
The condition $\partial_B\kappa_B^{0,1}=0$ is equivalent to the condition
that if we write 
\begin{equation*}
H^{1,0}=\sum_j H_j^{1,0}(z)\partial_{z_j}
\end{equation*}
in local transverse coordinates, then the functions $H_j^{1,0}(z) $ are
antiholomorphic.
\end{remark}

\begin{theorem}
\label{classTrivialTheorem}Let $\left( M,\mathcal{F},J,g_{Q}\right) $ be a
transverse K\"{a}hler foliation with compatible bundle-like metric. If the
class $\left[ \partial _{B}\kappa _{B}^{0,1}\right] \in H_{\partial _{B}\bar{%
\partial}_{B}}^{1,1}\left( \mathcal{F}\right) $ is trivial, there exists a
bundle-like metric such that as operators on basic differential forms, 
\begin{equation*}
\square _{B}=\overline{\square }_{B}-i\nabla _{JH}.
\end{equation*}
\end{theorem}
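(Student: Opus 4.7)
The strategy is to exploit the triviality of $\eta=[\partial_B\kappa_B^{0,1}]$ to move to a distinguished bundle-like metric where the formulas of Corollary \ref{KahlerCaseClassVanishesThm} hold, and then read off the conclusion from a standard transverse Kähler identity.

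First I would use Remark \ref{realizationOfClassRep}: since the class $[\partial_B\kappa_B^{0,1}]\in H^{1,1}_{\partial_B\bar{\partial}_B}(\mathcal{F})$ is trivial, $\partial_B\kappa_B^{0,1}=\partial_B\bar{\partial}_B h$ for some basic function $h$, and (as noted in the remark) $h$ may be chosen real. Multiplying the leafwise metric by the conformal factor $e^{-(2/p)h}$, where $p=\dim\mathcal{F}$, as in the proof of Corollary \ref{harmonicMeanCurvCorollary}, produces a new bundle-like metric, compatible with the same transverse Kähler structure, for which the new basic mean curvature component satisfies $\partial_B(\kappa_B^{0,1})'=0$. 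Replacing $g_M$ by this metric, I henceforth assume $\partial_B\kappa_B^{0,1}=0$.

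Next I apply Corollary \ref{KahlerCaseClassVanishesThm} with this choice of metric, which yields simultaneously
\begin{equation*}
\square_B=\Delta^Q_{\partial}+\nabla_{H^{1,0}},\qquad \overline{\square}_B=\Delta^Q_{\bar{\partial}}+\nabla_{H^{0,1}}
\end{equation*}
as operators on locally defined basic forms. Because $(M,\mathcal{F})$ is transversely Kähler, the local quotient manifolds of foliation charts are Kähler, and so the ordinary Kähler identity $\Delta_\partial=\Delta_{\bar{\partial}}$ pulls back to give $\Delta^Q_{\partial}=\Delta^Q_{\bar{\partial}}$ on basic forms. This is exactly the transverse analogue of the identity used in Lemma \ref{realOpLemma}. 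Subtracting the two displayed equalities then gives $\square_B-\overline{\square}_B=\nabla_{H^{1,0}-H^{0,1}}$.

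Finally, from the definitions in (\ref{HKappaFormulas}) one has $H^{1,0}=\tfrac12(H-iJH)$ and $H^{0,1}=\tfrac12(H+iJH)$, so $H^{1,0}-H^{0,1}=-iJH$, and linearity of $\nabla$ in its direction argument (extended $\mathbb{C}$-linearly) gives $\nabla_{H^{1,0}-H^{0,1}}=-i\nabla_{JH}$. Combining, $\square_B=\overline{\square}_B-i\nabla_{JH}$, as required. The only substantive step is the first one, where I must verify that the conformal rescaling along the leaves really does change $\partial_B\kappa_B^{0,1}$ by $\partial_B\bar{\partial}_B f$ with $f$ real and is still compatible with the transverse Hermitian data; both facts are already contained in Corollary \ref{harmonicMeanCurvCorollary} and Remark \ref{realizationOfClassRep}, so the proof is essentially assembly.
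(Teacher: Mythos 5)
Your proposal is correct and takes essentially the same route as the paper's own proof: pass to a metric with $\partial_B\kappa_B^{0,1}=0$ via the leafwise conformal change of Remark \ref{realizationOfClassRep}, apply Corollary \ref{KahlerCaseClassVanishesThm}, invoke the transverse K\"ahler identity $\Delta_{\partial}^{Q}=\Delta_{\bar{\partial}}^{Q}$, and conclude from $H^{1,0}-H^{0,1}=-iJH$. The only slip is the sign of your conformal factor: writing $\partial_B\kappa_B^{0,1}=\partial_B\bar{\partial}_B h$, the rescaling you chose changes $\kappa_B$ to $\kappa_B+dh$ and hence \emph{adds} $\partial_B\bar{\partial}_B h$ rather than cancelling it, so you should take $f=-h$ (equivalently the factor $e^{(2/p)h}$), a harmless fix that does not affect the argument.
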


\begin{proof}
If $\left( M,\mathcal{F},J,g_{Q}\right) $ is a transverse K\"{a}hler
foliation with compatible bundle-like metric so that $\Delta _{\bar{\partial}%
}^{Q}=\Delta _{\partial }^{Q}$, the class $\left[ \partial _{B}\kappa
_{B}^{0,1}\right] \in H_{\partial _{B}\bar{\partial}_{B}}^{1,1}\left( 
\mathcal{F}\right) $ is trivial if and only if there exists a compatible
bundle-like metric such that $\partial _{B}\kappa _{B}^{0,1}=0$. By
Corollary \ref{KahlerCaseClassVanishesThm}, $\partial _{B}\kappa
_{B}^{0,1}=0 $ implies that on basic forms, 
\begin{eqnarray*}
\square _{B} &=&\Delta _{\partial }^{Q}+\nabla _{H^{1,0}} \\
&=&\Delta _{\bar{\partial}}^{Q}+\nabla _{H^{1,0}} \\
&=&\Delta _{\bar{\partial}}^{Q}+\nabla _{H^{0,1}}+\nabla _{H^{1,0}-H^{0,1}}
\\
&=&\overline{\square }_{B}+\nabla _{-iJH}.
\end{eqnarray*}
\end{proof}

\begin{corollary}
\label{DolbeaultSymmetryCorollary}Let $\left( M,\mathcal{F},J,g_{Q}\right) $
be a transverse K\"{a}hler foliation with compatible bundle-like metric.
Suppose that the $\partial _{B}\kappa _{B}^{0,1}=0$. Then%
\begin{eqnarray*}
\ker \left( \left. \overline{\square }_{B}\right\vert _{\Omega
_{B}^{r,s}}\right) \cap \ker \left( \left. \square _{B}\right\vert _{\Omega
_{B}^{r,s}}\right) &=&\ker \left( \left. \overline{\square }_{B}\right\vert
_{\Omega _{B}^{r,s}}\right) \cap \ker \left( \left. \nabla _{JH}\right\vert
_{\Omega _{B}^{r,s}}\right) \\
&=&\ker \left( \left. \square _{B}\right\vert _{\Omega _{B}^{r,s}}\right)
\cap \ker \left( \left. \nabla _{JH}\right\vert _{\Omega _{B}^{r,s}}\right) .
\end{eqnarray*}
\end{corollary}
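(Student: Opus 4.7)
The plan is to deduce this directly from Theorem \ref{classTrivialTheorem} together with an elementary kernel-intersection argument. Since we are assuming the \emph{form} $\partial_B \kappa_B^{0,1} = 0$ (not merely that the cohomology class vanishes), there is no need to change the bundle-like metric: tracing through the proof of Theorem \ref{classTrivialTheorem}, the identity
\begin{equation*}
\square_B \;=\; \overline{\square}_B - i\nabla_{JH}
\end{equation*}
holds as operators on basic differential forms for the given metric. Indeed, Corollary \ref{KahlerCaseClassVanishesThm} gives $\square_B = \Delta_\partial^Q + \nabla_{H^{1,0}}$ and $\overline{\square}_B = \Delta_{\bar\partial}^Q + \nabla_{H^{0,1}}$, the K\"ahler condition gives $\Delta_\partial^Q = \Delta_{\bar\partial}^Q$, and the formulas (\ref{HKappaFormulas}) yield $H^{1,0} - H^{0,1} = -iJH$.

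With the identity $\square_B - \overline{\square}_B = -i\nabla_{JH}$ in hand, the corollary reduces to the trivial observation that for any three operators satisfying $A - B = C$ on a vector space, any two of the kernel intersections $\ker A \cap \ker B$, $\ker A \cap \ker C$, $\ker B \cap \ker C$ coincide. I would verify the three inclusions one by one on $\Omega_B^{r,s}(\mathcal{F})$: if $\alpha$ lies in both $\ker \square_B$ and $\ker \overline{\square}_B$, then $i\nabla_{JH}\alpha = \overline{\square}_B\alpha - \square_B\alpha = 0$; if $\alpha$ lies in $\ker \overline{\square}_B \cap \ker \nabla_{JH}$, then $\square_B \alpha = \overline{\square}_B\alpha - i\nabla_{JH}\alpha = 0$; and similarly in the third case.

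There is no real obstacle here; the content is entirely carried by Theorem \ref{classTrivialTheorem}. The only point requiring a line of verification is that the identity from Theorem \ref{classTrivialTheorem} applies to the given metric rather than to some modified one, which is immediate because the hypothesis $\partial_B \kappa_B^{0,1} = 0$ is already stronger than the cohomological triviality assumed in that theorem.
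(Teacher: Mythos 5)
Your proposal is correct and follows the same route as the paper, whose proof simply cites Theorem \ref{classTrivialTheorem}: the identity $\square_B = \overline{\square}_B - i\nabla_{JH}$ plus the elementary observation that when $A-B=C$, any element of two of the kernels lies in the third. Your explicit remark that the pointwise hypothesis $\partial_B\kappa_B^{0,1}=0$ makes the identity valid for the given metric (via Corollary \ref{KahlerCaseClassVanishesThm}, with no metric modification) is exactly the detail the paper leaves implicit, and it is verified correctly, including the computation $H^{1,0}-H^{0,1}=-iJH$.
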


\begin{proof}
This follows directly from Theorem \ref{classTrivialTheorem}.
\end{proof}

Note that the hypothesis on $\partial _{B}\kappa _{B}^{0,1}$ is needed, as
Example \ref{KaehlerExactMeanCurv} shows.

\section{Lefschetz decompositions\label{LefschetzDecompSection}}

We begin with some notation. Let $\left( M,\mathcal{F},J,g_{Q}\right) $ be a
transverse K\"{a}hler foliation with compatible bundle-like metric, with
associated K\"{a}hler form $\omega $. Let $L:\Omega _{B}^{r}(\mathcal{F}%
)\rightarrow \Omega _{B}^{r+2}(\mathcal{F})$ and $\Lambda :\Omega _{B}^{r}(%
\mathcal{F})\rightarrow \Omega _{B}^{r-2}(\mathcal{F})$ be given by 
\begin{equation*}
L(\phi )=\omega \wedge \phi ,\quad \Lambda (\phi )=\omega \lrcorner \,\phi ,
\end{equation*}%
respectively, where $(\beta _{1}\wedge \beta _{2})\lrcorner =\beta
_{2}^{\sharp }\lrcorner \beta _{1}^{\sharp }\lrcorner $ for any basic
1-forms $\beta _{i}(i=1,2)$. 
It follows that $\langle L\phi ,\psi \rangle =\langle \phi ,\Lambda \psi
\rangle $ and $\Lambda =(-1)^{j}\bar{\ast}L\bar{\ast}$ on basic $j$-forms.
For $X\in Q$, from \cite{JJ} we have 
\begin{align}
& [L,X\lrcorner ]=\epsilon (JX^{b}),\quad \lbrack \Lambda ,\epsilon
(X^{b})]=-(JX)\lrcorner ,  \label{LCommutatorFormulas} \\
& [L,\epsilon (X^{b})]=[\Lambda ,X\lrcorner ]=0.  \notag
\end{align}%
The formulas above extend is exactly the same way to complex vectors $X$. We
extend the complex structure $J$ to $\Omega _{B}^{r}(\mathcal{F})$ by the
formula 
\begin{equation*}
J\phi =\sum_{a=1}^{2n}J\theta ^{a}\wedge E_{a}\lrcorner \,\phi .
\end{equation*}%
This formula is consistent with $(J\theta )(X)=-\theta (JX)$ for one-forms $%
\theta $, and for instance $\left( Jv\right) ^{\flat }=Jv^{\flat }$ for
vectors $v$. The operator $J:\Omega _{B}^{r,s}(\mathcal{F})\rightarrow
\Omega _{B}^{r,s}(\mathcal{F})$ is skew-Hermitian: $\langle J\phi ,\psi
\rangle +\langle \phi ,J\psi \rangle =0$, and $J\phi =i\left( s-r\right)
\phi $ for any $\phi \in \Omega _{B}^{r,s}(\mathcal{F})$. This is not the
same as the operator $C$ induced from the pullback $J^{\ast }$ used often in
K\"{a}hler geometry.

We quote some known results as follows.

\begin{proposition}
\cite[Proposition 3.3]{JJ} \label{LProp}If $(M,\mathcal{F},J,g_{Q})$ is a
transverse K\"{a}hler foliation on a compact Riemannian manifold with
bundle-like metric $g_{M}$, 
\begin{equation*}
\lbrack L,J]=[\Lambda ,J]=[L,d_{B}]=[\Lambda ,\delta _{B}]=0.
\end{equation*}
\end{proposition}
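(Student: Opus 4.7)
The plan is to establish the four identities by reducing each to a routine application of facts already in the excerpt, rather than to a direct transverse-geometric computation. The first two, $[L,J]=0$ and $[\Lambda,J]=0$, should be purely algebraic. Since the K\"{a}hler form $\omega$ is of bidegree $(1,1)$, wedging with $\omega$ sends $\Omega_B^{r,s}(\mathcal{F})$ to $\Omega_B^{r+1,s+1}(\mathcal{F})$, and $\Lambda=\omega\lrcorner$ sends $\Omega_B^{r,s}(\mathcal{F})$ to $\Omega_B^{r-1,s-1}(\mathcal{F})$. Because $J$ acts on $\Omega_B^{r,s}(\mathcal{F})$ as multiplication by the scalar $i(s-r)$, which is preserved under both bidegree shifts, each commutator is forced to vanish.

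For $[L,d_B]=0$, I would invoke the transverse K\"{a}hler hypothesis in the form $d_B\omega=0$. Since $\omega$ is basic, $L=\epsilon(\omega)$ maps $\Omega_B^{\ast}(\mathcal{F})$ to itself, and the Leibniz rule gives
$$d_B(\omega\wedge\phi)=(d_B\omega)\wedge\phi+\omega\wedge d_B\phi=\omega\wedge d_B\phi=L(d_B\phi)$$
for every $\phi\in\Omega_B^{\ast}(\mathcal{F})$, which closes the argument.

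The only step that requires a moment's thought is $[\Lambda,\delta_B]=0$, and here I expect the main obstacle to be that $\delta_B$ carries a $\kappa_B^{\sharp}\lrcorner$ contribution (Proposition \ref{basicAdjointFormulasProp}) absent from the purely transverse operator $\delta_T$. The plan is to sidestep this by passing to formal $L^2$ adjoints on basic forms. The pointwise relation $\langle L\phi,\psi\rangle=\langle\phi,\Lambda\psi\rangle$ recalled in the excerpt globalizes to $L^{\ast}=\Lambda$ with respect to $\ll\cdot,\cdot\gg$, while $d_B^{\ast}=\delta_B$ by definition of the basic codifferential. Taking the formal adjoint of $[L,d_B]=0$ then yields $[\delta_B,\Lambda]=0$, i.e.\ $[\Lambda,\delta_B]=0$, with the mean curvature contribution already absorbed into the adjoint relation and so requiring no separate computation.
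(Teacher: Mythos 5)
Your proof is correct. There is nothing in the paper itself to compare it with: the proposition is imported from \cite[Proposition 3.3]{JJ} with no in-text argument, so your write-up effectively supplies the missing proof, and it follows the standard route. The identities $[L,J]=[\Lambda ,J]=0$ are indeed purely algebraic: $\omega =-\sum \theta ^{a}\wedge J\theta ^{a}$ is of type $(1,1)$, so $L$ shifts bidegree by $(+1,+1)$ and its pointwise adjoint $\Lambda $ by $(-1,-1)$ (using that distinct bidegrees are orthogonal for the Hermitian inner product), and the paper's formula $J\phi =i(s-r)\phi $ on $\Omega _{B}^{r,s}(\mathcal{F})$ shows the eigenvalue $i(s-r)$ is unchanged by either shift; note these two identities use only the transverse Hermitian structure, and the K\"{a}hler hypothesis enters only through $d_{B}\omega =0$ in your Leibniz computation for $[L,d_{B}]=0$ (the sign is right since $\omega $ has even degree). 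Your adjoint step for $[\Lambda ,\delta _{B}]=0$ is the genuinely good observation in this possibly nontaut setting: since Proposition \ref{basicAdjointFormulasProp} characterizes $\delta _{B}$ as the formal adjoint of $d_{B}$ on basic forms, and $\ll L\phi ,\psi \gg \,=\,\ll \phi ,\Lambda \psi \gg $ follows by integrating the pointwise identity $\langle L\phi ,\psi \rangle =\langle \phi ,\Lambda \psi \rangle $ against $\mu _{M}$, taking formal adjoints of $[L,d_{B}]=0$ absorbs the mean-curvature term $\kappa _{B}^{\sharp }\lrcorner $ automatically; a direct check is also available via $[\Lambda ,X\lrcorner ]=0$ from (\ref{LCommutatorFormulas}), but your route avoids it. The only points worth making explicit are that all four operators preserve $\Omega _{B}^{\ast }(\mathcal{F})$ — for $\Lambda $ this follows from $\Lambda =(-1)^{j}\bar{\ast}L\bar{\ast}$ together with the fact that $\bar{\ast}$ and $L$ preserve basic forms — and that $M$ is compact, so the antihomomorphism property $(AB)^{\ast }=B^{\ast }A^{\ast }$ of formal adjoints applies without boundary terms.
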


\begin{corollary}
\cite[Proposition 3.4]{JJ}\label{LCor}With the same hypothesis, 
\begin{align}
& [L,\partial _{B}]=[L,\bar{\partial}_{B}]=[\Lambda ,\partial _{B}^{\ast
}]=[\Lambda ,\bar{\partial}_{B}^{\ast }]=0,  \label{LCommutesWithDiffs} \\
& [L,\partial _{B}^{\ast }]=-i\bar{\partial}_{T},\ [L,\bar{\partial}%
_{B}^{\ast }]=i\partial _{T},\ [\Lambda ,\partial _{B}]=-i\bar{\partial}%
_{T}^{\ast },\ [\Lambda ,\bar{\partial}_{B}]=i\partial _{T}^{\ast }.
\label{LCommutesWithCodiffs}
\end{align}
\end{corollary}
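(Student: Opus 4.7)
The plan is to split the corollary into three groups of identities. I will handle the four vanishing commutators via a bidegree argument applied to Proposition \ref{LProp}, then establish two of the non-trivial identities by lifting the classical K\"ahler identities from the local quotient manifolds of the foliation charts, and finally obtain the remaining two identities by taking formal $L^{2}$-adjoints.

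For the vanishing commutators, Proposition \ref{LProp} asserts $[L, d_{B}]=0$. Since $L$ has bidegree $(+1,+1)$ and $d_{B}=\partial _{B}+\bar{\partial}_{B}$, the identity decomposes as a sum of commutators with respective bidegrees $(+2,+1)$ and $(+1,+2)$; because these occupy distinct bidegrees, each must vanish, giving $[L,\partial _{B}]=[L,\bar{\partial}_{B}]=0$. The same bidegree argument applied to $[\Lambda ,\delta _{B}]=0$ with $\delta _{B}=\partial _{B}^{\ast }+\bar{\partial}_{B}^{\ast }$ (the summands lower bidegree by $(1,0)$ and $(0,1)$ respectively) produces $[\Lambda ,\partial _{B}^{\ast }]=[\Lambda ,\bar{\partial}_{B}^{\ast }]=0$.

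For the non-trivial identities the key point is the correct identification of the operators with their local-quotient counterparts. By construction, and by Proposition \ref{delBstarFormulaProp} together with the remark immediately following it, $\partial _{B}$ and $\bar{\partial}_{B}$ descend to the ordinary $\partial $ and $\bar{\partial}$ on the local quotient, while $\partial _{T}^{\ast }$ and $\bar{\partial}_{T}^{\ast }$ descend to the ordinary $\partial ^{\ast }$ and $\bar{\partial}^{\ast }$ there; the operators $L$ and $\Lambda$, being defined through the basic K\"ahler form $\omega $, are also pulled back from the local quotient. Because the local quotient is K\"ahler, the classical K\"ahler identities $[\Lambda ,\partial ]=-i\bar{\partial}^{\ast }$ and $[\Lambda ,\bar{\partial}]=i\partial ^{\ast }$ lift directly to yield $[\Lambda ,\partial _{B}]=-i\bar{\partial}_{T}^{\ast }$ and $[\Lambda ,\bar{\partial}_{B}]=i\partial _{T}^{\ast }$.

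Finally, to obtain the remaining two identities, take formal $L^{2}$-adjoints on basic forms. Using $\Lambda ^{\ast }=L$ and $(cA)^{\ast }=\bar{c}\,A^{\ast }$ one computes
\begin{equation*}
[\Lambda ,\partial _{B}]^{\ast }=\partial _{B}^{\ast }L-L\partial _{B}^{\ast }=-[L,\partial _{B}^{\ast }],\qquad (-i\bar{\partial}_{T}^{\ast })^{\ast }=i\bar{\partial}_{T},
\end{equation*}
so $[\Lambda ,\partial _{B}]=-i\bar{\partial}_{T}^{\ast }$ transposes to $[L,\partial _{B}^{\ast }]=-i\bar{\partial}_{T}$; the adjoint of the second lifted identity similarly yields $[L,\bar{\partial}_{B}^{\ast }]=i\partial _{T}$. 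The only real obstacle is the subscript bookkeeping: the ``$T$'' operators are precisely the ones that agree with the local quotient and hence satisfy the untwisted K\"ahler identities, and the zeroth-order correction relating $\partial _{B}^{\ast }$ to $\partial _{T}^{\ast }$ (namely $H^{1,0}\lrcorner $) never needs to be introduced explicitly because the right-hand sides in the statement already mix the $B$ and $T$ decorations in exactly the way that accounts for the mean curvature contributions.
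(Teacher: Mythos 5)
Your proof is correct, but note that there is no internal proof to compare it against: the paper states Corollary \ref{LCor} as a quotation of \cite[Proposition 3.4]{JJ} (with the remark that the minimal case is \cite[Lemma 3.4.4]{EK}), so your argument is a reconstruction rather than a variant. As a reconstruction it is sound and follows the route one would expect. The bidegree splitting of $[L,d_{B}]=0$ and $[\Lambda ,\delta _{B}]=0$ from Proposition \ref{LProp} is legitimate: $L$ has pure bidegree $(1,1)$ since $\omega \in \Omega _{B}^{1,1}(\mathcal{F})$, and --- the one point worth making explicit --- $\partial _{B}^{\ast }=\partial _{T}^{\ast }+H^{1,0}\lrcorner $ is still of pure bidegree $(-1,0)$ because $H^{1,0}\in \Gamma (Q^{1,0})$, so $\delta _{B}=\partial _{B}^{\ast }+\bar{\partial}_{B}^{\ast }$ really is the bidegree decomposition. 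The lifting step is exactly the mechanism the paper uses elsewhere (e.g.\ in the proofs of Lemma \ref{realOpLemma} and Lemma \ref{LaplacianKahlerFormLemma}): $\partial _{B}$, $\bar{\partial}_{B}$, $\partial _{T}^{\ast }$, $\bar{\partial}_{T}^{\ast }$, $L$, $\Lambda $ are all local-quotient operators, so the classical K\"{a}hler identities pull back verbatim, and your closing observation is the right explanation of why the statement mixes $B$ and $T$ decorations: the mean-curvature correction $H^{1,0}\lrcorner $ lives entirely in the passage from $(\cdot )_{T}$ to $(\cdot )_{B}$ adjoints and is absorbed by your transposition step, which correctly uses $\Lambda ^{\ast }=L$ and the paper's own definition $\partial _{T}=(\partial _{T}^{\ast })^{\ast }$, $\bar{\partial}_{T}=(\bar{\partial}_{T}^{\ast })^{\ast }$ on basic forms. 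The only thing you pass over silently is the sign convention: the identities $[\Lambda ,\partial ]=-i\bar{\partial}^{\ast }$, $[\Lambda ,\bar{\partial}]=i\partial ^{\ast }$ you quote are opposite to the Griffiths--Harris/Huybrechts convention, and they are correct here precisely because the paper sets $\omega =-\sum_{a}\theta ^{a}\wedge J\theta ^{a}$, i.e.\ $\omega (X,Y)=g_{Q}(X,JY)$ rather than $g_{Q}(JX,Y)$; a one-line check of the signs on the flat local model (or against the convention in \cite{JJ}) would close this small gap, and with it your proof is complete.
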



\begin{remark}
All equations above in Proposition~\ref{LProp} and Corollary~\ref{LCor}
continue to hold if we exchange the operators $(\cdot )_{B}$ and $(\cdot
)_{T}$. These results were shown in \cite[Lemma 3.4.4]{EK} in the minimal
foliation case, when $(\cdot )_{B}=(\cdot )_{T}$.
\end{remark}

\begin{proposition}
\label{LaplCommutesWithLProp}If $(M,\mathcal{F},J,g_{Q})$ is a transverse K%
\"{a}hler foliation on a compact Riemannian manifold with bundle-like metric 
$g_{M}$, we have%
\begin{equation*}
\left[ \overline{\square }_{B},L\right] =i~\epsilon \left( \bar{\partial}%
_{B}\kappa _{B}^{1,0}\right)
\end{equation*}%
as operators on basic forms. Similarly,%
\begin{equation*}
\left[ \square _{B},L\right] =i~\epsilon \left( \bar{\partial}_{B}\kappa
_{B}^{1,0}\right) .
\end{equation*}
\end{proposition}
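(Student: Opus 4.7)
The plan is to reduce the bracket $[\overline{\square}_B, L]$ to commutators of $L$ with $\bar{\partial}_B$ and $\bar{\partial}_B^*$, both of which are supplied by Proposition~\ref{LProp} and Corollary~\ref{LCor}, and then to expand $\partial_T$ using the identity $\partial_T = \partial_B - \epsilon(\kappa_B^{1,0})$.

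First I would expand, using $\overline{\square}_B = \bar{\partial}_B^* \bar{\partial}_B + \bar{\partial}_B \bar{\partial}_B^*$ and $[L,\bar{\partial}_B]=0$ (from Proposition~\ref{LProp}),
\begin{equation*}
[\overline{\square}_B, L] \;=\; \bar{\partial}_B [\bar{\partial}_B^*, L] + [\bar{\partial}_B^*, L]\bar{\partial}_B.
\end{equation*}
By Corollary~\ref{LCor}, $[\bar{\partial}_B^*, L] = -[L, \bar{\partial}_B^*] = -i\partial_T$, and by (\ref{dTdBFormulas}) applied in the $(1,0)$-direction, $\partial_T = \partial_B - \epsilon(\kappa_B^{1,0})$. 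Substituting,
\begin{equation*}
[\overline{\square}_B, L] \;=\; -i\bigl(\bar{\partial}_B\partial_B + \partial_B\bar{\partial}_B\bigr) + i\bigl(\bar{\partial}_B\circ\epsilon(\kappa_B^{1,0}) + \epsilon(\kappa_B^{1,0})\circ\bar{\partial}_B\bigr).
\end{equation*}
The first parenthesis vanishes since $\partial_B\bar{\partial}_B + \bar{\partial}_B\partial_B = 0$, and the Leibniz identity
$\bar{\partial}_B\circ \epsilon(\kappa_B^{1,0}) = \epsilon(\bar{\partial}_B\kappa_B^{1,0}) - \epsilon(\kappa_B^{1,0})\circ \bar{\partial}_B$
(valid because $\kappa_B^{1,0}$ is a $1$-form) collapses the second parenthesis to $\epsilon(\bar{\partial}_B\kappa_B^{1,0})$. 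This yields $[\overline{\square}_B, L] = i\,\epsilon(\bar{\partial}_B\kappa_B^{1,0})$.

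For the second identity, I would run the parallel argument with $\square_B = \partial_B^*\partial_B + \partial_B\partial_B^*$, using $[L,\partial_B]=0$ and $[L,\partial_B^*]=-i\bar{\partial}_T$ from Corollary~\ref{LCor}, so that $[\partial_B^*, L] = i\bar{\partial}_T$ and
\begin{equation*}
[\square_B, L] \;=\; i\bigl(\partial_B\bar{\partial}_T + \bar{\partial}_T\partial_B\bigr).
\end{equation*}
Writing $\bar{\partial}_T = \bar{\partial}_B - \epsilon(\kappa_B^{0,1})$, the $\bar{\partial}_B\partial_B + \partial_B\bar{\partial}_B$ term again cancels, and the Leibniz identity applied to $\epsilon(\kappa_B^{0,1})$ produces $-i\,\epsilon(\partial_B\kappa_B^{0,1})$. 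Finally I would invoke Proposition~\ref{holomorphicKappaClosedProp} (or simply $d_B\kappa_B=0$) in the form $\partial_B\kappa_B^{0,1} + \bar{\partial}_B\kappa_B^{1,0}=0$ to rewrite this as $i\,\epsilon(\bar{\partial}_B\kappa_B^{1,0})$, matching the stated expression.

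The computations are largely bookkeeping; the only place where genuine care is needed is in signs and ordering of the Leibniz rule (since $\kappa_B^{1,0}$ is an odd-degree form), and in correctly identifying $[L,\bar{\partial}_B^*]$ versus $[\bar{\partial}_B^*, L]$ from Corollary~\ref{LCor}. The main conceptual step, and the reason the two brackets coincide, is the identity $\bar{\partial}_B\kappa_B^{1,0} = -\partial_B\kappa_B^{0,1}$ that comes from $d\kappa_B=0$.
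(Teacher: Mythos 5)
Your proof is correct and follows essentially the same route as the paper: expand the bracket using $[L,\bar{\partial}_B]=0$ and $[L,\bar{\partial}_B^{\ast}]=i\partial_T$ from Corollary~\ref{LCor}, substitute $\partial_T=\partial_B-\epsilon(\kappa_B^{1,0})$, and collapse via the (correctly signed) Leibniz rule. The only cosmetic difference is in the second identity, where the paper takes conjugates after noting $\bar{\partial}_B\kappa_B^{1,0}$ is pure imaginary, while you run the parallel computation with $[L,\partial_B^{\ast}]=-i\bar{\partial}_T$ and invoke $\partial_B\kappa_B^{0,1}=-\bar{\partial}_B\kappa_B^{1,0}$ from $d\kappa_B=0$ --- both are valid and of equal length.
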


\begin{proof}
By the corollary above, we have 
\begin{eqnarray*}
\overline{\square }_{B}L &=&\bar{\partial}_{B}^{\ast }\bar{\partial}_{B}L+%
\bar{\partial}_{B}\bar{\partial}_{B}^{\ast }L \\
&=&\bar{\partial}_{B}^{\ast }L\bar{\partial}_{B}+\bar{\partial}_{B}L\bar{%
\partial}_{B}^{\ast }-i\bar{\partial}_{B}\bar{\partial}_{T} \\
&=&L\bar{\partial}_{B}^{\ast }\bar{\partial}_{B}+L\bar{\partial}_{B}\bar{%
\partial}_{B}^{\ast }-i\bar{\partial}_{B}\partial _{T}-i\partial _{T}\bar{%
\partial}_{B} \\
&=&L\overline{\square }_{B}-i\left( \bar{\partial}_{B}\partial _{B}+\partial
_{B}\bar{\partial}_{B}-\bar{\partial}_{B}\epsilon \left( \kappa
_{B}^{1,0}\right) -\epsilon \left( \kappa _{B}^{1,0}\right) \bar{\partial}%
_{B}\right) \\
&=&L\overline{\square }_{B}+i\epsilon \left( \bar{\partial}_{B}\kappa
_{B}^{1,0}\right) .
\end{eqnarray*}%
The second part follows from noticing that $\bar{\partial}_{B}\kappa
_{B}^{1,0}$ is pure imaginary and from taking conjugates.
\end{proof}

\begin{lemma}
We have the following identities.

\begin{enumerate}
\item $\left[ \Lambda ,L\right] =\sum \left( n-r\right) P_{r}$ as an
operator on basic forms, where $P_{r}:$ $\Omega _{B}^{\ast }\left( \mathcal{F%
}\right) \rightarrow \Omega _{B}^{r}\left( \mathcal{F}\right) $ is the
projection.

\item $\left[ \sum \left( n-r\right) P_{r},\Lambda \right] =2\Lambda $.

\item $\left[ \sum \left( n-r\right) P_{r},L\right] =-2L$.
\end{enumerate}
\end{lemma}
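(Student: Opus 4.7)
The plan is to treat items (2) and (3) as immediate weight-counting observations, reserving real work for (1), which is the classical Kähler identity $[\Lambda,L] = n-r$ on basic $r$-forms. All three statements are pointwise algebraic identities on $\Lambda^{\ast}Q^{\ast}$, so local arguments suffice.

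For (2) and (3), write $H := \sum_{r\ge 0}(n-r)P_{r}$, so that $H\phi = (n-r)\phi$ for $\phi\in\Omega_{B}^{r}(\mathcal{F})$. Since $L$ and $\Lambda$ shift form degree by $+2$ and $-2$ respectively, for any $\phi\in\Omega_{B}^{r}(\mathcal{F})$ we compute
$$[H,L]\phi = (n-r-2)L\phi - (n-r)L\phi = -2L\phi,$$
$$[H,\Lambda]\phi = (n-r+2)\Lambda\phi - (n-r)\Lambda\phi = 2\Lambda\phi,$$
giving (3) and (2) respectively. No additional input is needed.

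For (1) I would argue by induction on $r$. In the base case $r=0$, $\Lambda f = 0$ for any basic function $f$, so $[\Lambda,L]f = \Lambda(f\omega) = f(\omega\lrcorner\omega)$. Using $\omega = -\sum_{a}\theta^{a}\wedge J\theta^{a}$ in a $J$-basic orthonormal coframe together with the convention $(\beta_{1}\wedge\beta_{2})\lrcorner = \beta_{2}^{\sharp}\lrcorner\beta_{1}^{\sharp}\lrcorner$, one computes $E_{a}\lrcorner\omega = -J\theta^{a}$ and then $\omega\lrcorner\omega = -\sum_{a}JE_{a}\lrcorner(-J\theta^{a}) = n$, matching $(n-0)P_{0}f$. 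For the inductive step, every $\psi\in\Omega_{B}^{r+1}(\mathcal{F})$ is locally a sum of terms $\epsilon(\alpha^{k})\phi_{k}$ with $\phi_{k}\in\Omega_{B}^{r}(\mathcal{F})$ and $\{e_{k},\alpha^{k}\}$ a real orthonormal basic frame. The commutators in (\ref{LCommutatorFormulas}) give $[L,\epsilon(\alpha^{k})]=0$, $[\Lambda,\epsilon(\alpha^{k})]=-(Je_{k})\lrcorner$, and $[L,(Je_{k})\lrcorner] = \epsilon\bigl(J(Je_{k})^{\flat}\bigr) = -\epsilon(\alpha^{k})$ by $J^{2}=-I$. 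Rearranging these with the inductive hypothesis on $\phi_{k}$ yields
$$[\Lambda,L]\bigl(\epsilon(\alpha^{k})\phi_{k}\bigr) = \epsilon(\alpha^{k})[\Lambda,L]\phi_{k} + [L,(Je_{k})\lrcorner]\phi_{k} = (n-r)\epsilon(\alpha^{k})\phi_{k} - \epsilon(\alpha^{k})\phi_{k} = \bigl(n-(r+1)\bigr)\epsilon(\alpha^{k})\phi_{k},$$
completing the induction.

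The only real obstacle is sign-tracking: the $J^{2}=-I$ appearing in $[L,(Je_{k})\lrcorner]=-\epsilon(\alpha^{k})$ is precisely what converts the inductive coefficient $(n-r)$ into the desired $(n-r-1)$, and beyond this the three assertions are standard Kähler linear algebra.
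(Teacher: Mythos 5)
Your proof is correct, but it reaches item (1) by a genuinely different route than the paper. The paper evaluates $\left[ \Lambda ,L\right] $ directly on simple monomials $\theta ^{i_{1}}\wedge \cdots \wedge J\theta ^{j_{r_{2}}}$: the term $\omega \lrcorner \,\omega \wedge $ contributes $\tau _{1}$ (the number of indices $a$ with $E_{a}\lrcorner \,\alpha =JE_{a}\lrcorner \,\alpha =0$), the term $\omega \wedge \omega \lrcorner $ contributes $-\tau _{2}$, all cross terms cancel, and the identity follows from the count $n=r+\tau _{1}-\tau _{2}$. You instead induct on degree, using the commutator formulas (\ref{LCommutatorFormulas}) --- $[L,\epsilon (X^{\flat })]=0$, $[\Lambda ,\epsilon (X^{\flat })]=-(JX)\lrcorner $, $[L,X\lrcorner ]=\epsilon (JX^{\flat })$ together with $J^{2}=-I$ on one-forms --- with base case $\omega \lrcorner \,\omega =n$; I checked the signs ($E_{a}\lrcorner \,\omega =-J\theta ^{a}$, $[L,(Je_{k})\lrcorner ]=-\epsilon (\alpha ^{k})$, and the bracket expansion $[\Lambda ,L]\,\epsilon (\alpha ^{k})\phi _{k}=\epsilon (\alpha ^{k})[\Lambda ,L]\phi _{k}+[L,(Je_{k})\lrcorner ]\phi _{k}$) and they are all right. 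Your induction packages automatically the one step the paper asserts without detail (``all other contributions cancel between the two terms''), at the cost of invoking the cited formulas from \cite{JJ} --- which the paper quotes anyway, so no extra input is needed; since all operators involved are $C^{\infty }$-linear bundle maps, your pointwise local-frame decomposition $\psi =\sum_{k}\alpha ^{k}\wedge \phi _{k}$ is legitimate (e.g.\ take $\phi _{k}=\frac{1}{r+1}e_{k}\lrcorner \,\psi $). For (2) and (3) the paper derives (2) from (1) via the expansion of $\left[ \left[ \Lambda ,L\right] ,\Lambda \right] $ and gets (3) by taking adjoints, whereas you compute both directly from the grading operator $H=\sum (n-r)P_{r}$; your version is marginally more self-contained, as it does not depend on (1) at all, while the paper's adjoint trick spares one line of arithmetic. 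Both arguments are complete and correct.
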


\begin{proof}
If $\alpha \in \Omega _{B}^{r}\left( \mathcal{F}\right) $, 
\begin{equation*}
\left[ \Lambda ,L\right] \alpha =\sum_{a,b=1}^{n}JE_{b}\lrcorner
E_{b}\lrcorner \theta ^{a}\wedge J\theta ^{a}\wedge \alpha -\theta
^{a}\wedge J\theta ^{a}\wedge JE_{b}\lrcorner E_{b}\lrcorner \alpha \cdot
\end{equation*}%
It is easy to see that for a simple $r$ form $\alpha =\theta ^{i_{1}}\wedge
...\wedge \theta ^{i_{r_{1}}}\wedge J\theta ^{j_{1}}\wedge ...\wedge J\theta
^{j_{r_{2}}}$, the term $\omega \lrcorner \omega \wedge $ will contribute $%
\tau _{1}$, the number of $a$ such that $E_{a}\lrcorner \alpha =0$ and $%
JE_{a}\lrcorner \alpha =0$, and the second term $\omega \wedge \omega
\lrcorner $ will contribute $-\tau _{2}$, the number of $b$ such that $%
JE_{b}\lrcorner E_{b}\lrcorner \alpha \neq 0$. All other contributions
cancel between the two terms. Then by counting we see that $%
n=r_{1}+r_{2}+\tau _{1}-\tau _{2}=r+\left( \tau _{1}-\tau _{2}\right) $.
Equation (1) follows.

On the other hand, since $\Lambda \alpha \in \Omega _{B}^{r-2}\left( 
\mathcal{F}\right) $ for $\alpha \in \Omega _{B}^{r}\left( \mathcal{F}%
\right) $, we have by (1)%
\begin{eqnarray*}
\left[ \left[ \Lambda ,L\right] ,\Lambda \right] \alpha &=&\left[ \Lambda ,L%
\right] \Lambda \alpha -\Lambda \left[ \Lambda ,L\right] \alpha \\
&=&\left( n-r+2\right) \Lambda \alpha -\left( n-r\right) \Lambda \alpha
=2\Lambda \alpha ,
\end{eqnarray*}%
proving (2). Taking adjoints, we obtain (3).
\end{proof}

Letting $X=\left( 
\begin{array}{cc}
0 & 1 \\ 
0 & 0%
\end{array}%
\right) $, $Y=\left( 
\begin{array}{cc}
0 & 0 \\ 
1 & 0%
\end{array}%
\right) $, $A=\left( 
\begin{array}{cc}
1 & 0 \\ 
0 & -1%
\end{array}%
\right) $ be the generators of $\mathfrak{sl}_{2}\left( \mathbb{C}\right) $,
we note that the relations are 
\begin{equation*}
\left[ X,Y\right] =A,~\left[ A,X\right] =2X,~\left[ A,Y\right] =-2Y.
\end{equation*}

\begin{lemma}
The maps $X\mapsto L$, $Y\mapsto \Lambda $, $A\mapsto \sum \left( n-r\right)
P_{r}$ induces an $\mathfrak{sl}_{2}\left( \mathbb{C}\right) $
representation on the fibers of $\Omega _{B}^{\ast }\left( \mathcal{F}%
\right) $.
\end{lemma}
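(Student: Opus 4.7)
The plan is to reduce the statement to a direct application of the preceding lemma together with the observation that all three operators are fiberwise. First, notice that $L$, $\Lambda$, and $\sum_{r}(n-r)P_{r}$ are built from wedge product with the K\"{a}hler form $\omega$, interior product with $\omega$, and the degree projections; none of these operations involve differentiation in any foliated or transverse direction. Hence each is $C^{\infty}(M)$-linear on $\Omega_{B}^{*}(\mathcal{F})$, so all three descend to linear endomorphisms of the fiber $(\Lambda_{C}^{*}Q^{*})_{x}$ at every $x\in M$, and it suffices to check the representation property on a single fiber.

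Next, the key step is to verify the three defining commutation relations of $\mathfrak{sl}_{2}(\mathbb{C})$,
\[
[X,Y]=A,\qquad [A,X]=2X,\qquad [A,Y]=-2Y,
\]
for the image triple $(L,\Lambda,\sum_{r}(n-r)P_{r})$. This requires no new calculation: parts (1), (3), and (2) of the immediately preceding lemma compute $[\Lambda,L]$, $[\sum_{r}(n-r)P_{r},L]$, and $[\sum_{r}(n-r)P_{r},\Lambda]$ respectively, which are exactly the three bracket identities needed (after matching the order of the arguments with the $\mathfrak{sl}_{2}$ convention used by the author in defining the generators $X$, $Y$, $A$).

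Once the three generators map to operators satisfying the defining Serre-type relations of $\mathfrak{sl}_{2}(\mathbb{C})$, the universal property of the Lie algebra presented by $\{X,Y,A\}$ modulo these relations gives a unique Lie algebra homomorphism
\[
\rho_{x}\colon \mathfrak{sl}_{2}(\mathbb{C})\longrightarrow \operatorname{End}\bigl((\Lambda_{C}^{*}Q^{*})_{x}\bigr),
\]
which is by definition a representation on the fiber. Because $L$, $\Lambda$, and $\sum_{r}(n-r)P_{r}$ are globally defined and smooth in $x$, the family $\{\rho_{x}\}_{x\in M}$ assembles into a smooth bundle of $\mathfrak{sl}_{2}(\mathbb{C})$-representations. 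There is no substantive obstacle; the only point requiring attention is bookkeeping the sign and order conventions so that each bracket identity from the preceding lemma is paired with the correct Chevalley relation.
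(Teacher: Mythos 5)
Your proposal is correct and takes essentially the same route as the paper, whose entire proof is likewise to check the three Chevalley relations against the commutators computed in the preceding lemma (the fiberwise reduction and the generators-and-relations framing are harmless elaborations of the same argument). Your closing caveat about conventions is the one substantive point: the preceding lemma gives $[\Lambda,L]=\sum(n-r)P_{r}$, $[\sum(n-r)P_{r},\Lambda]=2\Lambda$, and $[\sum(n-r)P_{r},L]=-2L$, so the relations $[X,Y]=A$, $[A,X]=2X$, $[A,Y]=-2Y$ are satisfied by the swapped assignment $X\mapsto\Lambda$, $Y\mapsto L$, $A\mapsto\sum(n-r)P_{r}$ rather than the one literally stated --- a relabeling of generators that is exactly the bookkeeping you anticipated.
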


\begin{proof}
The relations are easily checked using the Lemma above.
\end{proof}

In what follows, we call an element $\xi \in \Lambda ^{\ast }Q^{\ast }$ 
\textbf{primitive} if 
\begin{equation*}
\Lambda \xi =0.
\end{equation*}

\begin{corollary}
Each fiber of the bundle $\Lambda ^{\ast }Q^{\ast }$ decompose into
irreducible representations of $\mathfrak{sl}_{2}\left( \mathbb{C}\right) $, 
$\Lambda ^{\ast }Q^{\ast }=\bigoplus_{0\leq k\leq n}V_{k}$, where each $%
V_{k} $ of dimension $k+1$ has the form 
\begin{equation*}
V_{k}=\mathbb{C}\alpha +\mathbb{C}L\alpha +...+\mathbb{C}L^{k}\alpha ,
\end{equation*}%
where $\alpha \in \left( \ker \Lambda \right) \cap \Lambda ^{n-k}Q^{\ast }$
is primitive, $L^{r}\alpha \in \left( Q^{\ast }\right) ^{n-k+2r}$ for $0\leq
r\leq k$.
\end{corollary}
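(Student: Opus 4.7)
The plan is to apply the classical structure theorem for finite-dimensional representations of $\mathfrak{sl}_2(\mathbb{C})$ directly to the fiberwise representation provided by the preceding lemma. Since each fiber of $\Lambda^* Q^*$ is finite-dimensional and $\mathfrak{sl}_2(\mathbb{C})$ is semisimple, Weyl's theorem on complete reducibility gives a direct sum decomposition into irreducibles, and the explicit shape of these irreducibles follows from highest weight theory.

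First I would pin down which operator plays the role of raising and which the role of lowering. From the relations $[A,\Lambda]=2\Lambda$ and $[A,L]=-2L$ established in the preceding lemma, with $A=\sum_r(n-r)P_r$ acting on $\Lambda^r Q^*$ by the scalar $n-r$, the operator $\Lambda$ is the raising operator and $L$ is the lowering operator. Consequently, a highest weight vector in the standard $\mathfrak{sl}_2(\mathbb{C})$ sense is precisely one annihilated by $\Lambda$, i.e.\ a primitive element as defined above, and a primitive element of weight $k$ lies in $\Lambda^{n-k}Q^*$.

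Next I would invoke the classical fact that each finite-dimensional irreducible representation of $\mathfrak{sl}_2(\mathbb{C})$ has a highest weight vector $\alpha$, unique up to scalar, whose weight $k$ is a non-negative integer; the representation is $(k+1)$-dimensional and spanned by $\{\alpha,L\alpha,L^2\alpha,\ldots,L^k\alpha\}$, with $L^{k+1}\alpha=0$. Since $L$ decreases the $A$-weight by $2$ and raises the form degree by $2$, the weight–degree bookkeeping forces $L^r\alpha\in\Lambda^{n-k+2r}Q^*$. Summing over the irreducible summands appearing in the Weyl decomposition of each fiber of $\Lambda^* Q^*$ produces exactly the claimed decomposition $\Lambda^*Q^*=\bigoplus_{0\leq k\leq n}V_k$.

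Finally, the range $0\leq k\leq n$ is forced: non-negativity of $k$ is automatic from the highest weight classification, while $k\leq n$ follows either from the fact that the primitive element $\alpha$ must live in $\Lambda^{n-k}Q^*$ with $n-k\geq 0$, or equivalently from $L^k\alpha\in\Lambda^{n+k}Q^*$ together with $n+k\leq 2n$. There is no substantive obstacle in this argument; the only care required is the correct identification of $\Lambda$ as the raising operator and $L$ as the lowering operator under the sign convention chosen in the lemma, since this dictates which vectors are the highest weight vectors and in which direction $L$ is iterated to span each $V_k$.
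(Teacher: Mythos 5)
Your proposal is correct and is essentially the paper's proof: the paper simply cites ``direct application of the $\mathfrak{sl}_{2}\left( \mathbb{C}\right)$ representation theory,'' and you have spelled out the details (Weyl complete reducibility on each finite-dimensional fiber, highest weight classification, and the weight--degree bookkeeping) that the paper leaves implicit. Your care in identifying $\Lambda$ as the raising operator and $L$ as the lowering operator is well placed, since the commutators $[\Lambda,L]=\sum(n-r)P_{r}$, $[\sum(n-r)P_{r},\Lambda]=2\Lambda$, $[\sum(n-r)P_{r},L]=-2L$ force the assignment $X\mapsto \Lambda$, $Y\mapsto L$ (the paper's lemma states the map with $X$ and $Y$ swapped, an apparent typo that your bookkeeping implicitly corrects).
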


\begin{proof}
Direct application of the $\mathfrak{sl}_{2}\left( \mathbb{C}\right) $
representation theory.
\end{proof}

By the K\"{a}hler conditions that $\nabla J=0$ and $d\omega =0$, the tensor
field $\Lambda $ is parallel and has constant rank on $\Omega _{B}^{r}\left( 
\mathcal{F}\right) $. Hence its kernel $\ker \Lambda \subseteq \Omega
_{B}^{r}\left( \mathcal{F}\right) $ is a parallel subbundle of $\Omega
_{B}^{r}\left( \mathcal{F}\right) $. We let 
\begin{equation*}
\Omega _{B,P}^{r}\left( \mathcal{F}\right) =\Gamma _{B}\left( \ker \Lambda
\cap \Lambda ^{r}Q^{\ast }\right) \subseteq \Omega _{B}^{r}\left( \mathcal{F}%
\right) .
\end{equation*}%
denote the space of primitive basic forms.

\begin{proposition}
\label{LFormProp}Let $(M,\mathcal{F},J,g_{Q})$ be a transverse K\"{a}hler
foliation of codimension $2n$ on a compact Riemannian manifold with
bundle-like metric $g_{M}$. We have the following.

\begin{enumerate}
\item $\Omega _{B,P}^{r}\left( \mathcal{F}\right) =0$ if $r>n$.

\item If $\alpha \in \Omega _{B,P}^{r}\left( \mathcal{F}\right) $, then $%
L^{j}\alpha \neq 0$ for $0\leq j\leq n-r$ and $L^{k}\alpha =0$ for $k>n-r$.

\item The map $L^{k}:\Omega _{B}^{r}\left( \mathcal{F}\right) \rightarrow
\Omega _{B}^{r+2k}\left( \mathcal{F}\right) $ is injective for $0\leq k\leq
n-r$.

\item The map $L^{k}:\Omega _{B}^{r}\left( \mathcal{F}\right) \rightarrow
\Omega _{B}^{r+2k}\left( \mathcal{F}\right) $ is surjective for $k=n-r$.

\item $\Omega _{B}^{r}\left( \mathcal{F}\right) =\bigoplus_{k\geq
0}L^{k}\Omega _{B,P}^{r-2k}\left( \mathcal{F}\right) $.
\end{enumerate}
\end{proposition}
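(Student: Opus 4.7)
The plan is to reduce the proposition to the $\mathfrak{sl}_{2}(\mathbb{C})$ representation theory established in the preceding lemma and corollary, applied fiberwise on $\Lambda^{\ast}Q^{\ast}$, and then to extend the resulting pointwise decomposition globally using the K\"ahler condition $\nabla \omega = 0$, which makes $L$, $\Lambda$, and the weight grading $A = \sum(n-r)P_{r}$ all parallel.

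First I fix conventions. Since $A$ acts by the scalar $n-r$ on $\Omega_{B}^{r}(\mathcal{F})$ and satisfies $[A,L] = -2L$, $[A,\Lambda] = 2\Lambda$, the operator $L$ lowers the $\mathfrak{sl}_{2}$-weight by $2$ while raising the form degree by $2$, and $\Lambda$ raises the weight. A primitive form $\alpha \in \Omega_{B,P}^{r}(\mathcal{F})$ is therefore a highest-weight vector with weight $n-r$; since highest weights of finite-dimensional $\mathfrak{sl}_{2}(\mathbb{C})$ representations are nonnegative, we must have $n-r \geq 0$, proving (1). The irreducible subrepresentation generated by such an $\alpha$ is $V_{n-r}$ of dimension $n-r+1$, spanned by $\alpha, L\alpha, \ldots, L^{n-r}\alpha$, all nonzero, with $L^{n-r+1}\alpha = 0$; this is (2).

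For (5) I apply the fiberwise decomposition of the preceding corollary. Because $\omega$ and $J$ are parallel, $\ker \Lambda \cap \Lambda^{r-2k}Q^{\ast}$ and its images under powers of $L$ form parallel subbundles, yielding the pointwise direct sum
\begin{equation*}
\Lambda^{r}Q^{\ast} \;=\; \bigoplus_{k \geq 0} L^{k}\bigl(\ker \Lambda \cap \Lambda^{r-2k}Q^{\ast}\bigr).
\end{equation*}
Since $\omega$ is basic and holonomy-invariant, passing to smooth basic sections yields (5). For (3), suppose $\phi \in \Omega_{B}^{r}(\mathcal{F})$ satisfies $L^{k}\phi = 0$ with $0 \leq k \leq n-r$; using (5) write $\phi = \sum_{j} L^{j}\beta_{j}$ with $\beta_{j} \in \Omega_{B,P}^{r-2j}(\mathcal{F})$. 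Each summand $L^{k+j}\beta_{j}$ lies in a distinct pointwise irreducible $V_{n-r+2j}$, so $L^{k}\phi = 0$ forces $L^{k+j}\beta_{j} = 0$ for every $j$. By (2) applied to $\beta_{j}$, this would require $k+j > n - (r-2j) = n-r+2j$, i.e., $k > n-r+j \geq n-r$, contradicting the hypothesis unless each $\beta_{j} = 0$; hence $\phi = 0$. For (4), pointwise injectivity of $L^{n-r}$ (immediate from (3) applied to the $\mathfrak{sl}_{2}$-module structure on each fiber) together with the equality of ranks $\mathrm{rank}\,\Lambda^{r}Q^{\ast}_{\mathbb{C}} = \binom{2n}{r} = \mathrm{rank}\,\Lambda^{2n-r}Q^{\ast}_{\mathbb{C}}$ makes $L^{n-r}\colon \Lambda^{r}Q^{\ast} \to \Lambda^{2n-r}Q^{\ast}$ a bundle isomorphism, hence surjective on smooth basic sections.

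The only real content beyond $\mathfrak{sl}_{2}$ bookkeeping is ensuring that the decomposition descends to basic forms; this is automatic because $\omega$ is basic and holonomy-invariant, $J$ is holonomy-invariant, and the transverse Levi-Civita connection preserves the primitive subbundle. I do not anticipate any genuine obstacle beyond carefully matching the weight conventions.
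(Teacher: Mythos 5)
Your proposal is correct and takes essentially the same route as the paper: both reduce the proposition to the fiberwise $\mathfrak{sl}_{2}(\mathbb{C})$ representation theory of the preceding lemma and corollary (primitive forms as highest-weight vectors, irreducible strings $\alpha, L\alpha, \dots, L^{n-r}\alpha$), use parallelness of $\Lambda$ coming from $\nabla J=0$ and $d\omega =0$ to globalize to basic sections, and prove (4) by combining pointwise injectivity with equality of bundle ranks. The only cosmetic differences are that you count ranks via $\binom{2n}{r}$ where the paper uses the transverse Hodge star $\bar{\ast}$, and you derive (3) explicitly from the decomposition (5) where the paper cites (2) directly.
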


\begin{proof}
We apply the Lemma and Corollary above to get (1)\ and (2)\ immediately.
Statement (3) follows from (2). For (4), note that pointwise the transverse
Hodge star $\overline{\ast }$ is an isomorphism from $\Omega _{B}^{r}\left( 
\mathcal{F}\right) \rightarrow \Omega _{B}^{2n-r}\left( \mathcal{F}\right) $%
, so the bundles have the same rank. Thus $L^{n-r}$ is a vector bundle
isomorphism from $\Omega _{B}^{r}\left( \mathcal{F}\right) \rightarrow
\Omega _{B}^{2n-r}\left( \mathcal{F}\right) $, so for all $\beta \in \Omega
_{B}^{2n-r}\left( \mathcal{F}\right) $, $\left( L^{n-r}\right) ^{-1}\beta
\in \Omega _{B}^{r}\left( \mathcal{F}\right) $ gets mapped to $\beta $, so $%
L^{k}$ is surjective for $k=n-r$.

Statement (5) follows from the fact that every $\mathfrak{sl}_{2}\left( 
\mathbb{C}\right) $ representation is a direct sum of irreducible
representations.
\end{proof}

\begin{lemma}
\label{LaplacianKahlerFormLemma}Let $(M,\mathcal{F},J,g_{Q})$ be a
transverse K\"{a}hler foliation with compact leaf closures and a compatible
bundle-like metric. Then 
\begin{equation*}
\Delta _{B}=\square _{B}+\overline{\square }_{B}+\partial
_{B}H^{0,1}\lrcorner +H^{0,1}\lrcorner \,\partial _{B}+\bar{\partial}%
_{B}H^{1,0}\lrcorner +H^{1,0}\lrcorner \,\bar{\partial}_{B}.
\end{equation*}
\end{lemma}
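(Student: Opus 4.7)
The plan is to reduce the identity to classical transverse Kähler identities plus explicit correction terms coming from Proposition \ref{delBstarFormulaProp}. First, I would expand
\begin{align*}
\Delta_B &= d_B\delta_B + \delta_B d_B \\
&= (\partial_B+\bar\partial_B)(\partial_B^*+\bar\partial_B^*) + (\partial_B^*+\bar\partial_B^*)(\partial_B+\bar\partial_B) \\
&= \square_B + \overline{\square}_B + \bigl(\partial_B\bar\partial_B^* + \bar\partial_B^*\partial_B\bigr) + \bigl(\bar\partial_B\partial_B^* + \partial_B^*\bar\partial_B\bigr),
\end{align*}
so the task is to identify the two cross-term anticommutators with $\partial_B H^{0,1}\lrcorner + H^{0,1}\lrcorner\,\partial_B$ and $\bar\partial_B H^{1,0}\lrcorner + H^{1,0}\lrcorner\,\bar\partial_B$, respectively.

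Next, I would substitute the splittings from Proposition \ref{delBstarFormulaProp}, namely $\partial_B^* = \partial_T^* + H^{1,0}\lrcorner$ and $\bar\partial_B^* = \bar\partial_T^* + H^{0,1}\lrcorner$. This decomposes each cross term into a \emph{transverse} piece (involving only $\partial_B,\bar\partial_B,\partial_T^*,\bar\partial_T^*$) plus the desired interior-product correction. What remains to be shown is that the transverse pieces vanish, i.e.
\[
\partial_B\bar\partial_T^* + \bar\partial_T^*\partial_B = 0, \qquad \bar\partial_B\partial_T^* + \partial_T^*\bar\partial_B = 0.
\]

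For these two identities I would invoke the Kähler commutation relations in Corollary \ref{LCor}, specifically $[\Lambda,\partial_B] = -i\bar\partial_T^*$ and $[\Lambda,\bar\partial_B] = i\partial_T^*$. Solving gives $\bar\partial_T^* = i(\partial_B\Lambda - \Lambda\partial_B)$, and then using $\partial_B^2 = 0$ one computes
\[
\partial_B\bar\partial_T^* = -i\,\partial_B\Lambda\partial_B, \qquad \bar\partial_T^*\partial_B = i\,\partial_B\Lambda\partial_B,
\]
whose sum is zero. The companion identity $\bar\partial_B\partial_T^* + \partial_T^*\bar\partial_B = 0$ follows by the conjugate argument using $\bar\partial_B^2 = 0$. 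Assembling the pieces produces exactly the claimed formula.

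The one subtlety to watch is \emph{not} to move $H^{1,0}\lrcorner$ or $H^{0,1}\lrcorner$ past $\partial_B$ or $\bar\partial_B$ prematurely, since $\partial_B H^{0,1}$ need not vanish (it represents the class $\eta$ of Theorem \ref{newClassDefinedTheorem}), so the two terms $\partial_B H^{0,1}\lrcorner$ and $H^{0,1}\lrcorner\,\partial_B$ are genuinely distinct and should be kept as written. Aside from this bookkeeping, the argument is a clean two-step: Kähler identities kill the transverse cross-terms, and the Álvarez-type decomposition of $\delta_B$ produces the stated $H^{1,0}, H^{0,1}$ interior-product corrections.
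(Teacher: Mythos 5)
Your proof is correct, and its skeleton coincides with the paper's: expand $\Delta_B=(\partial_B+\bar\partial_B)(\partial_B^*+\bar\partial_B^*)+(\partial_B^*+\bar\partial_B^*)(\partial_B+\bar\partial_B)$, then substitute $\partial_B^*=\partial_T^*+H^{1,0}\lrcorner$ and $\bar\partial_B^*=\bar\partial_T^*+H^{0,1}\lrcorner$ from Proposition \ref{delBstarFormulaProp}, which isolates the stated interior-product corrections. Where you genuinely diverge is in killing the transverse cross-terms $\partial_B\bar\partial_T^*+\bar\partial_T^*\partial_B$ and $\bar\partial_B\partial_T^*+\partial_T^*\bar\partial_B$: the paper observes that these are the pullbacks of $\partial\bar\partial^*+\bar\partial^*\partial$ and $\bar\partial\partial^*+\partial^*\bar\partial$ on the local quotient manifolds of the foliation charts, whose sum equals $\Delta-\square-\overline{\square}=0$ there by the classical K\"ahler identity; you instead derive the vanishing algebraically from the commutation relations of Corollary \ref{LCor} together with $\partial_B^2=\bar\partial_B^2=0$. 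Both routes use the K\"ahler hypothesis in an essential way, but yours is more self-contained given that Corollary \ref{LCor} is already quoted, and it is in fact exactly the argument the paper itself deploys later, in the lemma preceding the $dd_c$-Lemma in Section \ref{ddcSection}, where $-i\bar\partial_T^*=[\Lambda,\partial_B]$ is used to show $\bar\partial_T^*\partial_B+\partial_B\bar\partial_T^*=0$. One small slip: solving $[\Lambda,\partial_B]=-i\bar\partial_T^*$ gives $\bar\partial_T^*=i\left(\Lambda\partial_B-\partial_B\Lambda\right)$, the negative of what you wrote; this is harmless, since the anticommutator $\partial_B\bar\partial_T^*+\bar\partial_T^*\partial_B$ vanishes for either sign, but you should fix it for consistency. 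Your closing caution about not commuting $H^{0,1}\lrcorner$ past $\partial_B$ is well taken and matches the paper's point that $\partial_B\kappa_B^{0,1}$ need not vanish.
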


\begin{proof}
Since $d_{B}^{2}=d_{T}^{2}=0$, $\partial _{B}^{2}=\bar{\partial}%
_{B}^{2}=\partial _{B}\bar{\partial}_{B}+\bar{\partial}_{B}\partial _{B}=0$
and $\partial _{T}^{2}=\bar{\partial}_{T}^{2}=\partial _{T}\bar{\partial}%
_{T}+\bar{\partial}_{T}\partial _{T}=0$. By direct calculation, we have 
\begin{eqnarray*}
\Delta _{B} &=&\square _{B}+\overline{\square }_{B}+(\bar{\partial}%
_{B}\partial _{B}^{\ast }+\partial _{B}^{\ast }\bar{\partial}_{B})+(\partial
_{B}\bar{\partial}_{B}^{\ast }+\bar{\partial}_{B}^{\ast }\partial _{B}) \\
&=&\square _{B}+\overline{\square }_{B}+(\bar{\partial}_{B}\left( \partial
_{T}^{\ast }+H^{1,0}\lrcorner \,\right) +\left( \partial _{T}^{\ast
}+H^{1,0}\lrcorner \,\right) \bar{\partial}_{B}) \\
&&+(\partial _{B}\left( \bar{\partial}_{T}^{\ast }+H^{0,1}\lrcorner
\,\right) +\left( \bar{\partial}_{T}^{\ast }+H^{0,1}\lrcorner \,\right)
\partial _{B}) \\
&=&\square _{B}+\overline{\square }_{B}+\bar{\partial}_{T}^{\ast }\partial
_{B}+\partial _{B}\bar{\partial}_{T}^{\ast }+\partial _{T}^{\ast }\bar{%
\partial}_{B}+\bar{\partial}_{B}\partial _{T}^{\ast } \\
&&+\bar{\partial}_{B}H^{1,0}\lrcorner \,+H^{1,0}\lrcorner \bar{\partial}%
_{B}+\partial _{B}H^{0,1}\lrcorner +H^{0,1}\lrcorner \partial _{B}
\end{eqnarray*}

Observe that the term $\bar{\partial}_{T}^{\ast }\partial _{B}+\partial _{B}%
\bar{\partial}_{T}^{\ast }$ is just the term $\bar{\partial}^{\ast }\partial
+\partial \bar{\partial}^{\ast }$ on the local quotient manifolds of the
foliation charts, and also $\partial _{T}^{\ast }\bar{\partial}_{B}+\bar{%
\partial}_{B}\partial _{T}^{\ast }$ is $\partial ^{\ast }\bar{\partial}+\bar{%
\partial}\partial ^{\ast }$. The sum of these two terms is the same as $%
\Delta -\overline{\square }-\square $ on the foliation chart quotients,
which is zero since $(M,\mathcal{F},J,g_{Q})$ is transversely K\"{a}hler.
\end{proof}

Let $\mathcal{H}_{B}^{j}\left( \mathcal{F}\right) =\ker \left( \Delta
_{B}^{j}\right) :=\ker \left( \left. \Delta _{B}\right\vert _{\Omega
_{B}^{j}\left( \mathcal{F}\right) }\right) $.

\begin{corollary}
\label{weakInequalityCorollary}Let $(M,\mathcal{F},J,g_{Q})$ be a transverse
K\"{a}hler foliation of codimension $2n$ on a compact manifold. Then for $%
0\leq j\leq 2n$, $r,s\geq 0$ such that $r+s=j$,%
\begin{equation*}
\dim \left( \mathcal{H}_{B}^{j}\left( \mathcal{F}\right) \cap \Omega
_{B}^{r,s}\left( \mathcal{F}\right) \right) \leq \dim H_{B}^{r,s}\left( 
\mathcal{F}\right) .
\end{equation*}
\end{corollary}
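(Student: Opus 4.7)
The plan is to produce an explicit linear injection
\[
\mathcal{H}_{B}^{j}\left( \mathcal{F}\right) \cap \Omega _{B}^{r,s}\left( \mathcal{F}\right) \,\hookrightarrow\, \mathcal{H}_{\overline{\square}_{B}}^{r,s}\left( \mathcal{F}\right)
\]
and then conclude via the Dolbeault Hodge theorem (Theorem \ref{DolbeaultHodge}) that the right-hand side has dimension equal to $\dim H_{B}^{r,s}\left( \mathcal{F}\right) $. I do not expect to need Lemma \ref{LaplacianKahlerFormLemma} directly for this; the only input from the K\"ahler hypothesis will be what is packaged into Theorem \ref{DolbeaultHodge}.

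Step one: recall from the basic Hodge theorem for $\Delta _{B}$ on a compact manifold that $\alpha \in \mathcal{H}_{B}^{j}\left( \mathcal{F}\right) $ is equivalent to $d_{B}\alpha =0$ and $\delta _{B}\alpha =0$. Step two: use the decompositions $d_{B}=\partial _{B}+\bar{\partial}_{B}$ and $\delta _{B}=\partial _{B}^{\ast }+\bar{\partial}_{B}^{\ast }$, together with the bidegree bookkeeping. If $\alpha \in \Omega _{B}^{r,s}\left( \mathcal{F}\right) $, then
\[
\partial _{B}\alpha \in \Omega _{B}^{r+1,s},\qquad \bar{\partial}_{B}\alpha \in \Omega _{B}^{r,s+1},\qquad \partial _{B}^{\ast }\alpha \in \Omega _{B}^{r-1,s},\qquad \bar{\partial}_{B}^{\ast }\alpha \in \Omega _{B}^{r,s-1},
\]
and the four target bidegrees are pairwise distinct. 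Since the bidegree decomposition of complex basic forms is a direct sum, $d_{B}\alpha =0$ forces $\partial _{B}\alpha =\bar{\partial}_{B}\alpha =0$, and $\delta _{B}\alpha =0$ forces $\partial _{B}^{\ast }\alpha =\bar{\partial}_{B}^{\ast }\alpha =0$.

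Step three: in particular $\bar{\partial}_{B}\alpha =0$ and $\bar{\partial}_{B}^{\ast }\alpha =0$, hence $\overline{\square }_{B}\alpha =0$. Thus the natural inclusion
\[
\iota :\mathcal{H}_{B}^{j}\left( \mathcal{F}\right) \cap \Omega _{B}^{r,s}\left( \mathcal{F}\right) \longrightarrow \mathcal{H}_{\overline{\square }_{B}}^{r,s}\left( \mathcal{F}\right) ,\qquad \alpha \longmapsto \alpha ,
\]
is well defined and obviously injective. Step four: invoke Theorem \ref{DolbeaultHodge}, which provides $\mathcal{H}_{\overline{\square }_{B}}^{r,s}\left( \mathcal{F}\right) \cong H_{B}^{r,s}\left( \mathcal{F}\right) $, to obtain
\[
\dim \left( \mathcal{H}_{B}^{j}\left( \mathcal{F}\right) \cap \Omega _{B}^{r,s}\left( \mathcal{F}\right) \right) \leq \dim \mathcal{H}_{\overline{\square }_{B}}^{r,s}\left( \mathcal{F}\right) =\dim H_{B}^{r,s}\left( \mathcal{F}\right).
\]

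There is no real obstacle here; the only conceptual subtlety is that one might expect to need equality of $\Delta _{B}$ with $\square _{B}+\overline{\square }_{B}$ (which is precisely the automorphic case studied in Section \ref{automorphMeanCurvSection}). The observation that makes the weak inequality free is that an $\alpha $ of pure bidegree which is simultaneously $d_{B}$- and $\delta _{B}$-closed is automatically $\bar{\partial}_{B}$- and $\bar{\partial}_{B}^{\ast }$-closed, so no identity relating $\Delta _{B}$ to $\overline{\square }_{B}$ on general forms is required --- only on the particular $\alpha $ under consideration, where both Laplacians annihilate it for trivial reasons.
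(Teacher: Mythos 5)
Your proof is correct, and it takes a genuinely different --- and more elementary --- route than the paper's. The paper derives the corollary from Lemma \ref{LaplacianKahlerFormLemma}: the cross terms $\partial _{B}H^{0,1}\lrcorner +H^{0,1}\lrcorner \,\partial _{B}+\bar{\partial}_{B}H^{1,0}\lrcorner +H^{1,0}\lrcorner \,\bar{\partial}_{B}$ shift bidegree by $(+1,-1)$ and $(-1,+1)$, so for $\phi $ of pure type the $(r,s)$-component of $\Delta _{B}\phi $ is $\square _{B}\phi +\overline{\square }_{B}\phi $, whence $\ll \Delta _{B}\phi ,\phi \gg \,=\,\ll \square _{B}\phi ,\phi \gg +\ll \overline{\square }_{B}\phi ,\phi \gg $; nonnegativity of the three Laplacians then forces $\overline{\square }_{B}\phi =0$, and the Hodge theorem finishes exactly as in your Step four. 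You bypass the K\"{a}hler identity entirely, using instead the compact-manifold characterization $\Delta _{B}\alpha =0\Leftrightarrow d_{B}\alpha =\delta _{B}\alpha =0$ (legitimate here since $\delta _{B}$ is the formal adjoint of $d_{B}$ on basic forms, Proposition \ref{basicAdjointFormulasProp}, and $M$ is compact) together with bidegree separation, the pure bidegrees $(-1,0)$ and $(0,-1)$ of $\partial _{B}^{\ast }$ and $\bar{\partial}_{B}^{\ast }$ being visible from Proposition \ref{delBstarFormulaProp} or from adjointness and the orthogonality of the $(r,s)$-decomposition. Both arguments land the same injection into $\ker \overline{\square }_{B}$ and invoke Theorem \ref{DolbeaultHodge}. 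What your route buys: as you correctly anticipated, the K\"{a}hler hypothesis is never used outside Theorem \ref{DolbeaultHodge}, and since that theorem holds (per El Kacimi, as the paper itself notes) for general transverse Hermitian foliations, your argument actually proves the inequality in that broader setting; it also yields $\square _{B}\alpha =0$ for free, giving the companion inequality with $\square _{B}$-harmonic spaces. What the paper's route buys is narrative economy: Lemma \ref{LaplacianKahlerFormLemma} is the workhorse for the Hard Lefschetz Theorem \ref{HardLefschetzTheorem} and Theorem \ref{automorphicMeanCurvTheorem}, so deriving the corollary from it keeps the section's machinery in one place, but logically the corollary does not need it, and your argument is airtight.
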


\begin{proof}
From the Lemma above, for any $\left( r,s\right) $-form $\phi $, we have $%
\Delta _{B}\phi \cap \Omega _{B}^{r,s}=\left( \square _{B}\phi +\overline{%
\square }_{B}\phi \right) $, so 
\begin{equation*}
\ll \Delta _{B}\phi ,\phi \gg =\ll \square _{B}\phi ,\phi \gg +\ll \overline{%
\square }_{B}\phi ,\phi \gg .
\end{equation*}%
Since $\Delta _{B}$, $\square _{B}$, $\overline{\square }_{B}$ are
nonnegative operators, if $\phi \in \mathcal{H}_{B}^{j}\left( \mathcal{F}%
\right) \cap \Omega _{B}^{r,s}\left( \mathcal{F}\right) $, then $\overline{%
\square }_{B}\phi =0$. The result follows from the Hodge theorem.
\end{proof}

\begin{theorem}
\label{HardLefschetzTheorem}(\textbf{Hard Lefschetz Theorem. }Proved in \cite%
[Th\'{e}or\'{e}me 3.4.6]{EK} for the case of minimal transverse K\"{a}hler
foliations) Let $(M,\mathcal{F},J,g_{Q})$ be a transverse K\"{a}hler
foliation of codimension $2n$ on a compact Riemannian manifold with
bundle-like metric $g_{M}$. Suppose that the class $\left[ \partial
_{B}\kappa _{B}^{0,1}\right] \in H_{\partial _{B}\bar{\partial}%
_{B}}^{1,1}\left( \mathcal{F}\right) $ is trivial. Then the Hard Lefschetz
Theorem holds for basic Dolbeault cohomology. That is, the map%
\begin{equation*}
L^{k}:H_{B}^{r}\left( \mathcal{F}\right) \rightarrow H_{B}^{r+2k}\left( 
\mathcal{F}\right)
\end{equation*}%
is injective for $0\leq k\leq n-r$ and surjective for $k\geq n-r$, $k\geq 0$%
. Furthermore,%
\begin{eqnarray}
H_{B}^{r}\left( \mathcal{F}\right) &=&\bigoplus_{k\geq
0}L^{k}H_{B,P}^{r-2k}\left( \mathcal{F}\right) ,  \label{HL1} \\
H_{B}^{r,s}\left( \mathcal{F}\right) &=&\bigoplus_{k\geq
0}L^{k}H_{B,P}^{r-k,s-k}\left( \mathcal{F}\right) .  \label{HL2}
\end{eqnarray}
\end{theorem}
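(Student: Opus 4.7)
The plan is to first reduce to a compatible bundle-like metric where $\partial_B \kappa_B^{0,1}$ vanishes identically, then show that $L$ and $\Lambda$ commute with both $\Delta_B$ and $\overline{\square}_B$, so that the $\mathfrak{sl}_2(\mathbb{C})$-action on forms restricts to the finite-dimensional harmonic spaces, and finally to invoke the algebraic structure theorem of Proposition~\ref{LFormProp} on these.

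Since $[\partial_B \kappa_B^{0,1}]$ vanishes in $H_{\partial_B \bar\partial_B}^{1,1}(\mathcal{F})$, Remark~\ref{realizationOfClassRep} permits us to conformally rescale the leafwise metric by a positive real-valued basic function, producing a new bundle-like metric (still compatible with $g_Q$) whose basic mean curvature satisfies $\partial_B \kappa_B^{0,1} = 0$. Combining with the identity $\partial_B \kappa_B^{0,1} + \bar\partial_B \kappa_B^{1,0} = 0$ from Proposition~\ref{holomorphicKappaClosedProp}, this also yields $\bar\partial_B \kappa_B^{1,0} = 0$. Fix this metric for the remainder.

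Under this choice, I claim $[L, \Delta_B] = [L, \overline{\square}_B] = 0$, which is the heart of the argument and the principal obstacle. Proposition~\ref{LaplCommutesWithLProp} immediately gives $[\overline{\square}_B, L] = i\epsilon(\bar\partial_B \kappa_B^{1,0}) = 0$ and $[\square_B, L] = 0$. For the full Laplacian, expand $\Delta_B$ via Lemma~\ref{LaplacianKahlerFormLemma} into $\square_B + \overline{\square}_B$ plus the four remaining mean-curvature terms. Using $[L, \partial_B] = [L, \bar\partial_B] = 0$ from Corollary~\ref{LCor} together with the commutator $[L, X\lrcorner] = \epsilon((JX)^\flat)$ from (\ref{LCommutatorFormulas}) applied to $X = H^{0,1}$ and $X = H^{1,0}$, a direct computation collapses
\begin{equation*}
[L, \partial_B H^{0,1}\lrcorner + H^{0,1}\lrcorner \partial_B] \quad \text{and} \quad [L, \bar\partial_B H^{1,0}\lrcorner + H^{1,0}\lrcorner \bar\partial_B]
\end{equation*}
to scalar multiples of $\epsilon(\partial_B \kappa_B^{1,0})$ and $\epsilon(\bar\partial_B \kappa_B^{0,1})$ respectively, both of which vanish by Proposition~\ref{holomorphicKappaClosedProp}. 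Hence $[L, \Delta_B] = 0$, and taking formal adjoints yields $[\Lambda, \Delta_B] = [\Lambda, \overline{\square}_B] = 0$.

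Therefore $L$ and $\Lambda$ preserve the finite-dimensional harmonic spaces $\mathcal{H}_B^r(\mathcal{F})$ and $\mathcal{H}_{\overline{\square}_B}^{r,s}(\mathcal{F})$; together with $[\Lambda, L] = \sum (n-r) P_r$ they furnish an $\mathfrak{sl}_2(\mathbb{C})$-representation on each space. The structure theorem behind Proposition~\ref{LFormProp} then applies verbatim at the harmonic level, yielding injectivity of $L^k$ on $\mathcal{H}_B^r$ for $0 \leq k \leq n-r$, bijectivity of $L^{n-r} : \mathcal{H}_B^r \to \mathcal{H}_B^{2n-r}$ (and hence surjectivity of $L^k$ for all $k \geq n-r$), and the primitive decomposition $\mathcal{H}_B^r = \bigoplus_k L^k \mathcal{H}_{B,P}^{r-2k}$. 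Passing through the isomorphism $\mathcal{H}_B^r \cong H_B^r(\mathcal{F})$ and identifying primitive harmonic classes with primitive cohomology classes proves (HL1). Applying the same argument to $\mathcal{H}_{\overline{\square}_B}^{r,s}(\mathcal{F})$---now using that $L$ and $\Lambda$ shift bidegree by $(+1,+1)$ and $(-1,-1)$---combined with the Hodge isomorphism $\mathcal{H}_{\overline{\square}_B}^{r,s} \cong H_B^{r,s}(\mathcal{F})$ from Theorem~\ref{DolbeaultHodge} gives (HL2).
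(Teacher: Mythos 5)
Your proposal is correct and follows essentially the same route as the paper's proof: normalize the leafwise metric via Remark \ref{realizationOfClassRep} so that $\partial _{B}\kappa _{B}^{0,1}=0$, then combine Proposition \ref{LaplCommutesWithLProp}, Lemma \ref{LaplacianKahlerFormLemma}, and the commutator identities (\ref{LCommutesWithDiffs}) and (\ref{LCommutatorFormulas}) to reduce $\left[ L,\Delta _{B}\right] $ to $-i\epsilon \left( \partial _{B}\kappa _{B}^{1,0}\right) +i\epsilon \left( \bar{\partial}_{B}\kappa _{B}^{0,1}\right) =0$, and conclude via Proposition \ref{LFormProp}. Your only departure is to spell out the step the paper leaves implicit, namely that the $\mathfrak{sl}_{2}\left( \mathbb{C}\right) $-action restricts to the finite-dimensional harmonic spaces and passes to cohomology through the basic Hodge isomorphisms, which is a legitimate and welcome expansion rather than a different method.
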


\begin{proof}
If $\left[ \partial _{B}\kappa _{B}^{0,1}\right] \in H_{\partial _{B}\bar{%
\partial}_{B}}^{1,1}\left( \mathcal{F}\right) $ is trivial, we first modify
the leafwise metric as in Remark \ref{realizationOfClassRep} without
changing the transverse K\"{a}hler structure, so that $\partial _{B}\kappa
_{B}^{0,1}=0$. By Proposition \ref{LaplCommutesWithLProp}, in the new
metric, $\left[ L,\overline{\square }_{B}+\square _{B}\right] =0$, so that
by Lemma \ref{LaplacianKahlerFormLemma}, we have%
\begin{eqnarray*}
\left[ L,\Delta _{B}\right] &=&\left[ L,\partial _{B}H^{0,1}\lrcorner \right]
+\left[ L,H^{0,1}\lrcorner \,\partial _{B}\right] +\left[ L,\bar{\partial}%
_{B}H^{1,0}\lrcorner \right] +\left[ L,H^{1,0}\lrcorner \,\bar{\partial}_{B}%
\right] \\
&=&\partial _{B}\left[ L,H^{0,1}\lrcorner \right] +\left[ L,H^{0,1}\lrcorner
\,\right] \partial _{B}+\bar{\partial}_{B}\left[ L,H^{1,0}\lrcorner \right] +%
\left[ L,H^{1,0}\lrcorner \,\right] \bar{\partial}_{B} \\
&=&-i\left\{ \partial _{B}\epsilon \left( \kappa _{B}^{1,0}\right) +\epsilon
\left( \kappa _{B}^{1,0}\right) \partial _{B}\right\} +i\left\{ \overline{%
\partial }_{B}\epsilon \left( \kappa _{B}^{0,1}\right) +\epsilon \left(
\kappa _{B}^{0,1}\right) \overline{\partial }_{B}\right\} , \\
&=&-i\epsilon \left( \partial _{B}\kappa _{B}^{1,0}\right) +i\epsilon \left( 
\overline{\partial }_{B}\kappa _{B}^{0,1}\right) =0,
\end{eqnarray*}%
using (\ref{LCommutesWithDiffs}), (\ref{LCommutatorFormulas}), and the fact
that 
\begin{equation*}
\left( JH^{0,1}\right) ^{\flat }=-i\kappa _{B}^{1,0},~\left( JH^{1,0}\right)
^{\flat }=i\kappa _{B}^{0,1},
\end{equation*}%
which follows from (\ref{HKappaFormulas}). The first statement, (\ref{HL1}),
and (\ref{HL2}) follow from this calculation, the fact that $\left[ L,%
\overline{\square }_{B}\right] =\left[ L,\square _{B}\right] =0$, and
Proposition \ref{LFormProp}.
\end{proof}

\begin{corollary}
\label{classTrivialImpliesAlvClassTrivialCor}Let $(M,\mathcal{F},J,g_{Q})$
be a transverse K\"{a}hler foliation of codimension $2n$ on a compact
Riemannian manifold with bundle-like metric $g_{M}$. Then the following are
equivalent:

\begin{enumerate}
\item The class $\left[ \kappa _{B}\right] \in H_{B}^{1}\left( \mathcal{F}%
\right) $ is trivial; that is, $\left( M,\mathcal{F}\right) $ is taut.

\item The class $\left[ \partial _{B}\kappa _{B}^{0,1}\right] \in
H_{\partial _{B}\bar{\partial}_{B}}^{1,1}\left( \mathcal{F}\right) $ is
trivial.

\item The Hard Lefschetz Theorem holds for basic Dolbeault cohomology.
\end{enumerate}

\begin{proof}
(1) clearly implies (2), and (2) implies (3) by Theorem \ref%
{HardLefschetzTheorem}. Suppose that (3) holds. Then $L^{n}:H_{B}^{0}\left( 
\mathcal{F}\right) $ $\rightarrow H_{B}^{2n}\left( \mathcal{F}\right) $ is
an isomorphism. Suppose $\left( M,\mathcal{F}\right) $ is not taut; then $%
H_{B}^{2n}\left( \mathcal{F}\right) =\left\{ 0\right\} $, by \cite[Corollary
6.2]{Al}. Then in particular, $L^{n}\left( 1\right) =\omega ^{n}$ is an
exact form, as well as being a basic harmonic form, by the proof of Theorem %
\ref{HardLefschetzTheorem}. Thus, by the basic Hodge theorem, $\omega ^{n}=0$%
, a contradiction to the nondegeneracy of $\omega $. Thus, $\left( M,%
\mathcal{F}\right) $ must in fact be taut, so (1) holds.
\end{proof}
\end{corollary}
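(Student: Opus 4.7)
The plan is to close the cycle of implications $(1) \Rightarrow (2) \Rightarrow (3) \Rightarrow (1)$. Of these, the first is formal, the second is exactly Theorem~\ref{HardLefschetzTheorem} proved above, and the real content lies in $(3) \Rightarrow (1)$.

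For $(1) \Rightarrow (2)$: if $\kappa_B = d_B f$ for some basic function $f$, then projecting onto type $(0,1)$ yields $\kappa_B^{0,1} = \bar{\partial}_B f$, whence
\begin{equation*}
\partial_B \kappa_B^{0,1} = \partial_B \bar{\partial}_B f,
\end{equation*}
which is manifestly $\partial_B \bar{\partial}_B$-exact, so its class in $H^{1,1}_{\partial_B \bar{\partial}_B}(\mathcal{F})$ is trivial. The implication $(2) \Rightarrow (3)$ is a direct invocation of Theorem~\ref{HardLefschetzTheorem}, which is the Hard Lefschetz Theorem established under precisely the hypothesis that $[\partial_B \kappa_B^{0,1}]$ vanishes.

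For $(3) \Rightarrow (1)$, I would argue by contradiction. Assume the Hard Lefschetz Theorem holds; in particular, the top-degree map
\begin{equation*}
L^{n}\colon H_B^{0}(\mathcal{F}) \longrightarrow H_B^{2n}(\mathcal{F})
\end{equation*}
is an isomorphism. Since $M$ is connected, $H_B^{0}(\mathcal{F}) \cong \mathbb{R}$ is generated by the nonzero class of the constant function $1$, so $[\omega^n] = L^n[1]$ is a nonzero class in $H_B^{2n}(\mathcal{F})$. If $(M,\mathcal{F})$ were not taut, however, then by \'{A}lvarez's characterization of tautness through Poincar\'{e} duality (\cite[Cor.~6.2]{Al}), the top basic cohomology would vanish: $H_B^{2n}(\mathcal{F}) = 0$, contradicting $[\omega^n] \neq 0$. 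Therefore $(M,\mathcal{F})$ must be taut, which is $(1)$.

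The principal obstacle is the $(3) \Rightarrow (1)$ step, whose essential external ingredient is \'{A}lvarez's theorem that the top basic cohomology of a non-taut transversely oriented Riemannian foliation vanishes. Once this characterization is accepted, the argument is brief: the Lefschetz isomorphism in top degree, together with the non-triviality of the constant function on a connected manifold and the vanishing of $H_B^{2n}$ for non-taut foliations, yields the contradiction immediately, and the nondegeneracy of $\omega$ enters only implicitly, through the definition of $\omega^n$ as a nonzero transverse volume form whose cohomology class equals $L^n[1]$.
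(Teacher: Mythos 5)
Your proposal is correct, and it follows the same overall architecture as the paper: the cycle $(1)\Rightarrow(2)\Rightarrow(3)\Rightarrow(1)$, with the same single external ingredient for the hard step, namely \'{A}lvarez's result (\cite[Corollary 6.2]{Al}) that $H_{B}^{2n}\left( \mathcal{F}\right) =0$ when $\left( M,\mathcal{F}\right) $ is not taut. The genuine difference is in how $(3)\Rightarrow(1)$ reaches its contradiction. The paper argues that $L^{n}\left( 1\right) =\omega ^{n}$ is exact (since $H_{B}^{2n}\left( \mathcal{F}\right) =0$) \emph{and} basic harmonic ``by the proof of Theorem \ref{HardLefschetzTheorem},'' then invokes the basic Hodge theorem to conclude $\omega ^{n}=0$, contradicting nondegeneracy of $\omega $. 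Your route is more elementary: since $H_{B}^{0}\left( \mathcal{F}\right) \cong \mathbb{R}\neq 0$ (locally constant basic functions; in particular $\left[ 1\right] \neq 0$), an isomorphism $L^{n}:H_{B}^{0}\left( \mathcal{F}\right) \rightarrow H_{B}^{2n}\left( \mathcal{F}\right) $ is flatly incompatible with $H_{B}^{2n}\left( \mathcal{F}\right) =0$, so no harmonic representative is needed at all. This buys you something real: the paper's appeal to harmonicity of $\omega ^{n}$ is delicate, because the harmonicity established in the proof of Theorem \ref{HardLefschetzTheorem} rests on the metric normalization $\partial _{B}\kappa _{B}^{0,1}=0$, which presupposes hypothesis (2) --- precisely what the implication $(3)\Rightarrow(1)$ may not assume. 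Your argument uses only the statement of the Lefschetz property in top degree plus nonvanishing of $H_{B}^{0}$; in fact injectivity of $L^{n}$ alone suffices, so your closing remark that the nondegeneracy of $\omega $ enters ``implicitly'' overstates its role --- it is not needed anywhere in your version.
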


\begin{remark}
On nonK\"{a}hler transverse Hermitian foliations, it is quite possible for $%
\left( M,\mathcal{F}\right) $ to be nontaut and for $\left[ \partial
_{B}\kappa _{B}^{0,1}\right] \in H_{\partial _{B}\bar{\partial}%
_{B}}^{1,1}\left( \mathcal{F}\right) $ to be trivial, even zero. See the
Examples section.
\end{remark}

\begin{remark}
The corollary implies that tautness for transverse K\"{a}hler foliations is
characterized by the weaker condition that $\left[ \partial _{B}\kappa
_{B}^{0,1}\right] \in H_{\partial _{B}\bar{\partial}_{B}}^{1,1}\left( 
\mathcal{F}\right) $ is trivial. Also, it tell us that if the class is
nontrivial for a nontaut transverse Hermitian foliation, that foliation does
not admit a transverse K\"{a}hler structure. Thus, the Hard Lefschetz
Theorem in \cite{EK} cannot be generalized to nontaut transverse K\"{a}hler
foliations.
\end{remark}

\begin{remark}
Since $\partial _{B}\kappa _{B}^{0,1}$ is $\partial _{B}$-exact and $%
\overline{\partial }_{B}$-closed and $d$-closed, the class $\left[ \partial
_{B}\kappa _{B}^{0,1}\right] $ measures the failure of the classical $%
\partial \overline{\partial }$-lemma (or $dd_{c}$-Lemma) to hold in the case
of transverse K\"{a}hler foliations, specificially applied to the mean
curvature form. Thus, in general we do not expect the basic cohomology to be
formal or to satisfy the typical properties of that of ordinary K\"{a}hler
manifolds. In Section \ref{ddcSection}, we find sufficient conditions for
the transverse $dd_{c}$-Lemma to hold.
\end{remark}

\section{Case of automorphic mean curvature\label{automorphMeanCurvSection}}

The set of foliate vector fields is 
\begin{equation*}
V\left( \mathcal{F}\right) =\left\{ Y\in \Gamma \left( TM\right) :\left[ X,Y%
\right] \in \Gamma \left( T\mathcal{F}\right) \text{ for all }X\in \Gamma
\left( T\mathcal{F}\right) \right\} ,
\end{equation*}%
and it consists of the set of vector fields whose flows preserve $\mathcal{F}
$. For any $X\in V\left( \mathcal{F}\right) $, $\pi \left( X\right) $ is a
basic section of $Q$, meaning that $\nabla _{v}\pi \left( X\right) =0$ for
every $v$ in $T\mathcal{F}$. We say that a vector field $Y\in V\left( 
\mathcal{F}\right) $ is \textbf{transversely automorphic} if $\mathcal{L}%
_{Y}J=0$, so that $\left[ Y,J\pi \left( X\right) \right] =J\pi \left[ Y,X%
\right] $ for all $X\in V\left( \mathcal{F}\right) $. Such vector fields are
infinitesimal automorphisms of the foliation that preserve the transverse
complex structure. Sometimes we also refer to the image $\pi \left( Y\right)
\in \Gamma \left( Q\right) $ as being transversely automorphic, because the
property only depends on the properties of $\pi \left( Y\right) $.

For a complex basic normal vector field $Z\in \Gamma _{B}Q^{1,0}$, we say $Z$
is \textbf{transversely holomorphic} if $\nabla _{\bar{V}}Z=0$ for $\bar{V}%
\in Q^{0,1}$. This is equivalent to $Z$ being a basic vector field that can
be expressed as a holomorphic vector field in the transverse variables of
the local foliation charts. The following results have been previously
proved.

\begin{lemma}
\cite[Proposition 3.3]{JR2}Let $\left( M,\mathcal{F},J,g_{Q}\right) $ be a
transverse K\"{a}hler foliation with compatible bundle-like metric. The
field $X\in V\left( \mathcal{F}\right) $ is transversely automorphic if and
only if $\nabla _{JY}\pi \left( X\right) =J\nabla _{Y}\pi \left( X\right) $
for all $Y\in V\left( \mathcal{F}\right) $.
\end{lemma}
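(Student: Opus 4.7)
The plan is to reduce the assertion to the standard Kähler-manifold identity $\mathcal{L}_X J = 0 \iff \nabla_{JY} X = J\nabla_Y X$, realized intrinsically on the normal bundle $Q$. Two ingredients make this work: the transverse Kähler condition $\nabla J = 0$ (so $\nabla$ commutes with $J$ on sections of $Q$), and the transverse torsion-free property of the Levi-Civita connection, which for foliate $A, B \in V(\mathcal{F})$ reads
\begin{equation*}
\pi[A, B] \;=\; \nabla_A \pi(B) - \nabla_B \pi(A).
\end{equation*}

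First I would compute $(\mathcal{L}_X J)(\pi(Y))$ directly. Because $X \in V(\mathcal{F})$ and $J$ is holonomy-invariant, $J\pi(Y)$ is represented by a foliate lift (call it $JY$, using the extension $J(\pi(v))$ of the excerpt), and the Lie derivative on sections of $Q$ satisfies $\mathcal{L}_X \pi(Z) = \pi[X,Z]$ for foliate $Z$. Therefore
\begin{equation*}
(\mathcal{L}_X J)(\pi(Y)) \;=\; \pi[X, JY] \;-\; J\pi[X,Y].
\end{equation*}
Now apply torsion-freeness to each bracket and use $\nabla J = 0$ to commute $J$ past $\nabla_X$ in the first term; the two $\nabla_X$-contributions cancel, leaving
\begin{equation*}
(\mathcal{L}_X J)(\pi(Y)) \;=\; J\nabla_Y \pi(X) - \nabla_{JY}\pi(X).
\end{equation*}

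The Lemma follows: the right-hand side vanishes for every $Y \in V(\mathcal{F})$ precisely when $\nabla_{JY}\pi(X) = J\nabla_Y\pi(X)$ for all such $Y$. The only point that requires brief justification is that restricting $Y$ to $V(\mathcal{F})$ is not a loss of generality, since both sides of the identity depend on $Y$ only through its pointwise value (tensoriality in the direction of differentiation), and any tangent vector at a point is the value of some foliate vector field in a neighborhood. I do not anticipate a real obstacle; the argument is the pointwise Kähler-manifold calculation faithfully transcribed into the transverse Levi-Civita framework, and every step uses only properties already established for transverse Kähler foliations.
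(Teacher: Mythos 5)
Your proof is correct. The chain of identities goes through exactly as you describe: $J\pi(Y)$ is basic because $\pi(Y)$ is basic and $J$ is holonomy-invariant, hence admits a foliate lift; torsion-freeness of the transverse Levi-Civita connection applied to both brackets, together with $\nabla J=0$ (this is precisely where the transverse K\"ahler hypothesis is used --- the statement would fail for a merely transverse Hermitian structure), gives $(\mathcal{L}_{X}J)(\pi(Y))=J\nabla_{Y}\pi(X)-\nabla_{JY}\pi(X)$ after the two $\nabla_{X}$-terms cancel, and your closing tensoriality remark is the right justification that quantifying over foliate $Y$ loses nothing, since $\mathcal{L}_{X}J$ and the right-hand side are both $C^{\infty}$-linear in their argument and values of local foliate fields span $Q$ pointwise. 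Note that the paper itself states this lemma only as a citation to \cite[Proposition 3.3]{JR2} and reproduces no proof, so there is no in-paper argument to diverge from; your derivation is the standard transcription of the K\"ahler-manifold identity $\mathcal{L}_{X}J=0\iff\nabla_{JY}X=J\nabla_{Y}X$ into the normal-bundle framework, and it supplies a complete proof where the paper has none.
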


\begin{lemma}
\cite[Proposition 3.3]{JR2}\label{automorphHolomorphKahlerLemma}Let $\left(
M,\mathcal{F},J,g_{Q}\right) $ be a transverse K\"{a}hler foliation with
compatible bundle-like metric. The field $X\in V\left( \mathcal{F}\right) $
is transversely automorphic if and only if the complex normal vector field $%
Z=\pi (X)-iJ\pi (X)\in \Gamma _{B}Q^{1,0}$ is transversely holomorphic. A
complex basic normal vector field $W\in \Gamma _{B}Q^{1,0}$ is transversely
holomorphic if and only if the field $W+\overline{W}\in \Gamma _{B}Q$ is
transversely automorphic.
\end{lemma}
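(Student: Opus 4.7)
The plan is to reduce the lemma to the preceding characterization of transversely automorphic fields (namely, that $X\in V(\mathcal{F})$ is transversely automorphic iff $\nabla_{JY}\pi(X)=J\nabla_{Y}\pi(X)$ for all $Y\in V(\mathcal{F})$) via a direct complexification argument. The key ingredient is that on a transverse K\"ahler foliation, $\nabla J=0$, so the transverse Levi-Civita connection extends $\mathbb{C}$-linearly to $Q^{C}=Q^{1,0}\oplus Q^{0,1}$ and commutes with $J$.

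For the first equivalence, set $Z=\pi(X)-iJ\pi(X)\in\Gamma_{B}Q^{1,0}$ and observe that an arbitrary $\bar{V}\in Q^{0,1}$ has the form $\bar{V}=Y+iJY$ for some real $Y\in\Gamma(Q)$ (lifted, if needed, to a foliate vector field). Using $\nabla J=0$, I would expand
\begin{equation*}
\nabla_{\bar{V}}Z=\bigl(\nabla_{Y}\pi(X)+J\nabla_{JY}\pi(X)\bigr)+i\bigl(\nabla_{JY}\pi(X)-J\nabla_{Y}\pi(X)\bigr).
\end{equation*}
Since $J^{2}=-\mathrm{id}$ on $Q$, the vanishing of either the real or imaginary part is equivalent to $\nabla_{JY}\pi(X)=J\nabla_{Y}\pi(X)$. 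Thus $\nabla_{\bar{V}}Z=0$ for all $\bar{V}\in Q^{0,1}$ iff $\nabla_{JY}\pi(X)=J\nabla_{Y}\pi(X)$ for all $Y$, which by the previous lemma is equivalent to $X$ being transversely automorphic.

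For the second equivalence, I would use the fact that any $W\in\Gamma_{B}Q^{1,0}$ can be written as $W=X-iJX$, where $X=\tfrac{1}{2}(W+\overline{W})\in\Gamma_{B}Q$ is a real basic section (which lifts to some $\widetilde{X}\in V(\mathcal{F})$ with $\pi(\widetilde{X})=X$, since basicness of $\pi(\widetilde{X})$ is exactly foliateness of $\widetilde{X}$). Then $Z:=\pi(\widetilde{X})-iJ\pi(\widetilde{X})=W$, so by the first equivalence $W$ is transversely holomorphic iff $\widetilde{X}$ is transversely automorphic, iff $W+\overline{W}=2X=2\pi(\widetilde{X})$ is (since the automorphic property depends only on $\pi(\widetilde{X})$, as noted in the paper).

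There is no serious obstacle here; the only subtle point to check carefully is that $\nabla$ truly passes through both $J$ and complex scalars when expanding $\nabla_{Y+iJY}(\pi(X)-iJ\pi(X))$, which is guaranteed by the K\"ahler identity $\nabla J=0$ together with the $\mathbb{C}$-linear extension of $\nabla$ to $Q^{C}$. Once that expansion is in hand, matching real and imaginary parts makes everything collapse to the criterion of the previous lemma.
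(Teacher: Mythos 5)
Your proof is correct and is essentially the paper's own route: the lemma is simply quoted from \cite[Proposition 3.3]{JR2} together with the criterion $\nabla _{JY}\pi \left( X\right) =J\nabla _{Y}\pi \left( X\right) $, and your complexified expansion of $\nabla _{Y+iJY}\left( \pi (X)-iJ\pi (X)\right) $ using $\nabla J=0$ --- with the observation that applying $J$ to the real part $\nabla _{Y}\pi (X)+J\nabla _{JY}\pi (X)$ gives minus the imaginary part, so each vanishes exactly when the other does --- is the standard computation behind that citation. Your handling of the second equivalence is likewise the intended one, resting on $Q^{1,0}=\{X-iJX\mid X\in Q\}$, the surjectivity of $\pi :V\left( \mathcal{F}\right) \rightarrow \Gamma _{B}Q$ onto basic sections, and the paper's remark that the automorphic property depends only on $\pi \left( X\right) $.
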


As in the last section, we use local orthonormal basic frames $\left\{
V_{a}\right\} $ for $Q^{1,0}$ and $\left\{ \omega _{a}\right\} $ for $\left(
Q^{1,0}\right) ^{\ast }=Q_{1,0}$.

\begin{proposition}
\cite[Lemma 3.15]{JR2}\label{transvHolomIFFProp}Let $(M,\mathcal{F},J,g_{Q})$
be a transverse Hermitian foliation. Let $Z\in \Gamma _{B}Q^{1,0}$. Then the
following are equivalent.

\begin{enumerate}
\item $Z$ is transversely holomorphic.

\item $Z\ $satisfies $\bar{\partial}_{B}Z\lrcorner +Z\lrcorner \,\bar{%
\partial}_{B}=0$.
\end{enumerate}
\end{proposition}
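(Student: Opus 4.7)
The plan is to mimic the structure of Lemma \ref{delKappa=0Lemma} almost verbatim, with $(1,0)$ and $(0,1)$ swapped and with $\partial_{B}$ replaced by $\bar{\partial}_{B}$. The heart of the proof is the pointwise identity
\begin{equation*}
\bar{\partial}_{B}\circ(Z\lrcorner) + (Z\lrcorner)\circ\bar{\partial}_{B}
   \;=\; \sum_{a=1}^{n}\bar{\omega}^{a}\wedge(\nabla_{\bar{V}_{a}}Z)\lrcorner,
\end{equation*}
regarded as operators on locally defined basic forms. Once this identity is in hand, the equivalence (1)$\Leftrightarrow$(2) reduces to showing that the right-hand side vanishes as an operator if and only if $\nabla_{\bar{V}}Z=0$ for every $\bar{V}\in Q^{0,1}$.

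To derive the identity, I would fix a point of $M$ and choose a local orthonormal $J$-basic frame $\{V_{a}\}$ of $Q^{1,0}$ with dual coframe $\{\omega^{a}\}$, normal at that point (so that $\nabla V_{a}$ and $\nabla\omega^{a}$ vanish there). At that point $\bar{\partial}_{B}=\sum_{a}\bar{\omega}^{a}\wedge\nabla_{\bar{V}_{a}}$. Since $Z\in Q^{1,0}$ one has $Z\lrcorner\bar{\omega}^{a}=\bar{\omega}^{a}(Z)=0$, and hence $Z\lrcorner(\bar{\omega}^{a}\wedge\alpha)=-\bar{\omega}^{a}\wedge(Z\lrcorner\alpha)$ for any form $\alpha$. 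Combining this with the Leibniz rule for the covariant derivative of a contraction yields, for any $\phi\in\Omega_{B}^{r,s}(\mathcal{F})$,
\begin{equation*}
\bar{\partial}_{B}(Z\lrcorner\phi)+Z\lrcorner\bar{\partial}_{B}\phi
 \;=\; \sum_{a}\bar{\omega}^{a}\wedge(\nabla_{\bar{V}_{a}}Z)\lrcorner\phi,
\end{equation*}
which is the asserted identity. Both sides are tensorial in $\phi$, so the pointwise computation gives a global identity of operators.

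For the equivalence, recall that in the transverse Hermitian setting $\nabla$ preserves the type decomposition $Q^{C}=Q^{1,0}\oplus Q^{0,1}$, so each $\nabla_{\bar{V}_{a}}Z$ lies in $Q^{1,0}$ and may be written $\nabla_{\bar{V}_{a}}Z=\sum_{c}c_{a}^{c}V_{c}$ for basic functions $c_{a}^{c}$. If $Z$ is transversely holomorphic then every $c_{a}^{c}=0$, so the right-hand side of the identity vanishes and (2) holds. Conversely, if (2) holds then testing the right-hand side against $\phi=\omega^{b}$ at the chosen point yields $\sum_{a}c_{a}^{b}\bar{\omega}^{a}=0$ for every $b$, which forces $c_{a}^{b}=0$ for all $a,b$; since $\{\bar{V}_{a}\}$ spans $Q^{0,1}$, this gives $\nabla_{\bar{V}}Z=0$ for all $\bar{V}\in Q^{0,1}$, i.e.\ $Z$ is transversely holomorphic.

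The only mild obstacle is bookkeeping the type calculation $Z\lrcorner\bar{\omega}^{a}=0$ and confirming that $\nabla$ preserves $Q^{1,0}$ and $Q^{0,1}$; both are standard consequences of the transverse Hermitian hypothesis, and the normal-frame trick then reduces the verification to a direct algebraic computation.
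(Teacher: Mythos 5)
Your proof is correct and is essentially the paper's own argument. The paper cites \cite[Lemma 3.15]{JR2} rather than proving Proposition \ref{transvHolomIFFProp} in-text, but its proof of the conjugate statement, Lemma \ref{delKappa=0Lemma}, is exactly your computation: a $J$-adapted normal frame, the anticommutator identity $\bar{\partial}_{B}\circ (Z\lrcorner )+(Z\lrcorner )\circ \bar{\partial}_{B}=\sum_{a}\bar{\omega}^{a}\wedge (\nabla _{\bar{V}_{a}}Z)\lrcorner $ obtained from $\bar{\omega}^{a}(Z)=0$ and the Leibniz rule, and the converse extracted by evaluating on the coframe; indeed your test against $\phi =\omega ^{b}$ is literally the step the paper carries out inside the proof of Theorem \ref{automorphicMeanCurvTheorem}, where it writes $\sum_{a}(\nabla _{\bar{V}_{a}}H^{1,0}\lrcorner \,\omega ^{b})\bar{\omega}^{a}=0$ and concludes $\nabla _{\bar{V}_{a}}H^{1,0}=0$.

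One caveat, which you should be aware of even though the paper's own Lemma \ref{delKappa=0Lemma} glosses the same point: your claim that $\nabla $ preserves $Q^{1,0}\oplus Q^{0,1}$ as ``a standard consequence of the transverse Hermitian hypothesis'' is not accurate for the connection actually in play. The paper's $\nabla $ is the transverse Levi-Civita connection, and $\nabla J=0$ is equivalent to the foliation being transversely K\"{a}hler; for a strictly Hermitian transverse metric, $\nabla _{\bar{V}_{a}}Z$ may acquire a $(0,1)$-component, a frame of $Q^{1,0}$ with all covariant derivatives vanishing at a point need not exist, and $\bar{\partial}_{B}$ is no longer $\sum_{a}\epsilon (\bar{\omega}^{a})\nabla _{\bar{V}_{a}}$, so the frame computation as written only proves the K\"{a}hler case (which is all the paper subsequently uses). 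The statement does hold in the Hermitian generality, and the repair is cheap: since $\bar{\partial}_{B}$ and $Z\lrcorner $ are antiderivations, their anticommutator is a derivation, and it annihilates basic functions because $(\bar{\partial}_{B}f)(Z)=0$ for $Z\in \Gamma _{B}Q^{1,0}$; hence it is tensorial and determined by its action on a coframe. Evaluating in local transverse holomorphic coordinates, it kills $d\bar{z}^{j}$ and sends $dz^{j}\mapsto \bar{\partial}_{B}Z^{j}$ where $Z=\sum_{j}Z^{j}\partial _{z_{j}}$, so it vanishes identically if and only if each $Z^{j}$ is holomorphic, i.e.\ $Z$ is transversely holomorphic in the paper's chart-level sense --- with no connection needed at all. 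With that substitution (or with the K\"{a}hler restriction made explicit), your argument is complete.
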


\begin{remark}
It is interesting to determine the relationship between the condition above
and the condition $\bar{\partial}_{B}Z^{\flat }=0$. Note that if $Z\in
\Gamma _{B}Q^{1,0}$, $Z^{\flat }\in \Omega _{B}^{0,1}\left( \mathcal{F}%
\right) $, and if $\bar{\partial}_{B}Z^{\flat }=0$, then making the usual
choices of frame (with covariant derivatives vanishing at the point in
question) we have 
\begin{eqnarray*}
0 &=&\sum_{a,b}\bar{\omega}^{a}\wedge \nabla _{\bar{V}_{a}}\left(
\left\langle Z,V_{b}\right\rangle \bar{\omega}^{b}\right) \\
&=&\sum_{a,b}\left( \bar{V}_{a}\left\langle Z,V_{b}\right\rangle \right) 
\bar{\omega}^{a}\wedge \bar{\omega}^{b} \\
&=&\sum_{a,b}\left\langle \nabla _{\bar{V}_{a}}Z,V_{b}\right\rangle \bar{%
\omega}^{a}\wedge \bar{\omega}^{b},
\end{eqnarray*}%
so that $\bar{\partial}_{B}Z^{\flat }=0$ is equivalent to $\left\langle
\nabla _{\bar{V}_{a}}Z,V_{b}\right\rangle =\left\langle \nabla _{\bar{V}%
_{b}}Z,V_{a}\right\rangle $ for all $a,b$. So it is definitely the case that
if $Z$ is transversely holomorphic, then $\bar{\partial}_{B}Z^{\flat }=0$,
but the converse is false in general. (In the Examples section, we will see
cases where $H^{1,0}$ is not transversely holomorphic but where (as always) $%
\overline{\partial }_{B}\kappa ^{0,1}=\overline{\partial }_{B}\left(
H^{1,0}\right) ^{\flat }=0$.)
\end{remark}

We would now like to apply these results about automorphic vector fields to
the mean curvature.

\begin{theorem}
\label{automorphicMeanCurvTheorem}Let $\left( M,\mathcal{F},J,g_{Q}\right) $
be a transverse K\"{a}hler foliation with compact leaf closures and a
compatible bundle-like metric. Then the mean curvature of $\left( M,\mathcal{%
F}\right) $ is automorphic if and only if%
\begin{equation*}
\Delta _{B}=\square _{B}+\overline{\square }_{B}.
\end{equation*}
\end{theorem}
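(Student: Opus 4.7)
The plan is to exploit Lemma \ref{LaplacianKahlerFormLemma}, which already expresses $\Delta_B - \square_B - \overline{\square}_B$ explicitly in terms of the mean curvature:
\begin{equation*}
\Delta_B - \square_B - \overline{\square}_B = \bigl(\partial_B H^{0,1}\lrcorner + H^{0,1}\lrcorner\,\partial_B\bigr) + \bigl(\bar{\partial}_B H^{1,0}\lrcorner + H^{1,0}\lrcorner\,\bar{\partial}_B\bigr).
\end{equation*}
So the theorem reduces to showing that this right-hand side vanishes if and only if the mean curvature $H$ is transversely automorphic.

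First I would observe that the two grouped terms on the right have opposite bidegrees. Namely, $\partial_B H^{0,1}\lrcorner + H^{0,1}\lrcorner\,\partial_B$ carries $\Omega_B^{r,s}(\mathcal{F})$ into $\Omega_B^{r+1,s-1}(\mathcal{F})$, while $\bar{\partial}_B H^{1,0}\lrcorner + H^{1,0}\lrcorner\,\bar{\partial}_B$ carries $\Omega_B^{r,s}(\mathcal{F})$ into $\Omega_B^{r-1,s+1}(\mathcal{F})$. Because these images lie in independent summands of $\Omega_B^{r+s}(\mathcal{F})$, the vanishing of the total operator on all bidegrees forces the two groups to vanish separately. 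Furthermore, the two groups are complex conjugates of one another (applied to real basic forms), so one group vanishes if and only if the other does.

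Next I would invoke Proposition \ref{transvHolomIFFProp} applied to $Z = H^{1,0} \in \Gamma_B Q^{1,0}$: the identity $\bar{\partial}_B H^{1,0}\lrcorner + H^{1,0}\lrcorner\,\bar{\partial}_B = 0$ is equivalent to $H^{1,0}$ being transversely holomorphic. Then Lemma \ref{automorphHolomorphKahlerLemma}, combined with the formula $2H^{1,0} = H - iJH$ from (\ref{HKappaFormulas}), identifies transverse holomorphicity of $H^{1,0}$ with transverse automorphicity of $H$. Chaining these equivalences yields the theorem.

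I do not anticipate a serious obstacle: the identities needed are either already stated in the paper or follow immediately from bidegree bookkeeping. The only slightly delicate point is the bidegree separation argument, and it must be stated carefully since the two bidegree-shifts $(1,-1)$ and $(-1,1)$ are distinct on every $(r,s)$ component, so equality of the sum to zero genuinely forces each piece to vanish; this is the crux of the equivalence in the forward direction, while the reverse direction is immediate from the displayed formula.
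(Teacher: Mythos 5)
Your proposal is correct and takes essentially the same route as the paper's proof: both rest on Lemma \ref{LaplacianKahlerFormLemma} to reduce the identity $\Delta _{B}=\square _{B}+\overline{\square }_{B}$ to the vanishing of the two mean-curvature terms, on Proposition \ref{transvHolomIFFProp} to identify $\bar{\partial}_{B}H^{1,0}\lrcorner +H^{1,0}\lrcorner \,\bar{\partial}_{B}=0$ with transverse holomorphicity of $H^{1,0}$, and on Lemma \ref{automorphHolomorphKahlerLemma} to translate that into automorphicity of $H$. The only difference is cosmetic: where you separate the two terms abstractly by their bidegree shifts $\left( 1,-1\right) $ and $\left( -1,1\right) $, the paper achieves the same separation in the converse direction by applying the vanishing operator to the coframe forms $\omega ^{b}$, on which the $\left( 1,-1\right) $-shift term vanishes automatically.
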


\begin{proof}
Suppose the mean curvature is automorphic. Then by Lemma \ref%
{automorphHolomorphKahlerLemma} and Proposition \ref{transvHolomIFFProp},%
\begin{equation*}
\bar{\partial}_{B}H^{1,0}\lrcorner +H^{1,0}\lrcorner \,\bar{\partial}_{B}=0,
\end{equation*}%
and also by conjugating, $\partial _{B}H^{0,1}\lrcorner +H^{0,1}\lrcorner
\,\partial _{B}=0$. By the formula in Lemma \ref{LaplacianKahlerFormLemma}, $%
\Delta _{B}=\square _{B}+\overline{\square }_{B}$.\newline
Conversely, suppose that $\bar{\partial}_{B}H^{1,0}\lrcorner
+H^{1,0}\lrcorner \,\bar{\partial}_{B}+\partial _{B}H^{0,1}\lrcorner
+H^{0,1}\lrcorner \,\partial _{B}=0$. Applying this operator to $\omega ^{b}$%
, we obtain as in the proof of Proposition \ref{transvHolomIFFProp} that 
\begin{equation*}
\sum_{a}(\nabla _{\bar{V}_{a}}H^{1,0}\lrcorner \,\omega ^{b})\bar{\omega}%
^{a}+0=0,
\end{equation*}%
so that $\nabla _{\bar{V}_{a}}H^{1,0}=0$ for all $a$. Thus, $H^{1,0}$ is
transversely holomorphic, making the mean curvature automorphic.
\end{proof}

Another consequence of Lemma \ref{LaplacianKahlerFormLemma} and Proposition %
\ref{transvHolomIFFProp} is the following.

\begin{corollary}
\label{automorphicIFFpreserveTypeOfFormCor}Let $\left( M,\mathcal{F}%
,J,g_{Q}\right) $ be a transverse K\"{a}hler foliation with compact leaf
closures and a compatible bundle-like metric. Then the mean curvature of $%
\left( M,\mathcal{F}\right) $ is automorphic if and only if $\Delta _{B}$
preserves the $\left( r,s\right) $ type of a form.
\end{corollary}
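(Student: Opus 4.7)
The plan is to deduce this corollary directly from the explicit formula for $\Delta_B$ in Lemma \ref{LaplacianKahlerFormLemma} by tracking how each summand shifts the bidegree $(r,s)$, and then invoking Theorem \ref{automorphicMeanCurvTheorem}, Proposition \ref{transvHolomIFFProp}, and Lemma \ref{automorphHolomorphKahlerLemma} as black boxes.

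For the forward direction, suppose the mean curvature is automorphic. By Theorem \ref{automorphicMeanCurvTheorem}, this is equivalent to $\Delta_B = \square_B + \overline{\square}_B$. Since $\partial_B$ and $\partial_B^{\ast}$ shift bidegree by $(+1,0)$ and $(-1,0)$ respectively, $\square_B$ preserves the $(r,s)$-type, and likewise $\overline{\square}_B$ preserves the $(r,s)$-type. Hence so does $\Delta_B$.

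For the converse, I would analyze the bidegree shift of each of the four extra summands in
\begin{equation*}
\Delta_B - \square_B - \overline{\square}_B = \partial_B H^{0,1}\lrcorner + H^{0,1}\lrcorner\,\partial_B + \bar{\partial}_B H^{1,0}\lrcorner + H^{1,0}\lrcorner\,\bar{\partial}_B.
\end{equation*}
Since $H^{0,1}\lrcorner$ shifts by $(0,-1)$ and $\partial_B$ shifts by $(+1,0)$, the first two terms send $\Omega_B^{r,s}(\mathcal{F})$ into $\Omega_B^{r+1,s-1}(\mathcal{F})$; similarly the last two terms land in $\Omega_B^{r-1,s+1}(\mathcal{F})$. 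If $\Delta_B$ preserves the $(r,s)$-type, then because $\square_B + \overline{\square}_B$ already preserves it, the component of the above sum landing in $\Omega_B^{r-1,s+1}(\mathcal{F})$ must vanish on every $(r,s)$-form, giving
\begin{equation*}
\bar{\partial}_B H^{1,0}\lrcorner + H^{1,0}\lrcorner\,\bar{\partial}_B = 0
\end{equation*}
on all basic forms. By Proposition \ref{transvHolomIFFProp}, this says $H^{1,0}$ is transversely holomorphic, and then Lemma \ref{automorphHolomorphKahlerLemma} (applied to the basic vector field $H = H^{1,0}+H^{0,1}$) concludes that the mean curvature is automorphic.

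No step here is an essential obstacle; the only point requiring mild care is that the four extra terms in Lemma \ref{LaplacianKahlerFormLemma} split cleanly into a $(+1,-1)$-shifting piece and a $(-1,+1)$-shifting piece with no $(0,0)$-shifting contribution, so that vanishing of the type-shift implies vanishing of each piece separately. (Alternatively, one can observe that the two pieces are formal adjoints of one another, so one vanishes iff the other does.)
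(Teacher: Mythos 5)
Your proof is correct and follows essentially the same route as the paper, which derives this corollary from the decomposition of $\Delta_B$ in Lemma \ref{LaplacianKahlerFormLemma} together with Proposition \ref{transvHolomIFFProp} (and Theorem \ref{automorphicMeanCurvTheorem}); your bidegree-tracking argument supplies exactly the details the paper leaves implicit, and your observation that the $(+1,-1)$- and $(-1,+1)$-shifting pieces vanish together (being conjugates, or adjoints by self-adjointness of $\Delta_B-\square_B-\overline{\square}_B$) is sound.
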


\begin{corollary}
\label{cohomIneqAutomorphicCase}Let $(M,\mathcal{F},J,g_{Q})$ be a
transverse K\"{a}hler foliation of codimension $2n$ on a compact manifold
such that the mean curvature is automorphic. Then for $0\leq j\leq 2n$,%
\begin{equation*}
\dim \left( \mathcal{H}_{B}^{j}\left( \mathcal{F}\right) \right) \leq
\sum_{r+s=j;~r,s\geq 0}\dim H_{B}^{r,s}\left( \mathcal{F}\right) .
\end{equation*}
\end{corollary}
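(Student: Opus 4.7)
The plan is to combine Corollary \ref{automorphicIFFpreserveTypeOfFormCor} (which says that automorphic mean curvature is equivalent to $\Delta_B$ preserving $(r,s)$-type) with Corollary \ref{weakInequalityCorollary} (the pointwise type-restricted inequality $\dim(\mathcal{H}_B^j(\mathcal{F}) \cap \Omega_B^{r,s}(\mathcal{F})) \leq \dim H_B^{r,s}(\mathcal{F})$, which does not require the automorphic hypothesis).

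First I would observe that since $\Delta_B$ is type-preserving under the automorphic assumption, any $\Delta_B$-harmonic basic $j$-form $\phi \in \mathcal{H}_B^j(\mathcal{F})$ splits into its bidegree components $\phi = \sum_{r+s=j} \phi^{r,s}$, with each $\phi^{r,s} \in \Omega_B^{r,s}(\mathcal{F})$ individually $\Delta_B$-harmonic (because $\Delta_B \phi^{r,s}$ lies in $\Omega_B^{r,s}(\mathcal{F})$ and the sum $\sum_{r+s=j} \Delta_B \phi^{r,s} = 0$ forces each term to vanish). This gives the direct sum decomposition
\begin{equation*}
\mathcal{H}_B^j(\mathcal{F}) = \bigoplus_{r+s=j,\ r,s\geq 0} \bigl(\mathcal{H}_B^j(\mathcal{F}) \cap \Omega_B^{r,s}(\mathcal{F})\bigr),
\end{equation*}
so that $\dim \mathcal{H}_B^j(\mathcal{F}) = \sum_{r+s=j} \dim\bigl(\mathcal{H}_B^j(\mathcal{F}) \cap \Omega_B^{r,s}(\mathcal{F})\bigr)$.

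Then I would apply Corollary \ref{weakInequalityCorollary} term by term to obtain
\begin{equation*}
\dim \mathcal{H}_B^j(\mathcal{F}) = \sum_{r+s=j} \dim\bigl(\mathcal{H}_B^j(\mathcal{F}) \cap \Omega_B^{r,s}(\mathcal{F})\bigr) \leq \sum_{r+s=j,\ r,s\geq 0} \dim H_B^{r,s}(\mathcal{F}),
\end{equation*}
which is the desired estimate. There is no real obstacle here: both ingredients have already been established, and the only thing to justify carefully is that type-preservation of $\Delta_B$ really does pass to the kernel, which is immediate from the uniqueness of the $(r,s)$-decomposition of basic forms. (Equivalently, one can cite Theorem \ref{automorphicMeanCurvTheorem} directly to write $\Delta_B = \square_B + \overline{\square}_B$, note that both summands preserve type and are nonnegative, and deduce $\Delta_B \phi = 0 \iff \square_B \phi = \overline{\square}_B \phi = 0$ on each bidegree piece, from which the harmonic decomposition by type follows as above.)
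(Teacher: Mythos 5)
Your proposal is correct, and it reaches the conclusion by a slightly different, more modular route than the paper. The paper's proof does not first decompose $\mathcal{H}_{B}^{j}\left( \mathcal{F}\right)$ by bidegree; instead it invokes Theorem \ref{automorphicMeanCurvTheorem} to write $\Delta _{B}=\square _{B}+\overline{\square }_{B}$, notes that consequently $\ll \Delta _{B}\phi ,\phi \gg \,=\, \ll \square _{B}\phi ,\phi \gg +\ll \overline{\square }_{B}\phi ,\phi \gg$ holds for an \emph{arbitrary} basic $j$-form $\phi$ (not just one of pure type, as in Corollary \ref{weakInequalityCorollary}), deduces from nonnegativity that every $\Delta _{B}$-harmonic $\phi$ satisfies $\overline{\square }_{B}\phi =0$, so that each $(r,s)$-component of $\phi$ is $\overline{\square }_{B}$-harmonic, and finishes with the Dolbeault Hodge theorem (Theorem \ref{DolbeaultHodge}) --- in effect re-running the quadratic-form argument of Corollary \ref{weakInequalityCorollary} on all of $\Omega _{B}^{j}$. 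You instead use the type-preservation characterization (Corollary \ref{automorphicIFFpreserveTypeOfFormCor}) to get the direct-sum decomposition $\mathcal{H}_{B}^{j}\left( \mathcal{F}\right) =\bigoplus_{r+s=j}\bigl( \mathcal{H}_{B}^{j}\left( \mathcal{F}\right) \cap \Omega _{B}^{r,s}\left( \mathcal{F}\right) \bigr)$ --- your justification via uniqueness of the bidegree decomposition is sound --- and then cite Corollary \ref{weakInequalityCorollary} componentwise as a black box. What your organization buys is that the inner-product computation is done exactly once (inside \ref{weakInequalityCorollary}) and never repeated, and you obtain the stronger intermediate identity $\dim \mathcal{H}_{B}^{j}=\sum_{r+s=j}\dim \bigl( \mathcal{H}_{B}^{j}\cap \Omega _{B}^{r,s}\bigr)$, which anticipates statement (2) of the Hodge Diamond Theorem \ref{HodgeDiamondTheorem}; what the paper's route buys is a shorter chain of dependencies, resting directly on the operator identity rather than on the type-preservation corollary (which is itself derived from that identity, as your parenthetical alternative --- which is essentially the paper's argument verbatim --- correctly observes). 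Both arguments are valid; the only point worth making explicit in either is that the bidegree splitting takes place on complexified basic forms, which is harmless since $\Delta _{B}$ is a real operator.
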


\begin{proof}
From the theorem and corollary above, under these conditions, for any
differential $j$-form $\phi $, 
\begin{equation*}
\ll \Delta _{B}\phi ,\phi \gg =\ll \square _{B}\phi ,\phi \gg +\ll \overline{%
\square }_{B}\phi ,\phi \gg .
\end{equation*}%
Since $\Delta _{B}$, $\square _{B}$, $\overline{\square }_{B}$ are
nonnegative operators, if $\phi \in \mathcal{H}_{B}^{j}\left( \mathcal{F}%
\right) $, then $\overline{\square }_{B}\phi =0$ as well, so each $\left(
r,s\right) $ component of $\phi $ is $\overline{\square }_{B}$-harmonic. The
result follows from the Hodge theorem.
\end{proof}

\section{The $dd_{c}$ Lemma\label{ddcSection}}

We would like to use the power of the $dd_{c}$ Lemma from K\"{a}hler
geometry to use in our setting. In the foliation setting, we will need some
assumptions. Let $\left( M,\mathcal{F},J,g_{Q}\right) $ be a transverse K%
\"{a}hler foliation of codimension $2n$ on a compact manifold. First we
extend the almost complex structure $J$ by pullback to the operator%
\begin{equation*}
C:\Omega _{B}^{\ast }\left( \mathcal{F}\right) \rightarrow \Omega _{B}^{\ast
}\left( \mathcal{F}\right) .
\end{equation*}%
Note that 
\begin{equation*}
C=\sum_{0\leq a,b\leq n}i^{a-b}P_{a,b},
\end{equation*}%
where $P_{a,b}:\Omega _{B}^{\ast }\left( \mathcal{F}\right) \rightarrow
\Omega _{B}^{a,b}\left( \mathcal{F}\right) $ is the projection. Then $%
C^{\ast }=C^{-1}=\sum_{0\leq a,b\leq n}i^{b-a}P_{a,b}$.

For $0\leq k\leq 2n$, let $d_{c}:\Omega _{B}^{k}\left( \mathcal{F}\right)
\rightarrow \Omega _{B}^{k+1}\left( \mathcal{F}\right) $ be defined by 
\begin{equation*}
d_{c}=i\left( \bar{\partial}_{B}-\partial _{B}\right) =C^{\ast }dC=C^{-1}dC.
\end{equation*}%
Note that $d_{c}$ is a real operator, and its adjoint with respect to basic
forms is 
\begin{equation*}
d_{c}^{\ast }=C^{\ast }\delta _{B}C=C^{-1}\delta _{B}C.
\end{equation*}%
Note that 
\begin{equation*}
\left. dd_{c}\right\vert _{\Omega _{B}^{\ast }\left( \mathcal{F}\right)
}=2i\partial _{B}\bar{\partial}_{B}=-2i\bar{\partial}_{B}\partial
_{B}=\left. -d_{c}d\right\vert _{\Omega _{B}^{\ast }\left( \mathcal{F}%
\right) }.
\end{equation*}%
Let%
\begin{equation*}
\Delta _{d_{c}}=d_{c}d_{c}^{\ast }+d_{c}^{\ast }d_{c}=C^{-1}\Delta _{B}C.
\end{equation*}

\begin{lemma}
Let $\left( M,\mathcal{F},J,g_{Q}\right) $ be a transverse K\"{a}hler
foliation with compact leaf closures and a compatible bundle-like metric,
such that the mean curvature of $\left( M,\mathcal{F}\right) $ is
automorphic. Then%
\begin{equation*}
\Delta _{B}=C^{-1}\Delta _{B}C=\Delta _{d_{c}}.
\end{equation*}
\end{lemma}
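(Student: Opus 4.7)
The plan is to exploit Corollary \ref{automorphicIFFpreserveTypeOfFormCor}, which states that automorphic mean curvature is equivalent to $\Delta_{B}$ preserving the $(r,s)$-type of basic forms. Combined with the explicit formula $C = \sum_{0 \le a,b \le n} i^{a-b} P_{a,b}$, which shows that $C$ acts by the scalar $i^{r-s}$ on $\Omega_B^{r,s}(\mathcal{F})$, this should immediately force $[C,\Delta_B] = 0$.

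First I would observe that $\Delta_B$ preserving types means that on any $\phi \in \Omega_B^{r,s}(\mathcal{F})$, $\Delta_B \phi$ is again of type $(r,s)$, so both $C \Delta_B \phi$ and $\Delta_B C \phi$ equal $i^{r-s} \Delta_B \phi$. Extending by linearity via the decomposition $\Omega_B^{\ast}(\mathcal{F}) = \bigoplus_{r,s} \Omega_B^{r,s}(\mathcal{F})$, this yields $C\Delta_B = \Delta_B C$. Rearranging gives $C^{-1} \Delta_B C = \Delta_B$, and the definition $\Delta_{d_c} = C^{-1}\Delta_B C$ recorded just before the lemma statement finishes the identification $\Delta_B = \Delta_{d_c}$.

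There is essentially no obstacle here beyond unpacking the two ingredients; the content of the lemma is really contained in Corollary \ref{automorphicIFFpreserveTypeOfFormCor}. The only thing that needs a brief justification is that $C$ is diagonalized by the type decomposition with the asserted eigenvalues, which is immediate from its formula, and that $\Delta_B$ is $\mathbb{C}$-linear so that it makes sense to speak of its action on each summand separately. Thus the proof should comfortably fit in a few lines.
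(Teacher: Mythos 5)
Your proof is correct and is essentially identical to the paper's own argument: the paper likewise invokes Corollary \ref{automorphicIFFpreserveTypeOfFormCor} to conclude that $\Delta_{B}$ preserves the $(r,s)$-type, notes that $C$ acts as the scalar $i^{r-s}$ on $\Omega_{B}^{r,s}(\mathcal{F})$, and concludes $C^{-1}\Delta_{B}C=\Delta_{B}$, with $\Delta_{d_{c}}=C^{-1}\Delta_{B}C$ by the definition recorded just before the lemma. No gaps; your extra remarks about linearity and the type decomposition are harmless elaborations of the same one-line idea.
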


\begin{proof}
By Corollary \ref{automorphicIFFpreserveTypeOfFormCor}, $\Delta _{B}$
preserves the type of differential form, so that $C$ is just multiplication
by a scalar on forms of type $\left( r,s\right) $. The result follows.
\end{proof}

\begin{lemma}
Let $\left( M,\mathcal{F},J,g_{Q}\right) $ be a transverse K\"{a}hler
foliation with compact leaf closures and a compatible bundle-like metric,
such that the mean curvature of $\left( M,\mathcal{F}\right) $ is
automorphic. Then $\partial _{B}\kappa ^{0,1}=0$ if and only if%
\begin{equation*}
\delta _{B}d_{c}+d_{c}\delta _{B}=\nabla _{JH},
\end{equation*}%
and this is true if and only if $M$ is taut and 
\begin{equation*}
\delta _{B}d_{c}+d_{c}\delta _{B}=0.
\end{equation*}
\end{lemma}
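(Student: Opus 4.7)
The plan is to prove all three equivalences by showing that, under the standing hypotheses (transverse K\"ahler with automorphic mean curvature), each of (A) $\partial_B\kappa_B^{0,1}=0$, (B) $\delta_B d_c+d_c\delta_B=\nabla_{JH}$, and (C) ``$M$ taut and $\delta_B d_c+d_c\delta_B=0$'' forces the sharp conclusion $\kappa_B=0$ for the given metric; the converse implication $\kappa_B=0\Rightarrow$ (A), (B), (C) is then trivial. The engine is the master identity
\[
\delta_B d_c + d_c \delta_B \;=\; i\bigl(\overline{\square}_B - \square_B\bigr)
\]
on basic forms. To prove it I would expand $d_c=i(\bar\partial_B-\partial_B)$ and $\delta_B=\partial_B^*+\bar\partial_B^*$ and check that the two cross anticommutators $\{\partial_B^*,\bar\partial_B\}$ and $\{\bar\partial_B^*,\partial_B\}$ drop out. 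Writing $\partial_B^*=\partial_T^*+H^{1,0}\lrcorner$, the K\"ahler identity on the local quotient gives $\{\partial_T^*,\bar\partial_B\}=0$, and Theorem~\ref{automorphicMeanCurvTheorem} combined with Proposition~\ref{transvHolomIFFProp} gives $\{H^{1,0}\lrcorner,\bar\partial_B\}=0$; conjugation handles the remaining cross term.

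Testing the master identity on a basic function $f$ is decisive for (B) and (C). Using Proposition~\ref{FormulasForBoxB} (noting that $H^{1,0}\lrcorner f=0$), the transverse K\"ahler equality $\Delta_\partial^Q=\Delta_{\bar\partial}^Q$, and the identity $H^{0,1}-H^{1,0}=iJH$, one computes
\[
(\delta_B d_c + d_c \delta_B) f \;=\; i\bigl(\overline{\square}_B - \square_B\bigr) f \;=\; i\bigl(H^{0,1}-H^{1,0}\bigr)(f) \;=\; -\nabla_{JH} f,
\]
so both (B) and (C), restricted to basic functions, force $\nabla_{JH} f=0$ for every basic $f$. Since $JH\in\Gamma(Q)$, this pointwise forces $JH=0$, hence $H=0$ and $\kappa_B=H^\flat=0$.

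For (A)$\Rightarrow\kappa_B=0$ I would argue by parallelism. Lemma~\ref{delKappa=0Lemma} applied to $V=H^{1,0}$ (so $V^\flat=\kappa_B^{0,1}$) gives $\nabla_ZH^{1,0}=0$ for $Z\in\Gamma_BQ^{1,0}$, and the automorphic hypothesis, via Lemma~\ref{automorphHolomorphKahlerLemma} and Proposition~\ref{transvHolomIFFProp}, supplies the complementary $\nabla_{\bar Z}H^{1,0}=0$ for $\bar Z\in\Gamma_BQ^{0,1}$. Hence $H^{1,0}$, and by conjugation $H^{0,1}$, $H$, and $\kappa_B$, are all transversally parallel. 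Then $\|H\|$ is constant, so $\mathcal L_H\kappa_B=d\|H\|^2=0$, and $\Delta_B\kappa_B=\Delta_T\kappa_B+\mathcal L_H\kappa_B=0$: $\kappa_B$ is basic harmonic. Corollary~\ref{classTrivialImpliesAlvClassTrivialCor} turns (A) into tautness of $\mathcal F$, so $\kappa_B$ is also basic exact, and the basic Hodge theorem forces $\kappa_B=0$. Once $\kappa_B=0$, the master identity collapses to $i(\Delta_{\bar\partial}^Q-\Delta_\partial^Q)=0$, yielding (B) and (C) trivially (with $\nabla_{JH}=0$), and (A) is automatic. The main technical obstacle is the parallelism-plus-tautness-plus-Hodge promotion of (A) to $\kappa_B=0$, together with the sign bookkeeping around $H^{0,1}-H^{1,0}=iJH$ in the complex-linear extension of $\nabla$ to $Q^{C}$; the test-on-functions step is then what collapses the remaining equivalences.
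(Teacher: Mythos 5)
Your proposal is correct for the statement as posed, but it is organized quite differently from the paper's argument. The paper proves the lemma by one direct expansion: it writes $\delta _{B}=\partial _{T}^{\ast }+\bar{\partial}_{T}^{\ast }+H^{1,0}\lrcorner +H^{0,1}\lrcorner $, kills the cross terms $\partial _{T}^{\ast }\bar{\partial}_{B}+\bar{\partial}_{B}\partial _{T}^{\ast }$ and $\bar{\partial}_{T}^{\ast }\partial _{B}+\partial _{B}\bar{\partial}_{T}^{\ast }$ using Corollary \ref{LCor} together with $\partial _{B}^{2}=\bar{\partial}_{B}^{2}=0$, uses the K\"{a}hler hypothesis to cancel $\overline{\square }_{T}-\square _{T}$ and the automorphic hypothesis to cancel the anticommutators $\{\bar{\partial}_{B},H^{1,0}\lrcorner \}$ and $\{\partial _{B},H^{0,1}\lrcorner \}$, and then, assuming $\partial _{B}\kappa ^{0,1}=0$, converts the residue into $\nabla _{JH}$ via Lemma \ref{delKappa=0Lemma}; the tautness clause is dispatched by citing Corollary \ref{classTrivialImpliesAlvClassTrivialCor}, with the converse implications left implicit. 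You perform the same cancellations but package them as the identity $\delta _{B}d_{c}+d_{c}\delta _{B}=i\left( \overline{\square }_{B}-\square _{B}\right) $, and then do two things the paper does not: you test the identity on basic functions (via Proposition \ref{FormulasForBoxB} and $\Delta _{\partial }^{Q}=\Delta _{\bar{\partial}}^{Q}$) to kill $JH$ pointwise, and you promote condition (A) to $\kappa _{B}=0$ by parallelism ($\nabla _{Z}H^{1,0}=0$ from Lemma \ref{delKappa=0Lemma}, $\nabla _{\bar{Z}}H^{1,0}=0$ from the automorphic hypothesis via Lemma \ref{automorphHolomorphKahlerLemma} and Proposition \ref{transvHolomIFFProp}), harmonicity, tautness, and the basic Hodge theorem. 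This collapse of all three conditions to $\kappa _{B}=0$ is exactly what the paper's subsequent remark asserts but never proves; your route supplies the missing converse directions explicitly, at the cost of a longer argument. Each step checks out against the paper's machinery.

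One caveat on signs, which you rightly flagged: with the paper's conventions ($H^{0,1}-H^{1,0}=iJH$, so $i\nabla _{H^{0,1}-H^{1,0}}=-\nabla _{JH}$, and $i(\bar{A}-A)=2\func{Im}A$ with $2\func{Im}\nabla _{H^{1,0}}=-\nabla _{JH}$), your computed value $-\nabla _{JH}$ on functions is the consistent one, and the paper's displayed $+\nabla _{JH}$ carries a sign slip. Your step ``(B) forces $\nabla _{JH}f=-\nabla _{JH}f$'' therefore exploits the discrepancy between the stated $+$ sign and the computed $-$ sign; if the lemma were corrected to read $\delta _{B}d_{c}+d_{c}\delta _{B}=-\nabla _{JH}$, that one step degenerates on functions and would have to be replaced, e.g., by testing the operator identity on the coframe forms $\bar{\omega}^{b}$ as in the proof of Theorem \ref{automorphicMeanCurvTheorem} to recover (A), after which your parallelism argument finishes. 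Since under the standing hypotheses every condition is ultimately equivalent to $\kappa _{B}=0$ (where $\nabla _{JH}=0$ under either sign), the lemma is true either way, and your proof is valid for the statement as literally given.
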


\begin{proof}
We have%
\begin{eqnarray*}
\delta _{B}d_{c}+d_{c}\delta _{B} &=&i\left( \partial _{T}^{\ast }+\bar{%
\partial}_{T}^{\ast }+H^{1,0}\lrcorner +H^{0,1}\lrcorner \right) \left( \bar{%
\partial}_{B}-\partial _{B}\right) +i\left( \bar{\partial}_{B}-\partial
_{B}\right) \left( \partial _{T}^{\ast }+\bar{\partial}_{T}^{\ast
}+H^{1,0}\lrcorner +H^{0,1}\lrcorner \right) \\
&=&i\left( \partial _{T}^{\ast }+\bar{\partial}_{T}^{\ast }\right) \left( 
\bar{\partial}_{B}-\partial _{B}\right) +i\left( \bar{\partial}_{B}-\partial
_{B}\right) \left( \partial _{T}^{\ast }+\bar{\partial}_{T}^{\ast }\right) \\
&&+i\left( H^{1,0}\lrcorner +H^{0,1}\lrcorner \right) \left( \bar{\partial}%
_{B}-\partial _{B}\right) +i\left( \bar{\partial}_{B}-\partial _{B}\right)
\left( H^{1,0}\lrcorner +H^{0,1}\lrcorner \right) \\
&=&i\left( \overline{\square }_{T}-\square _{T}+\partial _{T}^{\ast }\bar{%
\partial}_{B}+\bar{\partial}_{B}\partial _{T}^{\ast }-\bar{\partial}%
_{T}^{\ast }\partial _{B}-\partial _{B}\bar{\partial}_{T}^{\ast }\right) \\
&&+i\left( H^{1,0}\lrcorner \bar{\partial}_{B}+\bar{\partial}%
_{B}H^{1,0}\lrcorner +H^{0,1}\lrcorner \bar{\partial}_{B}+\bar{\partial}%
_{B}H^{0,1}\lrcorner -H^{1,0}\lrcorner \partial _{B}-\partial
_{B}H^{1,0}\lrcorner -H^{0,1}\lrcorner \partial _{B}-\partial
_{B}H^{0,1}\lrcorner \right) .
\end{eqnarray*}%
By Corollary \ref{LCor}, $-i\bar{\partial}_{T}^{\ast }=\left[ \Lambda
,\partial _{B}\right] $, so $-i\left( \bar{\partial}_{T}^{\ast }\partial
_{B}+\partial _{B}\bar{\partial}_{T}^{\ast }\right) =\partial _{B}\Lambda
\partial _{B}-\partial _{B}\Lambda \partial _{B}=0$, and similarly $\partial
_{T}^{\ast }\bar{\partial}_{B}+\bar{\partial}_{B}\partial _{T}^{\ast }=0$.
Since the foliation is transversely K\"{a}hler, $\overline{\square }%
_{T}-\square _{T}=0$. By Lemma \ref{automorphHolomorphKahlerLemma} and
Proposition \ref{transvHolomIFFProp},%
\begin{eqnarray*}
\bar{\partial}_{B}H^{1,0}\lrcorner +H^{1,0}\lrcorner \,\bar{\partial}_{B}
&=&0, \\
\partial _{B}H^{0,1}\lrcorner +H^{0,1}\lrcorner \,\partial _{B} &=&0,
\end{eqnarray*}%
so that 
\begin{eqnarray*}
\delta _{B}d_{c}+d_{c}\delta _{B} &=&H^{0,1}\lrcorner \bar{\partial}_{B}+%
\bar{\partial}_{B}H^{0,1}\lrcorner -H^{1,0}\lrcorner \partial _{B}-\partial
_{B}H^{1,0}\lrcorner \\
&=&2i\func{Im}\left( H^{1,0}\lrcorner \partial _{B}+\partial
_{B}H^{1,0}\lrcorner \right) =\nabla _{JH},
\end{eqnarray*}%
by Lemma \ref{delKappa=0Lemma} and Proposition \ref%
{holomorphicKappaClosedProp}, since $\partial _{B}\left( H^{1,0}\right)
^{\flat }=\partial _{B}\kappa ^{0,1}=0$. The rest follows by Corollary \ref%
{classTrivialImpliesAlvClassTrivialCor}.
\end{proof}

Because of this Lemma, we do not expect the $dd_{c}$ Lemma from K\"{a}hler
geometry to hold in our setting, except in the special case when the mean
curvature is zero, since $\delta _{B}d_{c}+d_{c}\delta _{B}=0$ is needed
strongly. For this case, we prove the $dd_{c}$ Lemma in the usual way.

\begin{lemma}
($dd_{c}$\textbf{\ Lemma}) \label{ddcLemma}Let $\left( M,\mathcal{F}%
,J,g_{Q}\right) $ be a taut, transverse K\"{a}hler foliation on a compact
manifold, with a compatible bundle-like metric. Suppose that $\alpha \in
\Omega _{B}^{k}\left( \mathcal{F}\right) $ is $d_{c}$-exact and $d$-closed.
Then there exists a form $\beta \in \Omega _{B}^{k-2}\left( \mathcal{F}%
\right) $ with $\alpha =dd_{c}\beta $.

\begin{proof}
If $\alpha =d_{c}\gamma $, we write $\gamma =d\tau +\eta +\delta _{B}\xi $
by the Hodge decomposition, with $\eta $ basic harmonic. By hypothesis, $%
\eta $ is $\partial _{B}$ and $\overline{\partial }_{B}$-closed and thus $%
d_{c}$-closed as well, so 
\begin{eqnarray*}
d_{c}\gamma &=&d_{c}\left( d\tau +\eta +\delta _{B}\xi \right) \\
&=&d_{c}d\tau +d_{c}\delta _{B}\xi \\
&=&dd_{c}\left( -\tau \right) +d_{c}\delta _{B}\xi .
\end{eqnarray*}%
From the given and formulas and Lemma above, 
\begin{eqnarray*}
0 &=&dd_{c}\gamma =dd_{c}\delta _{B}\xi \\
&=&-d\delta _{B}d_{c}\xi ,
\end{eqnarray*}%
so 
\begin{equation*}
0=\left\langle d\delta _{B}d_{c}\xi ,d_{c}\xi \right\rangle =\left\Vert
\delta _{B}d_{c}\xi \right\Vert ^{2}=\left\Vert d_{c}\delta _{B}\xi
\right\Vert ^{2},
\end{equation*}%
so the equation above is 
\begin{equation*}
\alpha =d_{c}\gamma =dd_{c}\left( -\tau \right) .
\end{equation*}
\end{proof}
\end{lemma}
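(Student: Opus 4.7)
The plan is to model the argument on the classical $dd_c$-Lemma for compact Kähler manifolds, exploiting tautness to ensure that the key obstruction $\delta_B d_c + d_c \delta_B$ vanishes. First I would choose a bundle-like metric for which $\kappa = 0$ (possible by tautness), so that $\partial_B^\ast = \partial_T^\ast$, $\bar\partial_B^\ast = \bar\partial_T^\ast$, $\delta_B = \delta_T$, and in particular $\Delta_B = \square_B + \overline{\square}_B$ with full Kähler identities on basic forms. By the preceding Lemma, in this setting $\delta_B d_c + d_c \delta_B = 0$.

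Next, write $\alpha = d_c \gamma$ for some basic form $\gamma \in \Omega_B^{k-1}(\mathcal{F})$, and apply the basic Hodge decomposition to obtain
\begin{equation*}
\gamma = d\tau + \eta + \delta_B \xi,
\end{equation*}
with $\eta$ basic harmonic. Since on a Kähler foliation with taut metric a basic harmonic form is simultaneously $\partial_B$- and $\bar\partial_B$-closed (harmonicity with respect to $\Delta_B$ implies harmonicity for $\square_B$ and $\overline{\square}_B$ in the case $\Delta_B = 2\square_B = 2\overline{\square}_B$), the form $\eta$ is $d_c$-closed. Therefore
\begin{equation*}
\alpha = d_c\gamma = d_c d\tau + d_c \delta_B \xi = dd_c(-\tau) + d_c\delta_B\xi,
\end{equation*}
using the identity $dd_c = -d_c d$ on basic forms.

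The goal then reduces to showing that $d_c\delta_B\xi = 0$, which is where I expect the main technical content to sit. The hypothesis $d\alpha = 0$ combined with $dd_c\eta = 0$ and $dd_cd\tau = -d_cd^2\tau = 0$ gives $dd_c\delta_B\xi = 0$. Applying $\delta_B d_c = -d_c\delta_B$ from the preceding Lemma converts this into $d\delta_B d_c\xi = 0$. Pairing against $d_c\xi$ and integrating by parts on the compact manifold yields
\begin{equation*}
0 = \ll d\delta_B d_c\xi, d_c\xi\gg = \ll \delta_B d_c\xi, \delta_B d_c\xi\gg,
\end{equation*}
so $\delta_B d_c\xi = 0$, and hence $d_c\delta_B\xi = 0$. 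Therefore $\alpha = dd_c(-\tau)$, so $\beta := -\tau$ does the job.

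The main obstacle I anticipate is verifying that tautness and the transverse Kähler property together guarantee the anticommutation $\delta_B d_c + d_c\delta_B = 0$ on all basic forms, not merely on specific types; but this is exactly what the preceding Lemma supplies, since in the taut case $H = 0$ kills the correction term $\nabla_{JH}$. Without tautness, the correction term would survive, the identity $dd_c\gamma = 0$ would not force $d_c\delta_B\xi = 0$, and the argument would break down — consistent with the failure of the $dd_c$-Lemma for nontaut transverse Kähler foliations noted earlier in the paper.
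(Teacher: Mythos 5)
Your proof is correct and takes essentially the same route as the paper's: the basic Hodge decomposition $\gamma = d\tau + \eta + \delta_B\xi$, the observation that the harmonic part $\eta$ is $d_c$-closed in the taut K\"ahler setting, the anticommutation $\delta_B d_c + d_c\delta_B = 0$ from the preceding Lemma, and the same integration-by-parts step showing $d_c\delta_B\xi = 0$, yielding $\beta = -\tau$. Your one addition --- fixing at the outset a bundle-like metric with $\kappa = 0$ --- is a clarifying (and legitimate, since the conclusion $\alpha = dd_c\beta$ is metric-independent) way of making explicit what the paper leaves implicit when it invokes the anticommutation identity.
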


From this it follows that the basic cohomology is formal, as in the case of
ordinary cohomology of K\"{a}hler manifolds.

\section{The Hodge diamond\label{HodgeDiamondSection}}

On a transverse K\"{a}hler foliation with compact leaf closures and
compatible bundle-like metric, by Corollary \ref%
{automorphicIFFpreserveTypeOfFormCor} the basic Laplacian $\Delta _{B}$
preserves the $\left( r,s\right) $ type of form if and only if the mean
curvature is automorphic. For the purposes of what follows, we will consider
the case when the basic mean curvature is automorphic, and we consider the $%
\Delta _{B}$-harmonic forms of type $\left( r,s\right) $. Let%
\begin{equation*}
\mathcal{H}_{\Delta _{B}}^{r,s}\left( \mathcal{F}\right) =\left\{ \alpha \in
\Omega _{B}^{r,s}\left( \mathcal{F}\right) :\Delta _{B}\alpha =0\right\} .
\end{equation*}

\begin{theorem}
\label{HodgeDiamondTheorem}(\textbf{Hodge Diamond Theorem}) Let $\left( M,%
\mathcal{F},J,g_{Q}\right) $ be a transverse K\"{a}hler foliation of
codimension $2n$ on a compact manifold. If there exists a compatible
bundle-like metric such that the mean curvature of $\left( M,\mathcal{F}%
\right) $ is automorphic, then the spaces $\mathcal{H}_{\Delta
_{B}}^{r,s}\left( \mathcal{F}\right) $ and basic cohomology groups have the
following structure:

\begin{enumerate}
\item (\textbf{Hodge symmetry}) For all $r,s$ such that $0\leq r\leq s\leq n$%
, $\mathcal{H}_{\Delta _{B}}^{r,s}$ $\left( \mathcal{F}\right) \cong 
\mathcal{H}_{\Delta _{B}}^{s,r}\left( \mathcal{F}\right) $.

\item For all $j$ such that $0\leq j\leq 2n$, $\dim H_{B}^{j}\left( \mathcal{%
F}\right) =\sum\limits_{r+s=j;~r,s\geq 0}\dim \mathcal{H}_{\Delta
_{B}}^{r,s}\left( \mathcal{F}\right) $.

\item $\dim H_{B}^{r}\left( \mathcal{F}\right) $ is even if $r$ is odd, and $%
\dim H_{B}^{1}\left( \mathcal{F}\right) =2\dim \mathcal{H}_{\Delta
_{B}}^{1,0}\left( \mathcal{F}\right) $ is a topological invariant.

\item If in addition the class $\left[ \partial _{B}\kappa _{B}^{0,1}\right]
\in H_{\partial _{B}\bar{\partial}_{b}}^{1,1}\left( \mathcal{F}\right) $ is
trivial, then the \'{A}lvarez class $\left[ \kappa _{B}\right] \in
H_{B}^{1}\left( \mathcal{F}\right) $ is trivial, the spaces $\mathcal{H}%
_{\Delta _{B}}^{r,s}\cong H_{B}^{r,s}\left( \mathcal{F}\right) $ have the
following structure:

\begin{enumerate}
\item The map $L^{k}:H_{B}^{r}\left( \mathcal{F}\right) \rightarrow
H_{B}^{r+2k}\left( \mathcal{F}\right) $ is injective for $0\leq k\leq n-r$
and surjective for $k\geq n-r$, $k\geq 0$.

\item We have $H_{B}^{r}\left( \mathcal{F}\right) =\bigoplus_{k\geq
0}L^{k}H_{B,P}^{r-2k}\left( \mathcal{F}\right) $, $H_{B}^{p,q}\left( 
\mathcal{F}\right) =\bigoplus_{k\geq 0}L^{k}H_{B,P}^{p-k,q-k}\left( \mathcal{%
F}\right) .$

\item (\textbf{Kodaira-Serre duality}) For all $r,s$ such that $0\leq r\leq
s\leq n$, $H_{B}^{r,s}\left( \mathcal{F}\right) \cong H_{B}^{n-r,n-s}\left( 
\mathcal{F}\right) $.

\item All of the $\mathcal{H}_{\Delta _{B}}^{r,s}\left( \mathcal{F}\right) $
in (1),(2),(3) above may be replaced with $H_{B}^{r,s}\left( \mathcal{F}%
\right) $.
\end{enumerate}

\item If the class $\left[ \partial _{B}\kappa _{B}^{0,1}\right] \in
H_{\partial _{B}\bar{\partial}_{b}}^{1,1}\left( \mathcal{F}\right) $ is
nontrivial, then (4a), (4b), (4c) are false.
\end{enumerate}
\end{theorem}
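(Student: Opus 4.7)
The plan is to exploit the two characterizations of the automorphic hypothesis provided by Theorem~\ref{automorphicMeanCurvTheorem} and Corollary~\ref{automorphicIFFpreserveTypeOfFormCor}: namely, $\Delta_{B}=\square_{B}+\overline{\square}_{B}$ and $\Delta_{B}$ preserves the $(r,s)$ bi-type. For Hodge symmetry~(1), since $\Delta_{B}$ is a real differential operator and complex conjugation exchanges types $(r,s)$ and $(s,r)$, the map $\alpha\mapsto\overline{\alpha}$ restricts to an $\mathbb{R}$-antilinear isomorphism $\mathcal{H}_{\Delta_{B}}^{r,s}(\mathcal{F})\cong\mathcal{H}_{\Delta_{B}}^{s,r}(\mathcal{F})$. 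For (2), type-preservation produces the orthogonal decomposition
\begin{equation*}
\mathcal{H}_{B}^{j}(\mathcal{F})=\bigoplus_{r+s=j;~r,s\geq 0}\mathcal{H}_{\Delta_{B}}^{r,s}(\mathcal{F}),
\end{equation*}
and the basic Hodge theorem $H_{B}^{j}(\mathcal{F})\cong\mathcal{H}_{B}^{j}(\mathcal{F})$ converts this into the claimed dimension identity. Part (3) then follows by pairing off $(r,s)\leftrightarrow (s,r)$ (with $r\neq s$ when $j$ is odd) in (2), together with the topological invariance of basic cohomology from \cite{EKN}.

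For part~(4), triviality of $[\partial_{B}\kappa_{B}^{0,1}]$ combined with Corollary~\ref{classTrivialImpliesAlvClassTrivialCor} forces the foliation to be taut, so I can select a compatible bundle-like metric with $\kappa=0$. Then $H^{1,0}=H^{0,1}=0$, and Proposition~\ref{FormulasForBoxB} reduces $\square_{B}$ and $\overline{\square}_{B}$ to the transverse K\"ahler Laplacians, giving $\Delta_{B}=2\square_{B}=2\overline{\square}_{B}$ via the pointwise K\"ahler identity on the local quotients. Hence $\mathcal{H}_{\Delta_{B}}^{r,s}\cong H_{B}^{r,s}(\mathcal{F})$ by Theorem~\ref{DolbeaultHodge}, which establishes (4d) and lifts (1)--(2) so that they hold for $H_{B}^{r,s}$ itself. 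Items (4a) and (4b) are then exactly Theorem~\ref{HardLefschetzTheorem}. For Kodaira-Serre duality (4c), the transverse Hodge star $\bar{\ast}$ intertwines $\square_{B}$ with $\overline{\square}_{B}$ when $\kappa_{B}=0$ and sends $\Lambda_{C}^{r,s}Q^{\ast}$ to $\Lambda_{C}^{n-s,n-r}Q^{\ast}$, yielding $H_{B}^{r,s}\cong H_{B}^{n-s,n-r}$; composing with Hodge symmetry (1) then produces $H_{B}^{r,s}(\mathcal{F})\cong H_{B}^{n-r,n-s}(\mathcal{F})$.

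For part~(5), suppose $[\partial_{B}\kappa_{B}^{0,1}]$ is nontrivial; by Corollary~\ref{classTrivialImpliesAlvClassTrivialCor} the foliation fails to be taut, and \cite[Corollary~6.2]{Al} forces $H_{B}^{2n}(\mathcal{F})=0$. Since $H_{B}^{0}(\mathcal{F})\cong\mathbb{C}$, the map $L^{n}:H_{B}^{0}\to H_{B}^{2n}$ cannot be injective, so (4a) fails. Applying (2) in degree $2n$ shows every $\mathcal{H}_{\Delta_{B}}^{r,s}$ with $r+s=2n$ vanishes, so $H_{B}^{n,n}(\mathcal{F})=0\neq H_{B}^{0,0}(\mathcal{F})$, ruling out (4c). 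The failure of (4b) is the subtle point, since the decomposition for $H_{B}^{2n}$ is vacuously consistent with $H_{B}^{2n}=0$; the resolution is to observe that (4b), combined with the $\mathfrak{sl}_{2}(\mathbb{C})$-representation theory on the fibers of $\Lambda^{\ast}Q^{\ast}$ developed in Section~\ref{LefschetzDecompSection}, forces the Hard Lefschetz isomorphism in (4a) via the standard K\"ahler argument, so failure of (4a) propagates to failure of (4b). The main obstacle is this last implication from (4b) to (4a), which must be verified carefully so that the contrapositive runs cleanly against the nontautness hypothesis.
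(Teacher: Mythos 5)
Your parts (1)--(4) are correct and essentially reproduce the paper's own argument: conjugation of the real operator $\Delta _{B}$ for (1), type-preservation (Corollary \ref{automorphicIFFpreserveTypeOfFormCor}) plus the basic Hodge theorem for (2)--(3), and for (4) the reduction via Corollary \ref{classTrivialImpliesAlvClassTrivialCor} to a taut metric with $\kappa =0$, giving $\Delta _{B}=2\square _{B}=2\overline{\square }_{B}$ and hence (4a)--(4d); your explicit $\bar{\ast}$-intertwining argument for Kodaira--Serre duality merely fills in a step the paper leaves implicit. The genuine problems are both in part (5).

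First, your refutation of (4c) conflates two different spaces. From item (2) in degree $2n$ and the nontautness consequence $H_{B}^{2n}\left( \mathcal{F}\right) =0$ of \cite[Corollary 6.2]{Al}, you correctly get $\mathcal{H}_{\Delta _{B}}^{n,n}\left( \mathcal{F}\right) =0$; but you then assert $H_{B}^{n,n}\left( \mathcal{F}\right) =0$, where $H_{B}^{n,n}$ is the basic Dolbeault group computed by $\ker \overline{\square }_{B}$ (Theorem \ref{DolbeaultHodge}). Since $\Delta _{B}=\square _{B}+\overline{\square }_{B}$, one only has $\mathcal{H}_{\Delta _{B}}^{n,n}\subseteq \ker \overline{\square }_{B}$, i.e.\ $\dim \mathcal{H}_{\Delta _{B}}^{n,n}\leq \dim H_{B}^{n,n}$ (this is exactly Corollary \ref{cohomIneqAutomorphicCase}), and the inequality points the wrong way: vanishing of the left side says nothing about $H_{B}^{n,n}$. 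The identification $\mathcal{H}_{\Delta _{B}}^{r,s}\cong H_{B}^{r,s}$ that your step silently uses is precisely statement (4d), which is available only when $\left[ \partial _{B}\kappa _{B}^{0,1}\right] $ is trivial --- the hypothesis excluded in case (5). (Compare the Carri\`{e}re example, where $\mathcal{H}_{B}^{1}$ contains no $(1,0)$-part yet $h_{B}^{0,1}=1$.) A refutation of (4c) needs a separate argument that nontautness kills the Dolbeault group $H_{B}^{n,n}$ itself, which neither your proposal nor the inequality supplies.

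Second, the implication ``(4b) $\Rightarrow $ (4a)'' on which you hang the failure of (4b) is not formal, and you acknowledge you have not verified it. A direct-sum decomposition $H_{B}^{r}=\bigoplus_{k}L^{k}H_{B,P}^{r-2k}$ does not by itself force $L^{k}$ to be injective on the primitive pieces (if $Lx=\sum L^{k+1}x_{k}=0$, directness in degree $r+2$ only gives $L^{k+1}x_{k}=0$, not $x_{k}=0$); in the classical K\"{a}hler proof both conclusions flow from $\left[ L,\Delta \right] =0$, and by Proposition \ref{LaplCommutesWithLProp} that commutation holds here exactly when $\partial _{B}\kappa _{B}^{0,1}=0$ --- again unavailable in case (5). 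The paper avoids this entirely: by Corollary \ref{classTrivialImpliesAlvClassTrivialCor}, nontriviality of the class forces nontautness, so $H_{B}^{2n}\left( \mathcal{F}\right) =0$ while $\left[ 1\right] \in H_{B,P}^{0}\left( \mathcal{F}\right) $ is a nonzero primitive class with $L^{n}\left[ 1\right] =\left[ \omega ^{n}\right] =0$. This single observation falsifies the injectivity in (4a) and, reading (4b) as the assertion that $\left( x_{k}\right) \mapsto \sum_{k}L^{k}x_{k}$ is an isomorphism onto $H_{B}^{r}$, falsifies (4b) directly in degree $2n$ --- note that your weaker reading of (4b) as an equality of images is indeed ``vacuously consistent'' with $H_{B}^{2n}=0$, so the stronger reading is essential. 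You should replace the representation-theoretic detour with this direct degree-$2n$ argument.
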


\begin{proof}
By Theorem \ref{automorphicMeanCurvTheorem}, $\Delta _{B}$ is a real
operator, so that (1) holds by conjugation. By Corollary \ref%
{automorphicIFFpreserveTypeOfFormCor}, $\Delta _{B}$-harmonic forms
correspond exactly to sums of $\Delta _{B}$-harmonic forms of type $\left(
r,s\right) $, so that (2) follows, and (1) and (2) imply (3). By the proof
of the Hard Lefschetz theorem Theorem \ref{HardLefschetzTheorem}, $\left[
L,\Delta _{B}\right] =0$ when $\partial _{B}\kappa _{B}^{0,1}=0$, so we
choose the leafwise metric so that this equation holds, and this change does
not alter the dimensions of the harmonic forms (and certainly not the basic
cohomology groups). Then (4a) and (4b) follow as in the Hard Lefschetz
theorem. Corollary \ref{classTrivialImpliesAlvClassTrivialCor} implies $%
\left[ \kappa _{B}\right] \in H_{B}^{1}\left( \mathcal{F}\right) $ is
trivial, so we modify the bundle-like metric (without changing the
cohomology groups) so that $\kappa _{B}=\kappa =0$, and then Theorem \ref%
{classTrivialTheorem} and Lemma \ref{LaplacianKahlerFormLemma} imply in
addition that $\Delta _{B}=2\overline{\square }_{B}=2\square _{B}$. Then
(4c) and (4d) follow. Statement (5) is a consequence of Corollary \ref%
{classTrivialImpliesAlvClassTrivialCor}.
\end{proof}

\begin{remark}
Items (4a) through (4d) of the theorem above were essentially already known,
because the minimal foliation case was shown in \cite{EK}.
\end{remark}

\begin{remark}
The theorem above gives topological obstructions to the existence of
transverse K\"{a}hler foliations with automorphic mean curvature, and
further obstructions if we require that $\left[ \partial _{B}\kappa
_{B}^{0,1}\right] \in H_{\partial _{B}\bar{\partial}_{b}}^{1,1}\left( 
\mathcal{F}\right) $ is trivial.
\end{remark}

\section{Examples\label{examplesSection}}

\begin{example}
\label{KaehlerExactMeanCurv} Note that in contrast to the situation of a K%
\"{a}hler form on an ordinary manifold, it is possible that $\omega $ is a
trivial class in basic cohomology. This always happens when we consider
nontaut codimension 2 foliations. We consider the Carri\`{e}re example from 
\cite{Ca}. Let $A$ be a matrix in $\mathrm{SL}_{2}(\mathbb{Z})$ of trace
strictly greater than $2$. We denote respectively by $v_{1}$ and $v_{2}$
unit eigenvectors associated with the eigenvalues $\lambda $ and $\frac{1}{%
\lambda }$ of $A$ with $\lambda >1$ irrational. Let the hyperbolic torus $%
\mathbb{T}_{A}^{3}$ be the quotient of $\mathbb{T}^{2}\times \mathbb{R}$ by
the equivalence relation which identifies $(m,t)$ to $(A(m),t+1)$. The flow
generated by the vector field $V_{2}$ is a Riemannian foliation with
bundle-like metric (letting $\left( x,s,t\right) $ denote local coordinates
in the $v_{2}$ direction, $v_{1}$ direction, and $\mathbb{R}$ direction,
respectively) 
\begin{equation*}
g=\lambda ^{-2t}dx^{2}+\lambda ^{2t}ds^{2}+dt^{2}.
\end{equation*}%
Note that the mean curvature of the flow is $\kappa =\kappa _{B}=\log \left(
\lambda \right) dt$, since $\chi _{\mathcal{F}}=\lambda ^{-t}dx$ is the
characteristic form and $d\chi _{\mathcal{F}}=-\log \left( \lambda \right)
\lambda ^{-t}dt\wedge dx=-\kappa \wedge \chi _{\mathcal{F}}$. Then an
orthonormal frame field for this manifold is $\{X=\lambda ^{t}\partial
_{x},S=\lambda ^{-t}\partial _{s},T=\partial _{t}\}$ corresponding to the
orthonormal coframe $\{X^{\ast }=\chi _{\mathcal{F}}=\lambda ^{-t}dx,S^{\ast
}=\lambda ^{t}ds,T^{\ast }=dt\}$. Then, letting $J$ be defined by $%
J(S)=T,J(T)=-S$, the Nijenhuis tensor 
\begin{equation*}
N_{J}(S,T)=[S,T]+J\left( [JS,T]+[S,JT]\right) -[JS,JT]
\end{equation*}%
clearly vanishes, so that $J$ is integrable. (This is also easy to see with
other means.)

The corresponding transverse K\"{a}hler form is seen to be $\omega =T^{\ast
}\wedge S^{\ast }=\lambda ^{t}dt\wedge ds=d(\frac{1}{\log \lambda }S^{\ast
}) $, an exact form in basic cohomology. From the above, 
\begin{eqnarray*}
\kappa &=&\kappa _{B}=\log \left( \lambda \right) dt=\log \left( \lambda
\right) T^{\ast } \\
S^{\ast } &=&\lambda ^{t}ds,~Z^{\ast }=\frac{1}{2}\left( S^{\ast }+iT^{\ast
}\right) =\frac{1}{2}\left( \lambda ^{t}ds+idt\right) ,
\end{eqnarray*}%
so%
\begin{eqnarray*}
\kappa _{B} &=&-i\left( \log \lambda \right) Z^{\ast }+i\left( \log \lambda
\right) \bar{Z}^{\ast } \\
&=&-i\left( \log \lambda \right) \frac{1}{2}\left( \lambda ^{t}ds+idt\right)
+i\left( \log \lambda \right) \bar{Z}^{\ast }.
\end{eqnarray*}%
Then%
\begin{eqnarray*}
\kappa _{B}^{1,0} &=&-i\log \left( \lambda \right) Z^{\ast }=-\frac{i}{2}%
\left( \log \lambda \right) \left( \lambda ^{t}ds+idt\right) \\
\bar{\partial}_{B}\kappa _{B}^{1,0} &=&d\kappa _{B}^{1,0}=\frac{i}{2}\left(
\log \lambda \right) ^{2}\lambda ^{t}ds\wedge dt \\
&=&\frac{i}{2}\left( \log \lambda \right) ^{2}S^{\ast }\wedge T^{\ast } \\
&=&\left( \log \lambda \right) ^{2}\bar{Z}^{\ast }\wedge Z^{\ast }.
\end{eqnarray*}%
It is impossible to change the metric so that this is zero. The reason is
that from \cite{Al} the mean curvature $\kappa _{B}^{\prime }$ for any other
compatible bundle-like metric would satisfy $\kappa _{B}^{\prime }=\kappa
_{B}+df$ for some real basic function $f$, which would imply that $(\kappa
_{B}^{1,0})^{\prime }=\kappa _{B}^{1,0}+\partial _{B}f$, and $\partial
_{B}f=Z(f)Z^{\ast }$. Since $f$ is a periodic function of $t$ alone, this is 
$\partial _{B}f=-i(\partial _{t}f)\,Z^{\ast }$. Then in that case 
\begin{eqnarray*}
\bar{\partial}_{B}(\kappa _{B}^{1,0})^{\prime } &=&d(\kappa
_{B}^{1,0}-i(\partial _{t}f)\,Z^{\ast }) \\
&=&d\left( -\frac{i}{2}\left( \log \lambda +2\partial _{t}f\right) \left(
\lambda ^{t}ds+idt\right) \right) \\
&=&\left( \frac{i}{2}(\log \lambda )^{2}+i\partial _{t}^{2}f+i(\log \lambda
)\partial _{t}f)\right) \lambda ^{t}ds\wedge dt \\
&=&\left( (\log \lambda )^{2}+2\partial _{t}^{2}f+2(\log \lambda )\partial
_{t}f\right) \bar{Z}^{\ast }\wedge Z^{\ast }
\end{eqnarray*}%
Since the term in parentheses is never zero for any periodic function $f$,
we conclude that $\bar{\partial}\kappa _{B}^{1,0}$ is a nonzero multiple of $%
\bar{Z}^{\ast }\wedge Z^{\ast }$ for any compatible bundle-like metric. This
is not surprising, because this being zero would imply $\left( M,\mathcal{F}%
\right) $ is taut by Corollary \ref{classTrivialImpliesAlvClassTrivialCor}.

For later use, we compute that basic Dolbeault cohomology in this example.
One can easily verify that 
\begin{eqnarray*}
H_{B}^{0,0}=\ker \overline{\partial }_{B}^{0,0} &=&\func{span}\{1\} \\
H_{B}^{1,0}=\ker \overline{\partial }_{B}^{1,0} &=&\{0\} \\
H_{B}^{0,1}=\frac{\ker \overline{\partial }_{B}^{0,1}}{\func{im}\overline{%
\partial }_{B}^{0,0}} &=&\func{span}\{S^{\ast }-iT^{\ast }\} \\
H_{B}^{1,1}=\frac{\Omega _{B}^{1,1}}{\func{im}\overline{\partial }_{B}^{1,0}}
&=&\{0\},
\end{eqnarray*}%
where the last equality is true because one can show that every element of $%
\Omega _{B}^{1,1}$ is $\overline{\partial }_{B}$-exact. Now we compute $%
H_{\partial _{B}\overline{\partial }_{B}}^{\ast ,\ast }\left( \mathcal{F}%
\right) $: because $\partial _{B}\overline{\partial }_{B}f=-\frac{1}{4}%
\Delta _{B}f~dz\wedge d\overline{z}$ integrates to zero, we have%
\begin{equation*}
H_{\partial _{B}\overline{\partial }_{B}}^{0,0}\left( \mathcal{F}\right)
\cong \mathbb{C;~}H_{\partial _{B}\overline{\partial }_{B}}^{1,1}\left( 
\mathcal{F}\right) =\func{span}\{\bar{Z}^{\ast }\wedge Z^{\ast }\}\cong 
\mathbb{C},
\end{equation*}%
and as expected, $\left[ \partial _{B}\kappa _{B}^{0,1}\right] $ is nonzero
and thus a generator.

Then observe that the ordinary basic cohomology Betti numbers for this
foliation are $h_{B}^{0}=h_{B}^{1}=1$, $h_{B}^{2}=0$, we see that the basic
Dolbeault Betti numbers satisfy 
\begin{equation*}
h_{B}^{0,0}=h_{B}^{0,1}=1,\quad h_{B}^{1,0}=h_{B}^{1,1}=0.
\end{equation*}%
So even though it is true that 
\begin{equation*}
h_{B}^{j}=\sum_{r+s=j}h_{B}^{r,s},
\end{equation*}%
and the foliation is transversely K\"{a}hler, we also have (with $n=1$) 
\begin{equation*}
h_{B}^{r,s}\neq h_{B}^{s,r},\quad h_{B}^{r,s}\neq h_{B}^{n-r,n-s}.
\end{equation*}%
Theorem \ref{HodgeDiamondTheorem} tells us that the mean curvature is not
automorphic. We can also verify this directly: 
\begin{eqnarray*}
H^{1,0} &=&\frac{i\log \lambda }{2}Z, \\
\left( \overline{\partial }_{B}H^{1,0}\lrcorner +H^{1,0}\lrcorner \overline{%
\partial }_{B}\right) \kappa _{B}^{1,0} &=&\overline{\partial }_{B}\left( 
\frac{\left( \log \lambda \right) ^{2}}{2}\right) +H^{1,0}\lrcorner \left(
\log \lambda \right) ^{2}\bar{Z}^{\ast }\wedge Z^{\ast } \\
&=&-\frac{i}{2}\left( \log \lambda \right) ^{3}\bar{Z}^{\ast }\neq 0.
\end{eqnarray*}%
Another way to see this is to choose local transverse holomorphic
coordinates. The reader may check that if we choose%
\begin{equation*}
x_{0}=-\left( \log \lambda \right) s;~y_{0}=\lambda ^{-t}
\end{equation*}%
and let $z_{0}=x_{0}+iy_{0}$, then 
\begin{eqnarray*}
\partial _{x_{0}} &=&-\frac{1}{\log \lambda }\partial _{s},~\partial
_{y_{0}}=-\frac{\lambda ^{t}}{\log \lambda }\partial _{t};~dx_{0}=-\left(
\log \lambda \right) ds,~dy_{0}=-\left( \log \lambda \right) \lambda ^{-t}dt;
\\
\text{ }J\left( \partial _{x_{0}}\right) &=&\partial _{y_{0}},~J\left(
\partial _{y_{0}}\right) =-\partial _{x_{0}};~dz_{0}=-\left( \log \lambda
\right) \left( ds+i\lambda ^{-t}dt\right) ,
\end{eqnarray*}%
and so 
\begin{equation*}
\kappa _{B}^{1,0}=\frac{i}{2y_{0}}dz_{0},
\end{equation*}%
which is clearly not a holomorphic one-form.

The exactness of the basic K\"{a}hler form causes the Kodaira-Serre
argument, the Lefschetz theorem, the Hodge diamond ideas to fail. Thus, for
a nontaut, transverse K\"{a}hler foliation, it is not necessarily true that
the odd basic Betti numbers are even, and the basic Dolbeault numbers do not
have the same kinds of symmetries as Dolbeault cohomology on K\"{a}hler
manifolds. Also, this example shows that the even degree basic cohomology
groups are not always nonzero, as is the case for ordinary cohomology for
symplectic manifolds (and thus all K\"{a}hler manifolds).
\end{example}

\begin{example}
\label{ExampleNewClassZeroButNotTaut}We now consider the product foliation
on the product manifold $M=T_{A}^{3}\times T_{A}^{3}$. We will put two
different transverse Hermitian structures on $M$, and the cohomological
properties of the two transverse structures are different. In both cases we
have fixed the product metric.

\begin{enumerate}
\item First, we consider the product of the two transverse holomorphic
structures on each copy of $T_{A}^{3}$ separately. A simple calculation
shows that the foliation is transversely K\"{a}hler, nontaut. The mean
curvature is not automorphic, and the class $\left[ \partial _{B}\kappa
_{B}^{0,1}\right] $ on $M$ is nontrivial. The Betti numbers are%
\begin{eqnarray*}
h_{B}^{0} &=&1,~h_{B}^{1}=2,~h_{B}^{2}=1, \\
h_{B}^{0,0} &=&1,~h_{B}^{0,1}=2,~h_{B}^{0,2}=1, \\
h_{\partial _{B}\overline{\partial }_{B}}^{0,0} &=&1,~h_{\partial _{B}%
\overline{\partial }_{B}}^{1,1}=2,~h_{\partial _{B}\overline{\partial }%
_{B}}^{1,1}=1,
\end{eqnarray*}%
with all the other Betti numbers zero.

\item Next, instead we use the following transverse complex structure. Using
the same notation as in Example \ref{KaehlerExactMeanCurv} but using
subscripts $1$ and $2$ to refer to the different copies of $T_{A}^{3}$ in
the product, we define%
\begin{equation*}
J^{\prime }\left( U_{1}\right) =U_{2};~J^{\prime }\left( U_{2}\right)
=-U_{1},
\end{equation*}%
where $U$ denotes one of the unit normal vector fields $S$ or $T$. We then
have that the form $\omega $ is%
\begin{equation*}
\omega =\lambda ^{-t_{1}-t_{2}}dx_{2}\wedge dx_{1}+\lambda
^{t_{1}+t_{2}}ds_{2}\wedge ds_{1}+dt_{2}\wedge dt_{1},
\end{equation*}%
which is clearly not closed, so the new transverse Hermitian structure is
not K\"{a}hler. The foliation is the same as before, so it is again not
taut. The mean curvature is%
\begin{eqnarray*}
\kappa _{B} &=&\left( \log \lambda \right) \left( dt_{1}+dt_{2}\right) , \\
\kappa _{B}^{1,0} &=&\frac{1}{2}\left( \kappa _{B}+iJ^{\prime }\kappa
_{B}\right) =\frac{1}{2}\left( \log \lambda \right) \left( 1-i\right) \left(
dt_{1}+idt_{2}\right) .
\end{eqnarray*}%
This vector field is clearly holomorphic, and we also have%
\begin{equation*}
\overline{\partial }_{B}\kappa _{B}^{1,0}=d\kappa _{B}^{1,0}=0,
\end{equation*}%
so that with this new holomorphic structure, the $\partial _{B}\overline{%
\partial }_{B}$-class $\left[ \partial _{B}\kappa _{B}^{0,1}\right] $ is
trivial (even though the foliation is not taut). The Betti numbers now
satisfy%
\begin{eqnarray*}
h_{B}^{0} &=&1,~h_{B}^{1}=2,~h_{B}^{2}=1, \\
h_{B}^{0,0}
&=&1,~h_{B}^{0,1}=1=h_{B}^{1,0},~h_{B}^{1,1}=2,~h_{B}^{2,0}=h_{B}^{0,2}=1, \\
h_{\partial _{B}\overline{\partial }_{B}}^{0,0} &=&1,~h_{\partial _{B}%
\overline{\partial }_{B}}^{1,1}=1,
\end{eqnarray*}%
with all other Betti numbers zero.
\end{enumerate}

This set of examples shows that it is possible for the class $\left[
\partial _{B}\kappa _{B}^{0,1}\right] $ to be trivial for some transverse
holomorphic structures and to be nontrivial in others. But if this is the
case, by Corollary \ref{classTrivialImpliesAlvClassTrivialCor} it must be
nontrivial when the structure is transversely K\"{a}hler.
\end{example}


\begin{thebibliography}{99}
\bibitem{Had} H. Ait Haddou, \emph{Foliations and Lichnerowicz basic
cohomology}, Int. Math. Forum \textbf{2} (2007), no. 49-52, 2437-2446.

\bibitem{Al} J. A. \'{A}lvarez-L\'{o}pez, \emph{The basic component of the
mean curvature of Riemannian foliations}, Ann. Global Anal. Geom. \textbf{10}
(1992), 179--194.

\bibitem{Ba} A. Banyaga, \emph{Some properties of locally conformal
symplectic structures}, Comment. Math. Helv. \textbf{77} (2002), no. 2,
383-398.

\bibitem{BG} C. P. Boyer and K. Galicki, \emph{Sasakian geometry}. Oxford
Mathematical Monographs, Oxford University Press, Oxford, 2008.

\bibitem{BGN} C. P. Boyer, K. Galicki, and M. Nakamaye, \emph{On the
geometry of Sasakian-Einstein 5-manifolds}, Math. Ann. \textbf{325} (2003),
no. 3, 485-524.

\bibitem{CMNY} B. Cappelletti-Montano, A. De Nicola, and I. Yudin, \emph{%
Hard Lefschetz Theorem for Sasakian manifolds} J. Diff. Geom. \textbf{101}
(2015), 47-66.

\bibitem{Ca} Y. Carri\`{e}re, \emph{Flots riemanniens}, Ast\'{e}risque 
\textbf{116} (1984), 31--52.

\bibitem{ChdLM} D. Chinea, M. de Le\'on, M., and J. C. Marrero, \emph{%
Topology of cosymplectic manifolds}, J. Math. Pures Appl. (9) \textbf{72}
(1993), no. 6, 567-591.

\bibitem{CW} L. A. Cordero and R. A. Wolak, \textit{Properties of the basic
cohomology of transversely K\"{a}hler foliations}, Rendiconti del Circolo
Matematico di Palermo, Serie II, 40 (1991), 177-188.

\bibitem{Dom} D. Dom\'{\i}nguez, \emph{Finiteness and tenseness theorems for
Riemannian foliations}, Amer. J. Math. \textbf{120} (1998), no. 6, 1237-1276.

\bibitem{EK} A. El Kacimi-Alaoui, \emph{Op\'erateurs transversalement
elliptiques sur un feuilletage riemannien et applications}, Compositio Math. 
\textbf{73} (1990), no. 1, 57-106.

\bibitem{EKH} A. El Kacimi-Alaoui and G. Hector, \emph{D\'{e}composition de
Hodge basique pour un feuilletage riemannien}, Ann. Inst. Fourier \textbf{36}%
, 3(1986), 207-227.

\bibitem{EKN} A. El Kacimi-Alaoui, A.and M. Nicolau, \emph{On the
topological invariance of the basic cohomology}, Math. Ann. \textbf{295}
(1993), no. 4, 627-634.



\bibitem{GNT} O. Goertsches, H. Nozawa, and D. T\"oben, Dirk, 
\emph{Rigidity and vanishing of basic Dolbeault cohomology of
Sasakian manifolds}, J. Symplectic Geom. \textbf{14} (2016), no. 1, 31-70.

\bibitem{HR} G. Habib and K. Richardson, {Modified differentials and basic
cohomology for Riemannian foliations}, J. Geom. Anal. \textbf{23} (2013),
no. 3, 1314-1342.

\bibitem{HV} G. Habib and L. Vezzoni, \emph{Some remarks on Calabi-Yau and
hyper-K\"ahler foliations}, Differential Geom. Appl. \textbf{41} (2015),
12-32.

\bibitem{Heb} J. Hebda, \emph{Curvature and focal points in Riemannian
foliations}, Indiana Univ. Math. J. \textbf{35} (1986), no. 2, 321-331.

\bibitem{JU1} S. D. Jung, \textit{The first eigenvalue of the transversal
Dirac operator}, J. Geom. Phys. 39 (2001), 253-264.


\bibitem{JJ} S. D. Jung and M. J. Jung, \textit{Transversally holomorphic
maps between K\"ahler foliations}, J. Math. Anal. Appl. 416 (2014), 683-697.

\bibitem{JL} S. D. Jung and H. Liu, \textit{Transversal infinitesimal
automorphisms on K\"ahler foliations}, Bull. Aust. Math. Soc. 86 (2012),
405-415.

\bibitem{JP} S. D. Jung and J. S. Pak, \textit{Some vanishing theorems on
K\"ahler foliation}, Comm. Korean Math. Soc. 11 (1996), 767-781.

\bibitem{JP2} S. D. Jung and J. S. Pak, \textit{Some vanishing theorems on
complete K\"ahler foliation} Acta Math. Hungar. \textbf{77} (1997), no. 1-2,
15-28.

\bibitem{JR} S. D. Jung and K. Richardson, \textit{Transverse conformal
Killing forms and a Gallot-Meyer theorem for foliations}, Math. Z. 270
(2012), 337-350.

\bibitem{JR2} S. D. Jung and K. Richardson, \textit{Basic Dolbeault
cohomology and Weitzenb\"{o}ck formulas on transversely K\"{a}hler foliations%
}, preprint.


\bibitem{KT1} F. W. Kamber and Ph. Tondeur, \textit{Duality for Riemannian
foliations}, Proc. Sympos. Pure Math., Amer. Math. Soc. 40 (1983), Part I,
609-618.

\bibitem{KT2} F. W. Kamber and Ph. Tondeur, \textit{Duality theorems for
foliations}, Asterisque 116 (1984), 108-116.

\bibitem{KT3} F. W. Kamber and Ph. Tondeur, \textit{De Rham-Hodge theory for
Riemannian foliations}, Math. Ann. 277 (1987), 415-431.

\bibitem{KLW} Y. Kordyukov, M. Lejmi, and P. Weber, \emph{Seiberg-Witten
invariants on manifolds with Riemannian foliations of codimension 4}, J.
Geom. Phys. \textbf{107} (2016), 114-135.


\bibitem{MO} P. March, M. Min-Oo and E. A. Ruh, \textit{Mean curvature of
Riemannian foliations}, Canad. Math. Bull. 39 (1996), 95-105

\bibitem{MA} A. Mason, \textit{An application of stochastic flows to
Riemannian foliations}, Houston J. Math. 26 (2000), 481-515.

\bibitem{MR} M. Min-Oo, E. Ruh and Ph. Tondeur, \textit{Vanishing theorems
for the basic cohomology of Riemannian foliations}, J. Reine Angew. Math.
415 (1991), 167-174.

\bibitem{M11} A. Moroianu and L. Ornea, \textit{Eigenvalue estimates for the
Dirac operator and harmonic 1-forms of constant length}, C. R. Math. Acad.
Sci. Paris 338 (2004), 561-564.

\bibitem{NT} S. Nisikawa and Ph. Tondeur, \textit{Transversal infinitesimal
automorphisms for harmonic K\"ahler foliations}, Tohoku Math. J. 40 (1988),
599-611.

\bibitem{PaRi} E. Park and K. Richardson, \emph{The basic Laplacian of a
Riemannian foliation}, Amer. J. Math. \textbf{118} (1996), 1249--1275.

\bibitem{Rum} H. Rummler, \emph{Quelques notions simples en g\'{e}om\'{e}%
trie riemannienne et leurs applications aux feuilletages compacts}, Comm.
Math. Helv. \textbf{54} (1979), 224-239.

\bibitem{TO1} Ph. Tondeur, \textit{Foliations on Riemannian manifolds},
Springer-Verlag, New-York, 1988.

\bibitem{TO2} Ph. Tondeur, \textit{Geometry of foliations}, Birkh\"auser
Verlag, 1997.

\bibitem{Va} I. Vaisman, \emph{Remarkable operators and commutation formulas
on locally conformal K\"{\i}\textquestiondown 
$\frac12$%
hler manifolds}, Compositio Math. \textbf{40} (1980), no. 3, 287-299.

\bibitem{Wo} R. Wolak, \emph{Sasakian structures: a foliated approach},
arXiv:1605.04163 [math.DG].

\bibitem{YT} S. Yorozu and T. Tanemura, \textit{Green's theorem on a
foliated Riemannian manifold and its applications}, Acta Math. Hungarica 56
(1990), 239-245.
\end{thebibliography}
\end{document}